\theoremstyle{plain}
\newtheorem{theorem}{Theorem}
\newtheorem{corollary}[theorem]{Corollary}
\newtheorem{proposition}[theorem]{Proposition}
\newtheorem{lemma}[theorem]{Lemma}
\theoremstyle{definition}
\newtheorem{definition}[theorem]{Definition}
\newtheorem{example}[theorem]{Example}
\newtheorem{notation}[theorem]{Notation}
\newtheorem{assumption}[theorem]{Assumption}
\theoremstyle{remark}
\newtheorem{remark}[theorem]{Remark}
\numberwithin{theorem}{section}
\tikzstyle{vertex}=[circle, draw, inner sep=0pt, minimum size=6pt]
\tikzset{style green/.style={
		set fill color=green!50!lime!60,
		set border color=white,
	},
	style cyan/.style={
		set fill color=cyan!90!blue!60,
		set border color=white,
	},
	style orange/.style={
		set fill color=orange!30,
		set border color=white,
	},
	style blue/.style={
	set fill color=blue!20,
	set border color=blue!20,
	},
	hor/.style={
		above left offset={-0.100,0.31},
		below right offset={0.15,-0.125},
		#1
	},
	ver/.style={
		
	above left offset={-0.1,0.3},
		below right offset={0.15,-0.15},
		#1
	}
}
\title{Leavitt path algebras of weighted Cayley graphs $C_n(S,w)$}
\author{Mohan.R}
\keywords{Leavitt path algebra, Weighted Cayley graph}
\address{Statistics and Mathematics Unit, Indian Statistical Institute Bangalore, India}
\email{rmohan689@gmail.com}
\begin{document}

\begin{abstract}
For a postive integer $n$ and a subset $S$ of $\mathbb{Z}_n$, let $\left\langle S\right\rangle =\mathbb{Z}_n$, and $w:S\rightarrow\mathbb{N}$ be a function. The weighted Cayley graph of the cyclic group $\mathbb{Z}_n$ with respect to $S$ and $w$ is denoted by $C_n(S,w)$. We give an explicit description of the Grothendieck group of the Leavitt path algebras of $C_n(S,w)$. We also give description of Leavitt path algebras of $C_n(S,w)$ in some special cases. 
\end{abstract}
\maketitle
\section{Introduction}
For a finite group $G$ and a subset $S\subseteq G$, let the associated Cayley graph be denoted by $\mathsf{Cay}(G,S)$. When the given group is $\mathbb{Z}_n$ we write $C_n(S):=\mathsf{Cay}(\mathbb{Z}_n,S)$. Leavitt path algebras of Cayley graphs of the finite cyclic group $\mathbb{Z}_n$ with respect to the subset $S=\{1,n-1\}$ were initially studied in \cite{MR3307385}. It was shown that there are exactly four isomorphism classes represented by the collection $\{L(C_n(1,n-1))\mid n\in\mathbb{N}\}.$ 

In subsequent work, \cite{MR3456905} contains the computation of the important integers $|K_0(L(C_n(1,j)))|$ and $\det(I_n-A_{C_n(1,j)}^t)$, where $A_{(-)}$ denotes the adjacency matrix of a directed graph, and $K_0(-)$ denotes the Grothendieck group of a ring. Also in \cite{MR3456905} the collections of $K$-algebras were described upto isomorphism:
$$\{L(C_n(1,j))\mid n\in \mathbb{N}\}\quad \text{for}\quad j=0,1,2.$$
The descriptions of all these algebras follow from an application of the powerful tool known as the (Restricted) Algebraic Kirchberg-Philips Theorem.

In \cite{MR3854343} the study was extended and a method to compute the Grothendieck group of the Leavitt path algebra $L(C_n(1,j))$ to the case where $0\leq j\leq n-1$ and $n\geq3$ was derived.  Specifically it was shown how to reduce the computation of the Smith Normal Form of the $n\times n$ matrix $I_n-A_{C_n(1,j)}^t$ to that of calculating the Smith Normal Form of a $j\times j$ matrix $(M_j^n)^t-I_j$. Further a description of $K_0(L(C_n(1,j))$ was also given. 

In this paper we generalize the work done in \cite{MR3854343} to study
$L(C_n(S,w))$, where $S$ is any nonempty generating subset of $\mathbb{Z}_n$, $w:S\rightarrow\mathbb{N}$ is a map and $C_n(S,w)$ is the weighted Cayley graph. In section 2 we recall the background information required. In Section 3 we present a method to compute the Grothendieck group. Specifically we find the conditions to determine the sign of $\det(I_n-A_{C_n(S,w)}^t)$ and also the cardinality of $K_0(L(C_n(S,w)))$. Also we find a method to reduce the computation of the Smith Normal form of the $n\times n$ matrix $I_n-A_{C_n(S,w)}^t$ to that of calculating the Smith Normal form of a square matrix of smaller size if $0\notin S$ (Theorem \ref{Coker}). In Section 4 we use the method developed in Section 3 to study the following simple cases when $\left\langle S\right\rangle =\mathbb{Z}_n$:
\begin{enumerate}
	\item[Case 1]: $|S|=1$
	\item[Case 2]: $|S|=2$
	\item[Case 3]: $|S|=n$ 
\end{enumerate}
Moreover we recover the results studied in \cite{MR3307385},\cite{MR3456905}, and \cite{MR3854343} as special cases and get some new results. Among these new results, in particular, we show that $L(K_n)\cong L(1,n)$ where $K_n$ is the unweighted complete $n$-graph (See \ref{def:K_n} for definition) and $L(1,n)$ is the Leavitt algebra.  We also show that the main result of \cite{MR3307385}  holds true if $C_n(1,n-1)$ is replaced by $D_n$ for every $n\in\mathbb{N}$, where $D_n$ denotes the Cayley graph of Dihedral group with respect to usual generating set. 
\section{Preliminaries}

\begin{notation}
    Throughout by $K$ we mean a fixed field. $\mathbb{N}$ denotes the set of natural numbers, $\mathbb{Z}^+$ denotes the set of non-negative integers, $\mathbb{Z}$ denotes the set of integers and $\mathbb{Q}$ denotes the set of rationals. $|S|$ is the cardinality of the set $S$.
\end{notation}
\subsection{Leavitt path algebras and the Algebraic KP theorem}\hfill\\

A \textit{graph} $E=(E^0,E^1,s,r)$ consists of two sets $E^0,E^1$ and functions $s,r:E^1\rightarrow E^0$. The elements of $E^0$ are called vertices and the elements of $E^1$ are called edges. If $e$ is an edge, then $s(e)$ is called its source and $r(e)$ its range. $E$ is called finite if $E^0$ and $E^1$ are finite sets. 

A vertex $v$ is called a \textit{source} (resp. \textit{sink}) if $r^{-1}(v)=\emptyset$ (resp. if $s^{-1}(v)=\emptyset$). A graph is called \textit{sink-free} (resp. source-free) if it has no sinks (resp. no sources). A non-sink $v\in E^0$ is called \textit{regular} is $s^{-1}(v)$ is finite. ($v$ emits only finitely many edges). 

A \textit{path} $\mu$ in a graph $E$ is either a vertex in $E$ or a finite sequence $e_1e_2\dots e_n$ of edges in $E$ such that $r(e_i)=s(e_{i+1})$ for $1\leq i\leq n-1$. A path $\mu=e_1e_2\dots e_n$ for which $n\leq 1$ and $s(e_1)=r(e_n)=v$ is called a closed path based at $v$. A closed path $\mu=e_1e_2\dots e_n$ based at $v$ for which $s(e_i)\neq s(e_j)$ for any $i\neq j$ is called a \textit{cycle} (based at $v$). A graph which contains no cycles is called \textit{acyclic}.

Let $E=(E^0,E^1,s,r)$ be graph. The \textit{adjacency matrix} $A_E$ of $E$ is the $|E^0|\times|E^0|$ matrix whose entries are given by
$$A_E(v,w)=|\{e\in E^1\mid s(e)=v,~ r(e)=w\}|~\text{for any}~v,w\in E^0$$

\begin{definition}\label{def: cofinal and condition L}
	Let $E$ be a graph and $H\subseteq E^0$.
	\begin{enumerate}
		\item $H$ is \textit{hereditary} if whenever $v\in H$ and $w\in E^0$ for which there exists a path $\mu$ such that $s(\mu)=v$ and $r(\mu)=w$, then $w\in H$.
		\item $H$ is \textit{saturated} if whenever $v\in E^0$ is regular such that $\{r(e)\mid e\in E^1, s(e)=v\}\subseteq H$, then $v\in H$.
		\item $E$ satisfies \textit{condition (L)} if every cycle in $E$ has an exit.
	\end{enumerate}
\end{definition}
\begin{definition}\label{def:Leavitt path algebra}
	\label{LPA}
Let $K$ be a field and let $E=(E^0,E^1,r,s)$ be a graph. The \textit{Leavitt path algebra} of $E$ is the $K$-algebra presented by generators $E^0\sqcup E^1\sqcup \overline{E^1}$ where $\overline{E^1}:=\{e^\ast\mid e\in E^1\}$  and the following relations.
\begin{enumerate}
    \item[(V)] $\forall ~v,w\in E^0$, $vw=\delta_{vw}v$
    \item[(E)] $\forall~e\in E^1$, $s(e)e=e=er(e)$ and $r(e)e^\ast=e^ast=e^\ast s(e)$
    \item[(CK1)] $\forall~e,f\in E^1$, $e^\ast f=\delta_{ef}r(e)$
    \item[(CK2)] $\forall~v\in E^0$ if $0<|s^{-1}(v)|<\infty$ then $v=\sum\limits_{e\in s^{-1}(v)}ee^\ast$
\end{enumerate}
where $\delta_{ij}$ is Kronecker delta. We denote the Leavitt path algebra by $L(E)$ if underlying field $K$ is fixed.
\end{definition}

\begin{remark}\label{remark: LPA is unital}
	\label{unital}
	It is easy to see that $L(E)$ is unital if and only if $|E^0|$ is finite, in which case $\sum\limits_{v\in E^0}v$ acts as the unity. In this paper we focus only on finite graphs. 
\end{remark}

One of the motivations to define Leavitt path algebra is to study a natural abelian monoid associated to a given graph $E$ called the graph monoid $M_E$ which we define below.

\begin{definition}\label{graph monoid}
Let $E$ be a finite graph with vertex set $E^0$ and adjacency matrix $A_E=(a_{vw})$. The \textit{graph monoid} $M_E$ of $E$ is the abelian monoid presented by generating set $E^0$ and following relations:
$$v = \sum_{w\in E^0}a_{vw}w$$
\end{definition}

Recall that for a unital $K$-algebra $R$, the \textit{V-monoid of $R$}, denoted by $\mathcal{V}(R)$,  is the set of isomorphism classes of finitely generated projective left $R$-modules. We denote the elements of $\mathcal{V}(R)$ using brackets, for example, $[R]\in \mathcal{V}(R)$ represents the isomorphism class of the left regular module $\prescript{}{R}{R}$. Then $\mathcal{V}(R)$ is an abelian monoid, with operation $\oplus$, and zero element $[0]$, where $0$ is the zero $R$-module. Also, the moniod $(\mathcal{V}(R),\oplus)$ is conical; that is the sum of any two nonzero elements of $\mathcal{V}(R)$ is nonzero, or rephrased, $\mathcal{V}(R)^\ast=\mathcal{V}(R)-[0]$ is a semigroup under $\oplus$. The group completion of $\mathcal{V}(R)$ is denoted by $K_0(R)$ and called the \textit{Grothendieck group} of $R$.

\begin{theorem}\cite[Theorem 3.5]{MR2310414}\label{theorem: Vmoniod is graph monoid}
	As monoids, $\mathcal{V}(L(E))\cong M_E$ and $[L(E)] \leftrightarrow \sum\limits_{v\in E^0} [v] $ under this isomorphism.
\end{theorem}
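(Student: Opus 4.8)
The plan is to write down the isomorphism explicitly in the direction $M_E\to\mathcal V(L(E))$ and then recover its inverse from a universal-property computation of the $\mathcal V$-monoid. First I would define $\varphi\colon M_E\to\mathcal V(L(E))$ on generators by $\varphi(v)=[L(E)v]$, where each vertex $v$ is regarded as an idempotent of $L(E)$ via relation (V). To check that $\varphi$ respects the defining relations of $M_E$, the central observation is that for every edge $e$ the element $ee^\ast$ is idempotent and that right multiplication by $e$ and by $e^\ast$ defines mutually inverse left-module homomorphisms between $L(E)ee^\ast$ and $L(E)r(e)$: by (CK1) we have $e^\ast e=r(e)$, so the composite $L(E)r(e)\to L(E)ee^\ast\to L(E)r(e)$ is multiplication by $r(e)$, hence the identity, while the reverse composite is multiplication by $ee^\ast$, the identity on $L(E)ee^\ast$. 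Hence $[L(E)ee^\ast]=[r(e)]$ in $\mathcal V(L(E))$. Relation (CK1) also gives $(ee^\ast)(ff^\ast)=\delta_{ef}\,ee^\ast$, so the idempotents $\{ee^\ast : e\in s^{-1}(v)\}$ are pairwise orthogonal, and (CK2) at a regular vertex $v$ yields $L(E)v=\bigoplus_{e\in s^{-1}(v)}L(E)ee^\ast$. Taking classes gives $[v]=\sum_{e\in s^{-1}(v)}[r(e)]=\sum_{w\in E^0}a_{vw}[w]$, which is exactly the relation imposed in $M_E$ (imposed at regular vertices; in the Cayley-graph setting every vertex is regular). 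Thus $\varphi$ is a well-defined monoid homomorphism.

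Next I would settle the distinguished element. Because $E^0$ is finite, relation (V) makes $\{v:v\in E^0\}$ a complete set of orthogonal idempotents with $\sum_{v\in E^0}v=1$, so $L(E)=\bigoplus_{v\in E^0}L(E)v$ as left modules and therefore $[L(E)]=\sum_{v\in E^0}[L(E)v]=\varphi\big(\sum_{v\in E^0}v\big)$. This already proves the claimed correspondence $[L(E)]\leftrightarrow\sum_{v}[v]$, conditional only on $\varphi$ being an isomorphism.

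The main obstacle is to prove that $\varphi$ is bijective, and here I would compute $\mathcal V(L(E))$ from the ground up rather than try to invert $\varphi$ by hand. The strategy is to build $L(E)$ by a finite sequence of universal constructions and track the effect on the $\mathcal V$-monoid using Bergman's theorems on coproducts and universal adjunctions: begin with the commutative algebra $K^{E^0}$, whose $\mathcal V$-monoid is the free abelian monoid on $E^0$; for each edge $e$ adjoin a universal partial isometry realizing the projective associated to $r(e)$ as a direct summand of the one associated to $s(e)$ (this produces the Cohn path algebra); and finally impose (CK2), which in this framework forces precisely the identifications $v=\sum_{e\in s^{-1}(v)}ee^\ast$. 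Bergman's results say exactly how $\mathcal V$ is altered by each such step, adjoining a relation $[P]=[Q]$ passing to the corresponding monoid pushout, so the cumulative presentation of $\mathcal V(L(E))$ has generators $E^0$ and relations $v=\sum_w a_{vw}w$, i.e. it is $M_E$. Comparing this presentation with the construction of $\varphi$ shows that $\varphi$ is the asserted isomorphism, and surjectivity (every finitely generated projective is a direct sum of the $L(E)v$ up to stable equivalence) falls out of the same computation.

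The delicate points I would expect to spend the most care on are verifying the hypotheses of Bergman's theorems at each stage, namely that the modules being glued are finitely generated projective and that the adjoined maps are the universal ones, and checking that the order in which the edges are processed does not affect the colimit; both are controlled by the functoriality of the universal constructions. For a finite graph $E$ these amount to a finite sequence of monoid pushouts, so no direct-limit subtleties arise. An alternative to the Bergman computation would be to construct the inverse $\mathcal V(L(E))\to M_E$ directly as an $M_E$-valued rank on idempotent matrices over $L(E)$ and verify its well-definedness on the relations; this is logically equivalent but, in my experience, the bookkeeping is no easier than the universal-algebra route.
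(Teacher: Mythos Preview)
The paper does not give its own proof of this statement: it is quoted verbatim as \cite[Theorem 3.5]{MR2310414} and used as a black box, so there is no in-paper argument to compare against. Your outline is correct and is essentially the strategy carried out in the cited reference of Ara--Moreno--Pardo: define $\varphi$ on vertex idempotents, use (CK1)--(CK2) to see that the graph-monoid relations hold among the $[L(E)v]$, and then establish bijectivity by building $L(E)$ through successive Bergman-type universal localizations and tracking $\mathcal V$ at each step. The identification $[L(E)]\leftrightarrow\sum_{v}[v]$ via the orthogonal decomposition $1=\sum_v v$ is exactly as you describe.
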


\begin{definition}\label{def: purely infinite simple}
	A unital $K$-algebra $A$ is called \textit{purely infinite simple} in case $A$ is not a division ring, and $A$ has the property that for every nonzero element $x$ of $A$ there exists $b,c \in A$ for which $bxc=1_A$.
\end{definition}

The finite graphs $E$ for which the Leavitt path algebra $L(E)$ is purely infinite simple have been explicitly  described in \cite{MR2265539}: 

\begin{theorem}\label{theorem: LPA is pis iff}
$L(E)$ is purely infinite simple if and only if $E$ is sink-free, satisfies Condition ($L$), and only hereditary and saturated subsets of $E^0$ are $\emptyset$ and $E^0$.
\end{theorem}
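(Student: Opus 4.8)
The plan is to factor the biconditional through the intermediate notion of \emph{simplicity}, for which a clean graph-theoretic criterion is available, and then to isolate the single extra hypothesis that upgrades simplicity to pure infiniteness. So I would first establish the simplicity criterion: for a finite graph $E$, the algebra $L(E)$ is simple if and only if $E$ satisfies Condition (L) and the only hereditary saturated subsets of $E^0$ are $\emptyset$ and $E^0$. The engine here is the order-preserving map $H\mapsto I(H)$ sending a hereditary saturated set to the ideal it generates; I would show this is a lattice bijection onto the \emph{graded} ideals, so that absence of nontrivial hereditary saturated sets is equivalent to absence of nontrivial graded ideals. Condition (L) then enters exactly at the passage from graded ideals to arbitrary ideals: via a reduction (Cuntz--Krieger uniqueness) argument, Condition (L) forces every nonzero ideal to contain a vertex and hence to be graded, so that under both hypotheses no nontrivial two-sided ideal survives. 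Granting this, the theorem reduces to the claim that a simple $L(E)$ (with $E$ finite) is purely infinite simple precisely when $E$ is sink-free.

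$(\Rightarrow)$ If $L(E)$ is purely infinite simple it is in particular simple, so Condition (L) and triviality of the hereditary saturated subsets follow from the criterion; it remains only to exclude sinks. If some $v\in E^0$ were a sink, then since $v$ emits no edges the defining relations give $vL(E)v=Kv\cong K$, a corner that is a division ring. But a nonzero idempotent corner $eRe$ of a purely infinite simple ring $R$ is again purely infinite simple, hence never a division ring --- a contradiction. Therefore $E$ is sink-free.

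$(\Leftarrow)$ Conversely, assume the three conditions. Simplicity is immediate from the criterion, so it suffices to check that $L(E)$ is not a division ring and that every nonzero $x\in L(E)$ admits $b,c$ with $bxc=1$. Because $E$ is finite and sink-free, following edges out of any vertex must eventually repeat a vertex, so $E$ contains a cycle; Condition (L) supplies this cycle with an exit, and triviality of the hereditary saturated subsets (equivalently, cofinality in the finite sink-free setting) guarantees every vertex connects to it. From the cycle-with-exit I would build an infinite idempotent --- a vertex Murray--von Neumann equivalent to a proper subidempotent of itself --- which already shows $L(E)$ is not a division ring. To finish, I would invoke the reduction theorem once more: Condition (L) lets me multiply any nonzero $x$ down to $\mu^\ast x\nu=k\,w$ for a path-monomial pair $\mu,\nu$, a vertex $w$, and $0\neq k\in K$; cofinality carries $w$ to the infinite idempotent, and infiniteness together with simplicity then yields the desired $bxc=1$.

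The main obstacle is the purely-infinite half of $(\Leftarrow)$: converting the merely combinatorial presence of a cycle with an exit into the algebraic statement $bxc=1$ for an \emph{arbitrary} nonzero $x$. Its two technical cores are the reduction theorem (that Condition (L) sharpens any nonzero element to a scalar multiple of a vertex) and the construction of an infinite idempotent from a cycle with an exit, and the real work is in stitching these together through cofinality to connect an arbitrary vertex to the cycle. An alternative that sidesteps the hands-on idempotent bookkeeping is to argue monoid-theoretically: by Theorem \ref{theorem: Vmoniod is graph monoid} one has $\mathcal{V}(L(E))\cong M_E$, the existence of a cycle renders a generator of $M_E$ properly infinite, and cofinality makes $M_E$ order-simple, so that purely infinite simplicity can be read directly off $M_E$ --- at the cost of importing the monoid-level characterization of pure infiniteness.
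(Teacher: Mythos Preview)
The paper does not prove this theorem at all: it is quoted as a known result with the preface ``The finite graphs $E$ for which the Leavitt path algebra $L(E)$ is purely infinite simple have been explicitly described in \cite{MR2265539}'' and no argument is given. Your proposal is therefore not competing with anything in the paper itself; it is a sketch of the proof from the cited reference. The outline you give --- establishing the simplicity criterion via the lattice isomorphism $H\mapsto I(H)$ together with Condition~(L) and the reduction/uniqueness theorem, then isolating sink-freeness as the extra condition that produces a cycle with an exit and hence an infinite idempotent --- is indeed the strategy of Abrams and Aranda~Pino, and the corner argument $vL(E)v\cong K$ for a sink $v$ is the standard way to rule out sinks in the forward direction. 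So your proposal is correct and aligned with the literature, but for the purposes of this paper a citation is all that is expected.
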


In other words, the graph $E$ satisfies the following properties: every vertex in $E$ connects to every cycle of $E$; every cycle in $E$ has an exit; and $E$ contains at least one cycle.

It is shown in \cite[Corollary 2.2]{MR1918211}, that if $A$ is a unital purely infinite simple $K$-algebra, then the semigroup $(\mathcal{V}(A)^\ast,\oplus)$ is in fact a group, and moreover, that $\mathcal{V}(A)^\ast \cong K_0(A)$, the Grothendieck group of $A$. For unital Leavitt path algebras, the converse is true as well: if $\mathcal{V}(L(E))^\ast$ is a group, then $L(E)$ is purely infinite simple. (This converse is not true for general $K$-algebras.)  

\begin{theorem}\label{theorem: pis implies K0 isomorphic to M*}
If $L(E)$ is unital purely infinite simple then 
$$K_0(L(E))\cong \mathcal{V}(L(E))^\ast \cong M_E^\ast.$$

\end{theorem}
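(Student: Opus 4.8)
The plan is to assemble the statement from the two results already on the table, namely the cited Corollary 2.2 of \cite{MR1918211} and Theorem \ref{theorem: Vmoniod is graph monoid}. First I would observe that, since we work throughout with finite graphs, Remark \ref{remark: LPA is unital} guarantees that $L(E)$ is unital; hence the hypothesis of the cited corollary is met, because $L(E)$ is by assumption a unital purely infinite simple $K$-algebra. That corollary then yields directly that $(\mathcal{V}(L(E))^\ast,\oplus)$ is in fact a group and that $K_0(L(E))\cong\mathcal{V}(L(E))^\ast$, which establishes the first of the two claimed isomorphisms.

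For the second isomorphism I would invoke Theorem \ref{theorem: Vmoniod is graph monoid}, which supplies a monoid isomorphism $\varphi\colon\mathcal{V}(L(E))\xrightarrow{\ \sim\ }M_E$. The one point requiring (minor) care is that $\varphi$ restricts to the nonzero parts. Since $\varphi$ is a homomorphism of monoids, it must send the neutral element $[0]\in\mathcal{V}(L(E))$ (the class of the zero module, which is the identity for $\oplus$) to the neutral element $0\in M_E$; being a bijection, it therefore carries $\mathcal{V}(L(E))-\{[0]\}$ bijectively onto $M_E-\{0\}$, that is, $\mathcal{V}(L(E))^\ast\cong M_E^\ast$ as semigroups.

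Finally I would note that the binary operation transported across $\varphi$ coincides with the group operation obtained in the first step, so $M_E^\ast$ inherits a group structure and the isomorphism $\mathcal{V}(L(E))^\ast\cong M_E^\ast$ is in fact one of groups. Chaining the two isomorphisms gives $K_0(L(E))\cong\mathcal{V}(L(E))^\ast\cong M_E^\ast$, as desired. I expect no genuine obstacle here: the substantive content is entirely contained in the two invoked theorems, and the only thing to verify by hand is the elementary fact that a monoid isomorphism preserves the neutral element and hence restricts to the starred parts.
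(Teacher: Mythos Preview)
Your proposal is correct and matches the paper's approach exactly: the paper does not give a separate proof of this theorem but rather states it as the immediate consequence of the preceding paragraph, which combines \cite[Corollary 2.2]{MR1918211} (giving $K_0(L(E))\cong\mathcal{V}(L(E))^\ast$) with Theorem~\ref{theorem: Vmoniod is graph monoid} (giving $\mathcal{V}(L(E))\cong M_E$). Your added observation that the monoid isomorphism restricts to the starred parts is the only detail the paper leaves implicit, and your justification for it is fine.
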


The following important theorem will be used to yield a number of key results in the following sections.

\begin{theorem}[\textbf{(Restricted) Algebraic KP Theorem}]\cite[Corollary 2.7]{MR2785945}\label{theorem: KP}\\
	Suppose $E$ and $F$ are finite graphs for which the Leavitt path algebras $L(E)$ and $L(F)$ are purely infinite simple. Suppose that there is an isomorphism $\varphi : K_0(L(E)) \rightarrow K_0(L(F))$ for which $\varphi([L(E)])=[L(F)]$, and suppose also that the two integers $\emph{det}(I_{|E^0|}-A_E^t)$ and $\emph{det}(I_{|F^0|}-A_F^t)$ have the same sign. Then $L(E)\cong L(F)$ as $K$-algebras.
\end{theorem}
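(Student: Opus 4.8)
Since this statement is quoted from the literature, no proof is given here; I nonetheless sketch the route I would take, which is through symbolic dynamics. The plan is to recognise the two hypotheses --- an order-unit preserving isomorphism $\varphi\colon K_0(L(E))\to K_0(L(F))$ together with the agreement of the signs of $\det(I_{|E^0|}-A_E^t)$ and $\det(I_{|F^0|}-A_F^t)$ --- as exactly the flow-equivalence invariants of the edge shifts of $E$ and $F$. First I would make the $K_0$-data concrete: by Theorems \ref{theorem: Vmoniod is graph monoid} and \ref{theorem: pis implies K0 isomorphic to M*}, purely infinite simplicity gives $K_0(L(E))\cong M_E^{\ast}$, and the monoid relations $v=\sum_{w}a_{vw}w$ identify this group with the cokernel $\operatorname{coker}(I_{|E^0|}-A_E^t)$ (the Bowen--Franks group of $A_E$), under which the order unit $[L(E)]=\sum_v[v]$ is the image of $(1,\dots,1)$. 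Moreover, by Theorem \ref{theorem: LPA is pis iff} the graph $E$ is sink-free, satisfies Condition (L), and has only trivial hereditary saturated subsets, so $A_E$ is irreducible and not a permutation matrix; that is, the edge shift of $E$ is a nontrivial irreducible shift of finite type, and likewise for $F$.

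Next I would invoke Franks' classification of flow equivalence: two nontrivial irreducible shifts of finite type are flow equivalent if and only if their Bowen--Franks groups are isomorphic and the signs of the corresponding determinants agree. Both conditions hold by hypothesis (note $\det(I-A^t)=\det(I-A)$ and $\operatorname{coker}(I-A^t)\cong\operatorname{coker}(I-A)$, so the transpose convention is immaterial), so the edge shifts of $E$ and $F$ are flow equivalent. The engine of the argument is then the dictionary between flow equivalence and Leavitt path algebras: flow equivalence is generated by a finite sequence of elementary graph moves (in- and out-splittings and symbol expansions of Parry--Sullivan type), and each of these is known to induce a Morita equivalence of the associated Leavitt path algebras over $K$. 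Composing them yields that $L(E)$ and $L(F)$ are Morita equivalent.

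It remains to upgrade Morita equivalence to a genuine $K$-algebra isomorphism, and this is where the order-unit hypothesis does its work. Any algebra Morita equivalent to $L(E)$ has the form $\operatorname{End}_{L(E)}(P)$ for a finitely generated projective generator $P$, and under the induced isomorphism of $K_0$-groups the order unit of this algebra corresponds to $[P]$. Because $L(E)$ is purely infinite simple, Theorem \ref{theorem: pis implies K0 isomorphic to M*} shows $\mathcal{V}(L(E))^\ast\cong K_0(L(E))$, so a projective module is determined up to isomorphism by its $K_0$-class; matching the unit classes (after composing with an algebra automorphism realizing the $K_0$-automorphism needed to align with the given $\varphi$) forces $[P]=[L(E)]$, hence $P\cong L(E)$ and $\operatorname{End}_{L(E)}(P)\cong L(E)$. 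Tracking this through gives $L(E)\cong L(F)$.

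The main obstacle is the middle step: proving rigorously, over an arbitrary field $K$, that the elementary moves realising flow equivalence lift to Morita equivalences of Leavitt path algebras, and controlling the induced maps on $K_0$. The conceptual point to keep in mind is that the group $K_0$ alone (the Bowen--Franks group) is insensitive to the sign of $\det(I-A^t)$, since that sign is lost in passing to Smith normal form; that sign is precisely the extra flow-equivalence datum needed to invoke Franks' theorem, which is why it must be added as a hypothesis even though $K_0$ is itself a Morita invariant. Establishing this passage --- the dynamical invariant matching the $(K_0,[1])$-plus-sign invariant on the algebra side --- is the crux, and is the content imported from the cited work.
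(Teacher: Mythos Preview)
The paper does not prove this theorem; it is quoted from \cite{MR2785945}, and your sketch does follow the architecture of that source: pass from the $(K_0,[1],\text{sign})$ data to Bowen--Franks invariants, invoke Franks' flow-equivalence classification for nontrivial irreducible shifts of finite type, realise flow equivalence by elementary graph moves, and compute the effect of those moves on the Leavitt path algebras.

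There is, however, a genuine gap in your upgrade from Morita equivalence to isomorphism. You propose to compose with ``an algebra automorphism realizing the $K_0$-automorphism needed to align with the given $\varphi$''. But any unital $K$-algebra automorphism of $L(E)$ sends $1$ to $1$ and therefore fixes $[L(E)]$ in $K_0(L(E))$; it can never realise a $K_0$-automorphism that moves $[L(E)]$. So if the flow-equivalence Morita equivalence induces $\mu\colon K_0(L(E))\to K_0(L(F))$ with $\mu^{-1}([L(F)])=[P]\neq[L(E)]$, no algebra automorphism will repair this: the existence of the hypothesised $\varphi$ only places $[P]$ and $[L(E)]$ in the same $\operatorname{Aut}(K_0(L(E)))$-orbit, which is strictly weaker than equality. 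The cited paper avoids this trap by building the order-unit control into the flow-equivalence step itself, using Huang's refinement of Franks' theorem, which classifies flow equivalences together with the image of a prescribed positive vector (here the all-ones vector representing $\sum_v[v]$). With that refinement one chooses the chain of elementary moves so that the induced $K_0$-isomorphism already matches order units, and the moves then assemble directly into an isomorphism $L(E)\cong L(F)$ with no after-the-fact adjustment. Your closing paragraph correctly flags ``controlling the induced maps on $K_0$'' as the crux; the point is that this control must be imposed during the dynamical step, not afterward on the algebra side.
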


\begin{example}[Leavitt algebras]
For any integer $m\geq 2$, $L(1,m)$ is the free associative $K$-algebra in $2m$ generators $x_1, x_2, \dots, x_m,$ $y_1, y_2, \dots, y_m$, subject to the relations $$y_ix_j=\delta_{i,j}1_K\quad \text{and}\quad  \sum\limits_{i=1}^mx_iy_i=1_K.$$
These algebras were first defined and investigated in \cite{MR0132764} in the context of finding counter-examples for the invariant basis number problem, and formed the motivating examples for the more general notion of Leavitt path algebra. It is easy to see that for $m>2$, if $R_m$ is the graph having one vertex and $m$ loops, then $L(R_m)\cong L(1,m)$. From Theorem \ref{theorem: LPA is pis iff} it follows that $L(R_m)$ is unital purely infinite simple and hence $K_0(L(R_m))\cong M_{R_m}^\ast$ is the cyclic group $\mathbb{Z}_{m-1}$, where the regular module $[L(R_m)]$ in $K_0(L(R_m))$ corresponds to 1 in $\mathbb{Z}_{m-1}$.    
\end{example}

Unital purely infinite simple Leavitt path algebras $L(E)$ whose corresponding $K_0$ groups are cyclic and for which det($I_{|E^0|}-A_E^t)\leq 0$ are relatively well-understood, and arise as matrix rings over the Leavitt algebras $L(1,m)$, as follows. Let $d\geq 2$, and consider the graph $R_m^d$ having two vertices $v_1, v_2; d-1$ edges from $v_1$ to $v_2$; and $m$ loops at $v_2$.

\begin{figure}[h]
	\center
	\begin{tikzpicture}
	[->,>=stealth',shorten >=1pt,thick]
	\draw [fill=black] (0,0) circle [radius=0.1];
	\draw [fill=black] (2,0) circle [radius=0.1];

	%\draw[->] (0,0.2) to [out=50,in=120] (1,0.2);
	%\draw[->] (0,-0.2) to [out=310,in=240] (1,-0.2);
	\node at (0,-0.5) {$v_1$};
	\node at (2,-0.5) {$v_2$};
	
	\node at (1,0.5) {$(d-1)$};
	\node at (3.6,0) {$(m)$};	
	\node at (-1,0) {$R_m^d=$};

	\draw[->] (0.2,0) -- (1.8,0);
	
	\path[->] (2.3,0) edge [out=40, in=320, looseness=4, bend right, distance=2cm, ->] node {} (2.3,0);

	\end{tikzpicture}
\end{figure}
It is shown in \cite{MR2437640} that $L(R_m^d)$ is isomorphic to the matrix algebra $M_d(L(1,m))$. By standard Morita equivalence theory we have that $K_0(M_d(L(1,m)))\cong K_0(L(1,m))$. Moreover, the element $[M_d(L(1,m))]$ of $K_0(M_d(L(1,m)))$ corresponds to the element $d$ in $\mathbb{Z}_{m-1}$. In particular, the element $[M_{m-1}(L(1,m))]$ of $K_0(M_{m-1}(L(1,m)))$ corresponds to $m-1 \equiv 0$ in $\mathbb{Z}_{m-1}$. Finally, an easy computation yields that det$(I_2-A_{R_m^d}^t)=-(m-1)\leq 0$ for all $m,d$. Therefore, by invoking the Algebraic KP Theorem, the previous discussion immediately yields the following.

\begin{proposition}\label{prop: Leavitt algebra}
	Suppose that $E$ is a graph for which $L(E)$ is unital purely infinite simple. Suppose that $M_E^*$ is isomorphic to the cyclic group $\mathbb{Z}_{m-1}$, via an isomorphism which takes the element $\sum_{v\in E^0}[v]$ of $M_E^*$ to the element $d$ of $\mathbb{Z}_{m-1}$. Finally, suppose that det$(I_{|E^0|}-A_E^t)\leq0$. Then $L(E)\cong M_d(L(1,m))$.
\end{proposition}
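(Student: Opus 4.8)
The plan is to apply the (Restricted) Algebraic KP Theorem (Theorem \ref{theorem: KP}) with the comparison graph $F = R_m^d$, thereby reducing the claim to the isomorphism $L(R_m^d) \cong M_d(L(1,m))$ already recorded above. Concretely, once I produce an isomorphism $\varphi : K_0(L(E)) \to K_0(L(R_m^d))$ that sends the class $[L(E)]$ of the regular module to $[L(R_m^d)]$, and once I check that the two relevant determinants share a sign, Theorem \ref{theorem: KP} yields $L(E) \cong L(R_m^d)$, and composing with $L(R_m^d)\cong M_d(L(1,m))$ finishes the proof.

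First I would assemble the $K_0$-isomorphism from identifications already in hand. Since $L(E)$ is unital purely infinite simple, Theorem \ref{theorem: pis implies K0 isomorphic to M*} gives $K_0(L(E)) \cong M_E^*$, and under the monoid isomorphism of Theorem \ref{theorem: Vmoniod is graph monoid} the distinguished class $[L(E)]$ corresponds to $\sum_{v \in E^0}[v]$. By hypothesis $M_E^* \cong \mathbb{Z}_{m-1}$ via a map carrying $\sum_{v\in E^0}[v]$ to $d$; composing, I obtain $K_0(L(E)) \cong \mathbb{Z}_{m-1}$ with $[L(E)] \mapsto d$. On the other side, the discussion preceding the statement records that $L(R_m^d)$ is unital purely infinite simple with $K_0(L(R_m^d)) \cong \mathbb{Z}_{m-1}$ and $[L(R_m^d)] = [M_d(L(1,m))] \mapsto d$. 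Inverting this second identification and composing with the first produces the desired $\varphi : K_0(L(E)) \to K_0(L(R_m^d))$, which by construction matches $d \leftrightarrow d$, that is, $\varphi([L(E)]) = [L(R_m^d)]$.

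Next I would verify the remaining hypotheses of Theorem \ref{theorem: KP}. Both $L(E)$ and $L(R_m^d)$ are unital purely infinite simple: the former by assumption, the latter either by the cited result or by checking the criterion of Theorem \ref{theorem: LPA is pis iff} directly on $R_m^d$ (it is sink-free, the loops at $v_2$ give a cycle with an exit when $m \geq 2$, and the only hereditary saturated subsets of its vertex set are $\emptyset$ and the whole set). For the sign condition, the hypothesis gives $\det(I_{|E^0|} - A_E^t) \leq 0$, while the explicit computation above gives $\det(I_2 - A_{R_m^d}^t) = -(m-1) \leq 0$, so the two integers have the same sign. Applying Theorem \ref{theorem: KP} then gives $L(E) \cong L(R_m^d) \cong M_d(L(1,m))$.

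The argument is essentially an assembly of the preceding results, so I do not anticipate a deep obstacle; the one point demanding care is the bookkeeping in the second step. One must confirm that the composite $\varphi$ genuinely carries the class of the regular module $[L(E)]$ to $[L(R_m^d)]$, and not merely that the two $K_0$ groups are abstractly isomorphic, since it is exactly this compatibility with the distinguished generator that the KP Theorem demands. The careful phrasing of the hypothesis (the isomorphism takes $\sum_{v\in E^0}[v]$ to $d$) is precisely what makes this matching automatic, because $[M_d(L(1,m))]$ likewise corresponds to $d$ in $\mathbb{Z}_{m-1}$.
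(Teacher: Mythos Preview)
Your proposal is correct and follows essentially the same route as the paper: the paper's proof is simply the sentence ``Therefore, by invoking the Algebraic KP Theorem, the previous discussion immediately yields the following,'' and the preceding discussion establishes exactly the ingredients you list (that $L(R_m^d)\cong M_d(L(1,m))$ is purely infinite simple with $K_0\cong\mathbb{Z}_{m-1}$, that $[L(R_m^d)]\mapsto d$, and that $\det(I_2-A_{R_m^d}^t)=-(m-1)\le 0$). Your write-up actually supplies more detail than the paper on tracking the distinguished element through the isomorphisms, which is appropriate.
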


\subsubsection{\textbf{Computation of Grothendieck group}}\hfill\\

Let $E$ be a finite directed graph for which $|E^0|=n$. We view $I_n-A_E^t$ both as a matrix, and as a linear transformation $I_n-A_E^t:\mathbb{Z}^n\rightarrow\mathbb{Z}^n$, via left multiplication (viewing elements of $\mathbb{Z}^n$ as column vectors). As discussed in \cite[Section 3]{MR2437640}, we have

\begin{proposition}\label{prop: K0 is cokernal}
If $L(E)$ is purely infinite simple, then
$$M_E^\ast \cong K_0(L(E))\cong \mathbb{Z}^n/\emph{Im}(I_n-A_E^t)=\emph{Coker}(I_n-A_E^t).$$
Under this isomorphism $[v_i]\mapsto \vec{b_i}+\emph{Im}(I_n-A_E^t)$, where $\vec{b_i}$ is the element of $\mathbb{Z}^n$ which is 1 in the $i^{th}$ coordinate and 0 elsewhere.
\end{proposition}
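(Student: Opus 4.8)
The plan is to chain together the monoid-level isomorphism already recorded in Theorem \ref{theorem: Vmoniod is graph monoid} with an explicit computation of the group completion of the graph monoid $M_E$. The only genuinely new content beyond the cited results is the identification of this group completion with $\mathrm{Coker}(I_n - A_E^t)$, so I would spend most of the effort there.

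First I would dispose of the leftmost isomorphism: since $L(E)$ is unital purely infinite simple, Theorem \ref{theorem: pis implies K0 isomorphic to M*} gives directly that $M_E^\ast \cong K_0(L(E))$. It then remains to show $K_0(L(E)) \cong \mathrm{Coker}(I_n - A_E^t)$. Here I would invoke that $K_0(R)$ is by definition the group completion of the monoid $\mathcal{V}(R)$; combined with the monoid isomorphism $\mathcal{V}(L(E)) \cong M_E$ of Theorem \ref{theorem: Vmoniod is graph monoid} and the functoriality of group completion, this yields $K_0(L(E)) \cong \mathrm{Grp}(M_E)$, the group completion of $M_E$.

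The heart of the argument is then to compute $\mathrm{Grp}(M_E)$ from the presentation in Definition \ref{graph monoid}. By the universal property, the group completion of a commutative monoid presented by generators and relations is the abelian group on the same generators and relations. Thus, writing $E^0 = \{v_1, \dots, v_n\}$ and identifying $v_i$ with the standard basis vector $\vec{b_i}$ of $\mathbb{Z}^n$, the defining relation $v_i = \sum_{j} a_{ij} v_j$ becomes $\vec{b_i} - \sum_j a_{ij}\vec{b_j} = 0$. I would check that the $k$-th coordinate of the relation vector $\vec{b_i} - \sum_j a_{ij}\vec{b_j}$ equals $\delta_{ik} - a_{ik} = (I_n - A_E)_{ik}$, so that this relation vector is precisely the $i$-th column of $(I_n - A_E)^t = I_n - A_E^t$. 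Hence the subgroup of relations is exactly $\mathrm{Im}(I_n - A_E^t)$, and $\mathrm{Grp}(M_E) = \mathbb{Z}^n/\mathrm{Im}(I_n - A_E^t) = \mathrm{Coker}(I_n - A_E^t)$; by construction the class $[v_i]$ maps to $\vec{b_i} + \mathrm{Im}(I_n - A_E^t)$, giving the stated formula.

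The main obstacle I anticipate is bookkeeping rather than conceptual: one must track the transpose carefully so that the row-relations of $I_n - A_E$ become the columns (image) of $I_n - A_E^t$, and one must be careful about the distinction between the semigroup-turned-group $M_E^\ast$ (whose identity is not the monoid zero $[0]$) and the genuine group completion $\mathrm{Grp}(M_E)$. I would reconcile these by noting that in the purely infinite simple case Theorem \ref{theorem: pis implies K0 isomorphic to M*} already identifies $M_E^\ast$ with $K_0(L(E))$, so both routes land on the same group and no separate comparison of identity elements is needed.
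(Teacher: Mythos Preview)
Your argument is correct and is essentially the standard one: chain $M_E^\ast \cong K_0(L(E))$ from Theorem~\ref{theorem: pis implies K0 isomorphic to M*}, identify $K_0(L(E))$ with the group completion of $M_E$ via Theorem~\ref{theorem: Vmoniod is graph monoid}, and then read off that group completion from the monoid presentation. Your transpose bookkeeping is right: the $i$-th relation vector is the $i$-th row of $I_n-A_E$, hence the $i$-th column of $I_n-A_E^t$, so the relation subgroup is exactly $\mathrm{Im}(I_n-A_E^t)$.

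There is no ``paper's proof'' to compare against: the paper states this proposition as a known fact imported from \cite[Section 3]{MR2437640} and gives no argument of its own. What you have written is a clean self-contained justification of that cited result in the finite, sink-free setting (which is guaranteed here by the purely infinite simple hypothesis via Theorem~\ref{theorem: LPA is pis iff}, so every vertex really does contribute a relation). If anything, your write-up is more informative than what appears in the paper.
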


%Smith Normal form:
Let $M\in M_n(\mathbb{Z})$ and view $M$ as a linear transformation $M:\mathbb{Z}^n\rightarrow\mathbb{Z}^n$ via left multiplication on columns. The cokernel of $M$ is a finitely generated abelian group, having at most $n$ summands; as such, by the invariant factors version of the Fundamental Theorem of Finitely Generated Abelian Groups, we have 
$$\text{Coker}(M)\cong \mathbb{Z}_{s_l}\oplus\mathbb{Z}_{s_{l+1}}\oplus\dots\oplus\mathbb{Z}_{s_{n}},$$ 
for some $1\leq l\leq n$, where either $n=l$ and $s_n=1$ (i.e., Coker($M$) is trivial group), or there are (necessarily unique) nonnegative integers $s_l, s_{l+1},\dots,s_n$, for which the nonzero values $s_l, s_{l+1},\dots,s_r$ satisfy $s_j\geq2$ for $1\leq j\leq r$ and $s_i|s_{i+1}$ for $l\leq i\leq r-1$, and $s_{r+1}=\dots=s_n=0$. Coker($M$) is a finite group if and only if $r=n$. In case $l>1$, we define $s_1=s_2=\dots=s_{l-1}=1$. Clearly then we have $$\text{Coker}(M)\cong\mathbb{Z}_{s_1}\oplus\mathbb{Z}_{s_2}\oplus\dots\oplus\mathbb{Z}_{s_l}\oplus\dots\oplus\mathbb{Z}_{s_n},$$ since any additional direct summands are isomorphic to the trivial group $\mathbb{Z}_1$.

We note that if $P,Q$ are invertible in $M_n(\mathbb{Z})$ (hence their determinant is $\pm1$), then Coker$(M)\cong\text{Coker}(PMQ)$. In other words, if $N\in M_n(\mathbb{Z})$ is a matrix which is constructed by performing any sequence of $\mathbb{Z}$-elementary row (or column) operations starting with $M$, then $\text{Coker}(M)\cong\text{Coker}(N)$ as abelian groups. 

\begin{definition}\label{def: SNF}
	Let $M\in M_n(\mathbb{Z})$, and suppose $\text{Coker}(M)\cong\mathbb{Z}_{s_1}\oplus\mathbb{Z}_{s_2}\oplus\dots\oplus\mathbb{Z}_{s_n}$ as described above. The \textit{Smith Normal Form} of $M$ ((SNF($M$) to be short)) is the $n\times n$ diagonal matrix $\text{diag}(s_1, s_2,\dots,s_r,0,\dots,0)$.
\end{definition} For any matrix $M\in M_n(\mathbb{Z})$, the Smith Normal Form of $M$ exists and is unique. If $D\in M_n(\mathbb{Z})$ is a diagonal matrix with entries $d_1,d_2,\dots,d_n$ then clearly $\text{Coker}(D)\cong\mathbb{Z}_{d_1}\oplus\mathbb{Z}_{d_2}\oplus\dots\oplus\mathbb{Z}_{d_n}$. We also note the following.

\begin{proposition}\label{prop: SNF gives Coker}
	Let $M\in M_n(\mathbb{Z})$, and let $S$ denote the Smith Normal Form of $M$. Suppose the diagonal entries of $S$ are $s_1, s_2, \dots, s_n$. Then $$\emph{Coker}(M)\cong\mathbb{Z}_{s_1}\oplus\mathbb{Z}_{s_2}\oplus\dots\oplus\mathbb{Z}_{s_n}.$$ In particular, if there are no zero entries in the Smith Normal Form of $M$, then $|\emph{Coker}(M)|=s_1s_2\dots s_n=|\emph{det}(S)|=|\emph{det}(M)|$.
\end{proposition}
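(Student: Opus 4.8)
The plan is to prove the isomorphism by sandwiching $\mathrm{Coker}(M)$ between $M$ and its Smith Normal Form $S$, using the two facts that bracket Definition \ref{def: SNF}: first, that the cokernel is unchanged when $M$ is multiplied on either side by a matrix of determinant $\pm 1$; and second, that the cokernel of a diagonal matrix is the direct sum of the cyclic groups determined by its diagonal entries. Concretely, I would establish the chain
\[
\mathrm{Coker}(M)\;\cong\;\mathrm{Coker}(S)\;\cong\;\mathbb{Z}_{s_1}\oplus\mathbb{Z}_{s_2}\oplus\dots\oplus\mathbb{Z}_{s_n},
\]
and then read off the order and determinant statements from the right-hand side.

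First I would invoke the existence of the Smith Normal Form (asserted in the text) in its operational form: there exist $P,Q\in M_n(\mathbb{Z})$ with $\det P,\det Q\in\{\pm 1\}$ such that $S=PMQ$, i.e.\ $S$ is obtained from $M$ by a finite sequence of $\mathbb{Z}$-elementary row and column operations. By the remark preceding Definition \ref{def: SNF}, such operations preserve the cokernel, so $\mathrm{Coker}(M)\cong\mathrm{Coker}(S)$. Since $S=\mathrm{diag}(s_1,\dots,s_n)$ is diagonal, the observation recorded immediately after Definition \ref{def: SNF} gives $\mathrm{Coker}(S)\cong\mathbb{Z}_{s_1}\oplus\dots\oplus\mathbb{Z}_{s_n}$, where a zero entry $s_i=0$ contributes the infinite cyclic summand $\mathbb{Z}_0=\mathbb{Z}$. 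Composing the two isomorphisms yields the first assertion.

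For the ``in particular'' clause, I would note that the hypothesis of no zero diagonal entries forces $r=n$ and $s_i\geq 1$ for all $i$, so every summand $\mathbb{Z}_{s_i}$ is finite and $\mathrm{Coker}(M)$ is a finite group of order $\prod_{i=1}^n s_i=s_1 s_2\cdots s_n$. Because $S$ is diagonal, $\det(S)=s_1 s_2\cdots s_n$, giving $|\det(S)|=s_1\cdots s_n$; and from $S=PMQ$ we get $\det(S)=\det(P)\det(M)\det(Q)=\pm\det(M)$, so $|\det(M)|=|\det(S)|$. Chaining these equalities produces $|\mathrm{Coker}(M)|=s_1\cdots s_n=|\det(S)|=|\det(M)|$.

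I do not expect a serious obstacle here: the statement is essentially bookkeeping built on top of the Smith Normal Form theorem, which the text takes as known. The only points requiring care are (i) using the factorization $S=PMQ$ with unimodular $P,Q$ rather than treating the definition of $S$ as merely a relabelling of the invariant factors, since it is this factorization that delivers the determinant identity; and (ii) keeping the trivial summands $\mathbb{Z}_1$ and the infinite summands $\mathbb{Z}_0=\mathbb{Z}$ straight, so that the counting argument is applied only in the finite case where all $s_i\neq 0$.
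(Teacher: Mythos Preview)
Your proposal is correct and follows exactly the approach implicit in the paper: the proposition is stated without a formal proof, since it is an immediate consequence of the two facts recorded just before Definition~\ref{def: SNF} (invariance of the cokernel under multiplication by unimodular matrices, and the cokernel of a diagonal matrix), together with the multiplicativity of determinants. There is nothing to add or correct.
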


Proposition \ref{prop: SNF gives Coker} yields the following
\begin{proposition}\label{prop: K_0 of LPA}
	Let $E$ be a finite graph with $|E^0|=n$ and adjacency matrix $A_E$. Suppose that $L(E)$ is purely infinite simple. Let $S$ be the Smith Normal Form of the matrix $I_n-A_E^t$, with diagonal entries $s_1, s_2, \dots, s_n$. Then $$K_0(L(E))\cong\mathbb{Z}_{s_1}\oplus\mathbb{Z}_{s_2}\oplus\dots\oplus\mathbb{Z}_{s_n}.$$ Moreover, if $K_0(L(E))$ is finite, then an analysis of the Smith Normal Form of the matrix $I_n-A_E^t$ yields $$\left| K_0(L(E))\right| =\left| \emph{det}(I_n-A_E^t)\right|,$$ Conversely, $K_0(L(E))$ is infinite if and only if $\emph{det}(I_n-A_E^t)=0$ and in this case $\emph{rank}(K_0(L(E))=\emph{nullity}(I_n-A_E^t)$.
\end{proposition}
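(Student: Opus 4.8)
The plan is to obtain this proposition as a direct consequence of the two cokernel computations already established, namely Proposition \ref{prop: K0 is cokernal} and Proposition \ref{prop: SNF gives Coker}. First I would set $M = I_n - A_E^t$. Since $L(E)$ is assumed purely infinite simple, Proposition \ref{prop: K0 is cokernal} supplies the isomorphism $K_0(L(E)) \cong \operatorname{Coker}(M)$. Then, writing $S$ for the Smith Normal Form of $M$ with diagonal entries $s_1, \dots, s_n$, Proposition \ref{prop: SNF gives Coker} yields $\operatorname{Coker}(M) \cong \mathbb{Z}_{s_1} \oplus \dots \oplus \mathbb{Z}_{s_n}$. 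Composing these two isomorphisms gives the first displayed formula immediately.

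For the cardinality statement, I would argue that $K_0(L(E))$ is finite precisely when every summand $\mathbb{Z}_{s_i}$ is finite, i.e.\ when no $s_i$ equals $0$. In that case each $s_i \geq 1$ and, by the final sentence of Proposition \ref{prop: SNF gives Coker}, $|K_0(L(E))| = |\operatorname{Coker}(M)| = s_1 s_2 \cdots s_n = |\det(S)|$. The remaining point is to identify $|\det(S)|$ with $|\det(M)|$: since $S = PMQ$ for integer matrices $P,Q$ that are products of the elementary operations reducing $M$ to Smith Normal Form, both $P$ and $Q$ have determinant $\pm 1$, so $\det(S) = \det(P)\det(M)\det(Q) = \pm\det(M)$, whence $|\det(I_n - A_E^t)| = |\det(S)| = |K_0(L(E))|$.

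For the converse, I would observe that $\operatorname{Coker}(M)$ is infinite exactly when at least one diagonal entry of $S$ vanishes, which occurs exactly when $\det(S) = s_1 \cdots s_n = 0$, equivalently $\det(I_n - A_E^t) = 0$ by the identity just noted. In that situation the number of zero entries on the diagonal of $S$ equals the number of free $\mathbb{Z}$-summands of $\operatorname{Coker}(M)$, which is precisely $\operatorname{rank}(K_0(L(E)))$. Since $S$ is obtained from $M$ by invertible transformations over $\mathbb{Z}$, these are in particular invertible over $\mathbb{Q}$, so the $\mathbb{Q}$-rank of $S$ equals that of $M$; hence the number of zero diagonal entries is $n - \operatorname{rank}(M) = \operatorname{nullity}(M)$, giving $\operatorname{rank}(K_0(L(E))) = \operatorname{nullity}(I_n - A_E^t)$.

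As essentially every step is an application of results already in hand, there is no serious obstacle here; the statement is really an assembly of the preceding propositions. The only points demanding genuine care are the bookkeeping identities $|\det(S)| = |\det(M)|$ and the counting of zero invariant factors as the nullity of $M$, both of which rest on the fact—recorded in the discussion preceding Definition \ref{def: SNF}—that the cokernel, the determinant up to sign, and the $\mathbb{Q}$-rank are all invariant under $\mathbb{Z}$-elementary row and column operations.
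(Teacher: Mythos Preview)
Your proposal is correct and matches the paper's approach exactly: the paper states this proposition immediately after Proposition~\ref{prop: SNF gives Coker} with the single remark ``Proposition~\ref{prop: SNF gives Coker} yields the following'' and gives no further proof, so what you have written is precisely the argument the paper leaves implicit, combining Proposition~\ref{prop: K0 is cokernal} with Proposition~\ref{prop: SNF gives Coker} and the surrounding discussion of Smith Normal Form.
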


We record the following theorem which will be used in computations of Smith Normal Forms in later sections
\begin{theorem}[\textbf{Determinant Divisors Theorem}]\cite[Theorem II.9]{MR0340283}\label{DDT}\\
Let $M\in M_n(\mathbb{Z})$. Define $\alpha_0:=1$, and for each $1\leq i\leq n$, define the $i^{th}$ determinant divisor of $M$ to be the integer 
	\begin{center}
		$\alpha_i:=$ the greatest common divisor of the set of all $i\times i$ minors of $M$.
	\end{center}
	Let $s_1, s_2, \dots, s_n$ denote the diagonal entries of the Smith Normal Form of $M$, and assume that each $s_i$ is nonzero. Then $$s_i=\frac{\alpha_i}{\alpha_{i-1}}$$ for each $1\leq i\leq n$.
\end{theorem}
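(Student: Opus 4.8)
The plan is to use the fact that the $i$th determinant divisor $\alpha_i$ is insensitive to multiplication of $M$ on either side by a unimodular integer matrix, and then to read the $\alpha_i$ off directly from the Smith Normal Form, where they are transparent. Since $\mathrm{SNF}(M)$ exists, I may fix matrices $P,Q\in GL_n(\mathbb{Z})$ with $S=PMQ=\mathrm{diag}(s_1,\dots,s_n)$ and $s_1\mid s_2\mid\cdots\mid s_n$; the standing hypothesis that every $s_i$ is nonzero is exactly the statement that $\mathrm{Coker}(M)$ is finite. The theorem will follow once I show $\alpha_i(M)=\alpha_i(S)=s_1s_2\cdots s_i$ for each $i$, since then with the convention $\alpha_0=1$ (the empty product) the claimed formula $s_i=\alpha_i/\alpha_{i-1}$ is immediate upon dividing consecutive determinant divisors.

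The crux, and the step I expect to be the main obstacle, is the invariance of $\alpha_i$ under unimodular multiplication. I would prove it with the generalized Cauchy--Binet formula. Let $U$ be any integer matrix and fix an $i\times i$ minor of $MU$ with row index set $R$ and column index set $C$, $|R|=|C|=i$. Writing $(MU)_{R,C}=M_{R,\bullet}\,U_{\bullet,C}$ and applying Cauchy--Binet to this product of an $i\times n$ and an $n\times i$ block gives
$$\det((MU)_{R,C})=\sum_{|T|=i}\det(M_{R,T})\,\det(U_{T,C}),$$
the sum ranging over all $i$-element subsets $T\subseteq\{1,\dots,n\}$. Each coefficient $\det(U_{T,C})$ is an integer, so $\alpha_i(M)$ divides every $i\times i$ minor of $MU$ and hence divides their gcd $\alpha_i(MU)$. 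When $U\in GL_n(\mathbb{Z})$, the integral matrix $U^{-1}$ yields the reverse divisibility via $M=(MU)U^{-1}$, so $\alpha_i(M)=\alpha_i(MU)$ (gcd's being taken nonnegative, this is a genuine equality). The identical argument applied to $UM$ handles left multiplication, and applying both to $S=PMQ$ gives $\alpha_i(M)=\alpha_i(S)$.

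It then remains to compute $\alpha_i(S)$ for the diagonal matrix $S$, which is routine. For a diagonal matrix, the Leibniz expansion of a minor $\det(S_{R,C})$ has a nonzero term only if the sorted index sets satisfy $R=C$, in which case $\det(S_{R,R})=\prod_{r\in R}s_r$ and the minor vanishes otherwise. Thus the nonzero $i\times i$ minors of $S$ are exactly the products of $i$ of the diagonal entries. The divisibility chain $s_1\mid s_2\mid\cdots\mid s_n$ shows that $s_1s_2\cdots s_i$ (the product of the $i$ smallest-indexed entries) divides each such product, while it is itself one of them; hence $\alpha_i(S)=s_1s_2\cdots s_i$. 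Combining this with the invariance established above gives $\alpha_i(M)=s_1\cdots s_i$ for every $i$, and therefore $s_i=\alpha_i/\alpha_{i-1}$, as required.
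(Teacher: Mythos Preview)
Your proof is correct. Note, however, that the paper does not supply its own proof of this statement: it is quoted as \cite[Theorem II.9]{MR0340283} (Newman, \emph{Integral matrices}) and used as a black box in later computations. There is therefore no in-paper argument to compare against. Your approach---invariance of the determinant divisors $\alpha_i$ under unimodular multiplication via Cauchy--Binet, followed by a direct evaluation of $\alpha_i$ on the diagonal matrix $\mathrm{SNF}(M)$---is precisely the standard textbook proof, and is essentially what one finds in Newman's reference as well.
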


\subsection{Weighted Cayley graphs and circulant matrices}\label{cayley prelims}\hfill\\

Let $E$ be a graph and $w:E^1\rightarrow\mathbb{N}$ be a function, the \textit{weighted graph} of $E$ associated to $w$ is a new graph $E_w=(E^0,E_w^1,s_w,r_w)$ where $E_w^1:=\{e_1\dots e_{w(e)}\mid e\in E^1\}, s_w(e_i)=s(e)$ and $r_w(e_i)=r(e)$.

Recall that given a group $G$, and a subset $S\subseteq G$, the \textit{associated Cayley graph} $\mathsf{Cay}(G,S)$ is the directed graph $E(G,S)$ with vertex set $\left\lbrace v_g\mid g\in G\right\rbrace $, and in which there is an edge $e(g,h)$ from $v_g$ to $v_h$ in case there exists (a necessarily unique) $s\in S$ with $h=gs$ in $G$. Thus, in $\mathsf{Cay}(G,S)$, at every vertex $v_g$, the number of edges emitted is $|S|$. The identity of $G$ is in $S$ if and only  $\mathsf{Cay}(G,S)$ contains a loop at every vertex.

\begin{definition}\label{def: w-Cayley graph} 
Let $G$ be a group, $S\subseteq G$ and $w:S\rightarrow \mathbb{N}$ be a map. Then $w$ induces a map (also denoted by $w$) from the set of edges of $\mathsf{Cay}(G,S)$ to $\mathbb{N}$ by $e(g,h) \mapsto w(s)$ whenever $h=gs$. The weighted graph of $\mathsf{Cay}(G,S)$ associated to the map $w$ is called the \textit{weighted Cayley graph} (or \textit{w-Cayley graph}) and denoted by $\mathsf{Cay}(G,S,w)$
\end{definition}

In particular $\mathsf{Cay}(G,S)$ is a special case of $\mathsf{Cay}(G,S,w)$ when $w$ is the constant map $w(e)=1$ for every edge $e$. In this case we say $\mathsf{Cay}(G,S)$ is unweighted. 

\begin{remark}\label{remark: <S>=G means Cay is connected}
$\mathsf{Cay}(G,S,w)$ is strongly connected if and only if $\langle S\rangle=G$. In particular, $\mathsf{Cay}(\langle S\rangle,S,w)$ is a connected component of $\mathsf{Cay}(G,S,w)$, where $\langle S\rangle$ is the subgroup generated by $S$.
\end{remark}
\begin{notation}
	For a positive integer $n$, let $G=\mathbb{Z}_n$, and $S$ be any non-empty subset of $G$. We denote the w-Cayley graph $\mathsf{Cay}(G,S,w)$ simply by $C_n(S,w)$.

In other words, if $S=\left\lbrace s_1,s_2,\dots s_k\right\rbrace $ then the w-Cayley graph $C_n(S,w)$ is the directed graph with vertex set $\left\lbrace v_0,v_1,v_2,\dots,v_{n-1}\right\rbrace$ and edge set $\{e_l(i,s_j)\mid 0\leq i\leq n-1, 1\leq j\leq k, 1\leq l \leq w(s_j)\}$	for which $s(e_l(i,s_j))=v_i$, and $r(e_l(i,s_j))=v_{i+s_j}$ where indices are interpreted modulo $n$. Therefore $C_n(S,w)$ is a finite graph. 
\end{notation}

\begin{definition}\label{def: circulant matrix}
	For a positive integer $n$, let $\textbf{c}=\left( c_0,c_1,\dots,c_{n-1}\right) \in\mathbb{Q}^n$. Consider the shift operator $T:\mathbb{Q}^n\rightarrow\mathbb{Q}^n$, defined by $T(c_0,c_1,\dots,c_{n-1})=(c_{n-1},c_0,\dots,c_{n-2}).$ The \textit{circulant matrix} $circ(\textit{c})$, associated with $\textbf{c}$ is the  $n\times n$ matrix $C$ whose $k$th row is $T^{k-1}(\textbf{c})$, for $k=1,2,\dots.n$. Thus $C$ is of the form

\[C=\begin{pmatrix}
c_0 & c_1 & c_2 & \dots & c_{n-1} \\
c_{n-1} & c_0 & c_1 & \dots & c_{n-2}\\
\vdots & \vdots & \vdots & \dots & \vdots \\
c_1 & c_2 & c_3 & \dots & c_0 \\
\end{pmatrix}\]
\end{definition}

In other words, a circulant matrix is obtained by taking an arbitrary first row, and shifting it cyclically one position to the right in order to obtain successive rows. The $(i,j)$ element of $C$ is $c_{j-i}$, where subscripts are taken modulo $n$. 

Note that $A_{C_n(S,w)}$ is the $n\times n$ matrix with the $(i,j)^{th}$ entry is $w(s)$ if $i+s=j$ modulo $n$, for some $s\in S$, otherwise $0$. Hence $A_{C_n(S,w)}$ is a circulant matrix with non-negative integer entries. In the case of unweighted Cayley graph $C_n(S)$, the adjacency matrix is binary circulant matrix. 

\begin{definition}\label{def: representer polynomial}
	For $\textbf{c}\in\mathbb{Q}^n$, let $C=circ(\textbf{c})$. The \textit{representer polynomial} of $C$ is defined to be the polynomial $P_C(x)=c_0+c_1x+\dots+c_{n-1}x^{n-1}\in\mathbb{Q}[x]$.
\end{definition}
\begin{lemma}\label{lemma: eigenvalue of circulant matrix}
	Let $C=circ(\textbf{c})$ be a circulant matrix. Then the eigenvalues of $C$ equal $P_C(\zeta_n^k)=c_0+c_1\zeta_n^k+\dots+c_{n-1}\zeta_n^{k(n-1)}$ for $k=0,1,\dots,n-1$, where $\zeta_n=e^{\frac{2\pi i}{n}}$, the primitive $n^{th}$ root of unity. Further $$\det(C)=\prod\limits_{l=0}^{n-1}\left(\sum\limits_{k=0}^{n-1}c_j\zeta_n^{lk}\right).$$	
\end{lemma}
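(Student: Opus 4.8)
\section*{Proof proposal}

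The plan is to realize every circulant matrix as a polynomial in a single ``building block'' matrix and to diagonalize that block explicitly. Let $P := circ(0,1,0,\dots,0)$ denote the basic cyclic shift matrix, whose $(i,j)$ entry is $1$ exactly when $j-i\equiv 1\pmod n$. Directly from Definition \ref{def: circulant matrix} and the fact that the $(i,j)$ entry of $circ(\textbf{c})$ is $c_{j-i}$ with subscripts taken modulo $n$, one checks that $P^k$ has $(i,j)$ entry equal to $1$ precisely when $j-i\equiv k\pmod n$, and hence that
$$C \;=\; \sum_{k=0}^{n-1} c_k P^k \;=\; P_C(P),$$
where $P_C$ is the representer polynomial of Definition \ref{def: representer polynomial}. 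Thus the whole problem reduces to understanding the spectrum of $P$.

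First I would diagonalize $P$. Since $P^n=I$, the minimal polynomial of $P$ divides $x^n-1$, whose roots are the distinct $n$-th roots of unity $\zeta_n^k$, $k=0,\dots,n-1$. To make this effective I would exhibit the eigenvectors directly: setting $v_k := (1,\zeta_n^k,\zeta_n^{2k},\dots,\zeta_n^{(n-1)k})^t$, a one-line computation using $(Px)_i=x_{i+1}$ gives $Pv_k=\zeta_n^k v_k$. Because the $n$ eigenvalues $\zeta_n^0,\dots,\zeta_n^{n-1}$ are pairwise distinct, $P$ has $n$ linearly independent eigenvectors, so the Fourier matrix $F$ whose columns are $v_0,\dots,v_{n-1}$ is invertible and $F^{-1}PF=\operatorname{diag}(\zeta_n^0,\dots,\zeta_n^{n-1})$.

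Next I would transport this diagonalization through the polynomial $P_C$. Conjugation by $F$ is a ring homomorphism on matrices, so
$$F^{-1}CF \;=\; F^{-1}P_C(P)F \;=\; P_C\!\left(F^{-1}PF\right) \;=\; \operatorname{diag}\!\bigl(P_C(\zeta_n^0),\dots,P_C(\zeta_n^{n-1})\bigr).$$
Hence the eigenvalues of $C$ are exactly $P_C(\zeta_n^k)=\sum_{j=0}^{n-1}c_j\zeta_n^{jk}$ for $k=0,\dots,n-1$, which is the first assertion. Finally, taking determinants of the displayed diagonal matrix and using that $\det$ is invariant under conjugation gives $\det(C)=\prod_{k=0}^{n-1}P_C(\zeta_n^k)$, which is the claimed product formula.

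I do not expect a genuine obstacle here: the only points demanding care are the bookkeeping with indices modulo $n$ when verifying $C=P_C(P)$ and $Pv_k=\zeta_n^k v_k$, and the observation that distinctness of the roots of unity is exactly what guarantees diagonalizability, so that evaluating $P_C$ at the diagonal form of $P$ is legitimate. If one prefers to avoid eigenvectors entirely, an alternative is to recognize $F$ as the (scaled) discrete Fourier transform matrix and cite the standard fact that it simultaneously diagonalizes all circulant matrices; I would keep the explicit eigenvector route since it is self-contained.
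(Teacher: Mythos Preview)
Your argument is correct and is in fact the standard proof: write $C=P_C(P)$ for the shift $P$, exhibit the Fourier vectors $v_k$ as eigenvectors of $P$ with distinct eigenvalues $\zeta_n^k$, and then read off both the spectrum and the determinant of $C$ from the resulting diagonalization. The index bookkeeping you flag is indeed the only place to be careful, and you handle it correctly.

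As for comparison with the paper: the paper does not give its own proof of this lemma at all, but simply refers the reader to \cite[Theorem~6]{MR2931628}. So your write-up is strictly more self-contained than what appears here; there is nothing to reconcile beyond noting that the cited reference proves the result by essentially the same diagonalization you carry out.
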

For a proof of Lemma \ref{lemma: eigenvalue of circulant matrix} we refer the reader to \cite[Theorem 6]{MR2931628}.

Note that the $n^\text{th}$ cyclotomic polynomial, denoted by $$\Phi_n(x)=\prod\limits_{\substack{1\le a < n\\ \gcd(a,n)=1}}(x-\zeta_n^a),$$ is an element of $\mathbb{Z}[x]$. Also, $x^n-1=\prod\limits_{d|n}\Phi_d(x)$. Since $\Phi_n(x)$ is the minimal polynomial of $\zeta_n$, $f(\zeta_n)=0$ for some $f(x)\in\mathbb{Z}[x]$ implies $\Phi_n(x)$ divides $f(x)$. By applying Lemma \ref{lemma: eigenvalue of circulant matrix} we get 
\begin{lemma}\label{lemma: circulant matrix is singular}
	Let $C=circ(\textbf{c})$. Then the following are equivalent.
	\begin{enumerate}
		\item[(a)] $C$ is singular 
		\item[(b)] $P_C(\zeta_n^k)=0$ for some $k\in\mathbb{Z}$.
		\item[(c)] The polynomials $P_C(x)$ and $x^n-1$ are not relatively prime.
	\end{enumerate}
\end{lemma}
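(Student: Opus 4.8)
The plan is to derive all three equivalences directly from Lemma \ref{lemma: eigenvalue of circulant matrix}, which supplies both the list of eigenvalues of $C$ and the factored form of $\det(C)$ in terms of the values $P_C(\zeta_n^k)$. The whole argument is essentially bookkeeping on top of that lemma together with the factorization of $x^n-1$.

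First I would establish (a) $\Leftrightarrow$ (b). A square matrix over $\mathbb{C}$ is singular precisely when $\det=0$, equivalently when $0$ is one of its eigenvalues. By Lemma \ref{lemma: eigenvalue of circulant matrix} the eigenvalues of $C$ are exactly the numbers $P_C(\zeta_n^k)$ for $k=0,1,\dots,n-1$ (equivalently, the displayed determinant formula there equals $\prod_{k=0}^{n-1}P_C(\zeta_n^k)$). Hence $C$ is singular iff $P_C(\zeta_n^k)=0$ for some $k\in\{0,\dots,n-1\}$. Since $\zeta_n^k$ depends only on $k$ modulo $n$, enlarging the index range from $\{0,\dots,n-1\}$ to all of $\mathbb{Z}$ changes nothing, and this is precisely condition (b).

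Next I would handle (b) $\Leftrightarrow$ (c). Each $\zeta_n^k$ is a root of $x^n-1$, and the full set of roots of $x^n-1$ in $\mathbb{C}$ is exactly $\{\zeta_n^k : k=0,\dots,n-1\}$; these are distinct, so $x^n-1$ is separable and splits as $\prod_{k=0}^{n-1}(x-\zeta_n^k)$ over $\mathbb{C}$. Consequently condition (b) asserts exactly that $P_C(x)$ and $x^n-1$ share a common root in $\mathbb{C}$. Over the algebraically closed field $\mathbb{C}$ two polynomials fail to be relatively prime iff they have a common root, so (b) is equivalent to $P_C(x)$ and $x^n-1$ having a nontrivial common factor in $\mathbb{C}[x]$.

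The one point requiring genuine care, and the step I would be most careful to state explicitly, is that condition (c) concerns relative primality in $\mathbb{Q}[x]$ whereas the root argument takes place in $\mathbb{C}[x]$. I would close by invoking base-change invariance of the gcd: the Euclidean algorithm computing $\gcd(P_C,\,x^n-1)$ uses only field operations within the coefficient field, so the gcd computed in $\mathbb{Q}[x]$ and in $\mathbb{C}[x]$ agree up to a unit. Hence $P_C$ and $x^n-1$ are relatively prime in $\mathbb{Q}[x]$ iff they are relatively prime in $\mathbb{C}[x]$, which completes the chain (a) $\Leftrightarrow$ (b) $\Leftrightarrow$ (c). No step presents a serious obstacle; the mild subtlety is simply ensuring this passage between $\mathbb{Q}[x]$ and $\mathbb{C}[x]$ is made rather than glossed over.
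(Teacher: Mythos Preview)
Your argument is correct. For (a)\,$\Leftrightarrow$\,(b) you do exactly what the paper does: invoke Lemma~\ref{lemma: eigenvalue of circulant matrix} to identify the eigenvalues of $C$ with the values $P_C(\zeta_n^k)$.

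For (b)\,$\Leftrightarrow$\,(c) you take a slightly different route. The paper places just before this lemma the observation that $x^n-1=\prod_{d\mid n}\Phi_d(x)$ and that $\Phi_d$ is the minimal polynomial of a primitive $d$th root of unity; from $P_C(\zeta_n^k)=0$ one then reads off directly that the appropriate cyclotomic factor $\Phi_d$ divides $P_C$ in $\mathbb{Q}[x]$, exhibiting the common factor explicitly. You instead pass to $\mathbb{C}[x]$, use the common-root criterion over an algebraically closed field, and then descend back to $\mathbb{Q}[x]$ via stability of the Euclidean-algorithm gcd under field extension. Both arguments are standard and equally short; the paper's version has the minor advantage of naming the common factor, while yours avoids any mention of cyclotomic polynomials and makes the $\mathbb{Q}[x]$ versus $\mathbb{C}[x]$ issue explicit rather than implicit.
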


\section{Leavitt path algebras of $C_n(S,w)$}

\begin{theorem}\label{theorem: wCayley is pis iff}
Let $G$ be a finite group, $S$ its generating set and $w:S\rightarrow\mathbb{N}$ a weight function. Let $W=\sum\limits_{s\in S}w(s)$. Then the following are equivalent.
\begin{enumerate}
    \item $L(\mathsf{Cay}(G,S,w)$ is purely infinite simple
    \item $W\geq 2$
    \item $L(\mathsf{Cay}(G,S,w)$ does not have Invariant Basis Number
\end{enumerate}
\end{theorem}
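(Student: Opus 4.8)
The plan is to prove the chain of equivalences by establishing $(1)\Leftrightarrow(2)$ from the combinatorial characterization of purely infinite simple Leavitt path algebras (Theorem \ref{theorem: LPA is pis iff}), and then $(2)\Leftrightarrow(3)$, where the decisive structural fact is that in $\mathsf{Cay}(G,S,w)$ every vertex both emits and receives exactly $W$ edges. Write $E=\mathsf{Cay}(G,S,w)$ and $R=L(E)$. I note first that since $S\neq\emptyset$ and $w$ takes values in $\mathbb{N}=\{1,2,\dots\}$, we always have $W=\sum_{s\in S}w(s)\geq|S|\geq1$; hence the only alternative to $W\geq2$ is $W=1$, which forces $|S|=1$ and $w\equiv1$.

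For $(1)\Leftrightarrow(2)$ I would verify the three conditions of Theorem \ref{theorem: LPA is pis iff} against the structure of $E$. Because $\langle S\rangle=G$, Remark \ref{remark: <S>=G means Cay is connected} gives that $E$ is strongly connected; hence every vertex connects to every other vertex, the only hereditary and saturated subsets of $E^0$ are $\emptyset$ and $E^0$, and (as $E$ has edges) $E$ contains a cycle to which every vertex connects. The sole remaining condition is Condition (L), and this is exactly where $W$ enters. Every vertex has out-degree $W$, so if $W\geq2$ then for any cycle and any vertex $v$ on it, among the $W\geq2$ edges emitted at $v$ at least one differs from the edge used by the cycle, and such an edge is an exit; thus Condition (L) holds and $R$ is purely infinite simple. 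Conversely, if $W=1$ then $E$ is a single $|G|$-cycle in which each vertex emits exactly one edge, so this cycle has no exit, Condition (L) fails, and $R$ is not purely infinite simple. This settles $(1)\Leftrightarrow(2)$.

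For $(2)\Rightarrow(3)$ the key observation is that $A_E$ is a circulant matrix all of whose row and column sums equal $W$; in particular $A_E^t\mathbf{1}=W\mathbf{1}$, where $\mathbf{1}=(1,\dots,1)^t$. Hence $(I_n-A_E^t)\mathbf{1}=(1-W)\mathbf{1}$, so $(W-1)\mathbf{1}\in\mathrm{Im}(I_n-A_E^t)$. Under the isomorphism $K_0(R)\cong\mathrm{Coker}(I_n-A_E^t)$ of Proposition \ref{prop: K0 is cokernal}, the class $[R]=\sum_{v\in E^0}[v]$ corresponds to $\mathbf{1}$, so $(W-1)[R]=0$ in $K_0(R)$. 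Since $W\geq2$ we already know from $(1)$ that $R$ is purely infinite simple, so by Theorem \ref{theorem: pis implies K0 isomorphic to M*} the nonzero part $\mathcal{V}(R)^\ast$ is a group isomorphic to $K_0(R)$. Transporting $(W-1)[R]=0$ back through this isomorphism, the nonzero class $[R^{W-1}]$ is sent to $0$ and hence equals the identity of $\mathcal{V}(R)^\ast$; therefore in this group $[R^{W}]=[R]\oplus[R^{W-1}]=[R]$, and since both are nonzero classes this is a genuine module isomorphism $R^{W}\cong R$ with $W\neq1$, so $R$ fails to have IBN. (This generalizes the defining Leavitt relation $L(1,m)^m\cong L(1,m)$.)

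For $(3)\Rightarrow(2)$ I argue the contrapositive: if $W=1$ then $E$ is a single $|G|$-cycle, and its graph monoid $M_E$ (Definition \ref{graph monoid}) is presented by the relations $[v_i]=[v_{i+s}]$, which identify all generators to a single element $v$ with no further relations, giving $M_E\cong(\mathbb{Z}^+,+)$. By Theorem \ref{theorem: Vmoniod is graph monoid}, $\mathcal{V}(R)\cong M_E$ is cancellative and $[R]$ corresponds to $|G|\,v$; thus $[R^a]=[R^b]$ forces $a|G|=b|G|$, i.e.\ $a=b$, so $R$ has IBN. The main obstacle is the step $(2)\Rightarrow(3)$: the relation $(W-1)[R]=0$ is a priori only a statement in the group completion $K_0(R)$, and promoting it to an honest isomorphism of free modules $R^{W}\cong R$ requires the purely infinite simple hypothesis, which is precisely what makes $\mathcal{V}(R)^\ast$ itself a group in which equality of nonzero classes means module isomorphism — absent pure infinite simplicity, a relation in $K_0$ need not reflect any isomorphism of modules.
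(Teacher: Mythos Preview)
Your proof is correct. The argument for $(1)\Leftrightarrow(2)$ matches the paper's. For $(2)\Rightarrow(3)$ you and the paper exploit the same underlying fact---that every vertex of $E$ has out-degree $W$, equivalently that each column of $A_E$ sums to $W$---but deploy it differently. The paper computes directly in the graph monoid $M_E\cong\mathcal V(R)$ (Theorem~\ref{theorem: Vmoniod is graph monoid}) to obtain $\sum_g[v_g]=W\sum_g[v_g]$, hence $[R]=[R^W]$ in $\mathcal V(R)$ and thus $R\cong R^W$, without ever invoking pure infinite simplicity. You instead pass to $K_0$, obtain $(W-1)[R]=0$ there, and then use the pure infinite simplicity already established in part $(1)$ to lift this back to a genuine module isomorphism via $\mathcal V(R)^\ast\cong K_0(R)$. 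Your route is logically sound but more circuitous; the paper's monoid argument shows that $(2)\Rightarrow(3)$ is in fact independent of $(1)$, which also explains why the detour through pure infinite simplicity that you flag as ``the main obstacle'' is actually avoidable. For $(3)\Rightarrow(2)$ the paper quotes the known isomorphism $L(C_n)\cong M_n(K[x,x^{-1}])$ from \cite{MR2403625}, whereas you compute the graph monoid of the $n$-cycle directly as $\mathbb Z^+$; your argument is more self-contained on this point.
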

\begin{proof}

$(1)\Rightarrow(2)\Leftarrow(3)$. Let $|G|=n$. If $W=1$, then $|S|=1$. Setting $S=\{g\}$ we have $G$ is cyclic group generated by $g$. Hence $\mathsf{Cay}(G,S,w)$ is the graph $C_n$ which is cycle of length $n$ , which does not satisfy condition $L$, which contradicts $(1)$. By \cite[Theorem 3.8 and 3.10]{MR2403625} $L(C_n)\cong M_n(K[x,x^{-1}])$ which has Invariant Basis Number.

$(2)\Rightarrow(1)$. Let $W\geq 2$. In $\mathsf{Cay}(G,S,w)$, the number of edges emitted at each vertex $v_g$ is $W$. So there is at least two edges emitted from each vertex. This also implies condition $(L)$. Since $\langle S\rangle=G$, $\mathsf{Cay}(G,S,w)$ is strongly connected. Hence for any vertex $v_g$ there is a non-trivial path connecting $v_g$ to $v_1$ and vice versa. Therefore $\mathsf{Cay}(G,S,w)$ contains a cycle and there is no non-trivial hereditary subset of vertices.

$(2)\Rightarrow(3)$. For a finite graph $E$, $L(E)$ has Invariant Basis Number if and only if for each pair of positive integers $m$ and $n$, $$m\sum\limits_{v\in E^0}[v]=n\sum\limits_{v\in E^0}[v]~\text{in}~M_E\Rightarrow m=n.$$
In $M_{\mathsf{Cay}(G,S,w)}$, for each $v_g$ we have $[v_g]=\sum\limits_{s\in S}w(s)[v_{gs}]$ and hence,
$$\sum\limits_{g\in G}[v_g]=\sum\limits_{g\in G}\sum\limits_{s\in S}w(s)[v_{gs}]=\sum\limits_{s\in S}\sum\limits_{g\in G}w(s)[v_{gs}]=\sum\limits_{s\in S}w(s)\sum\limits_{g\in G}[v_{gs}]=W\sum\limits_{g\in G}[v_{gs}].$$
Since $G$ is a finite group we have $G=\{gs\mid g\in G\}$ and hence $\sum\limits_{g\in G}[v_{gs}]=\sum\limits_{g\in G}[v_g]$. Hence we have $\sum\limits_{g\in G}[v_g]=W\sum\limits_{g\in G}[v_g]$. If $W\geq 2$ then $L(\mathsf{Cay}(G,S,w))$ does not have Invariant Basis Number.
\end{proof}
\begin{corollary}\cite[Proposition 4.1, Theorem 4.2]{MR3955043}
Let $G$ be a finite group, $S$ its generating set. Then the following are equivalent.
\begin{enumerate}
    \item $L(\mathsf{Cay}(G,S)$ is purely infinite simple
    \item $L(\mathsf{Cay}(G,S)$ does not have Invariant Basis Number
    \item $|S|\geq 2$
\end{enumerate} 
\end{corollary}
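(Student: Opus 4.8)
The plan is to derive this corollary as an immediate specialization of Theorem \ref{theorem: wCayley is pis iff}, rather than proving any of the three equivalences from scratch. The key observation, already recorded in the preliminaries, is that the unweighted Cayley graph $\mathsf{Cay}(G,S)$ is precisely $\mathsf{Cay}(G,S,w)$ in the case where $w$ is the constant map $w(s)=1$ for every $s\in S$. So I would begin by fixing this weight function $w\equiv 1$ and noting that then
$$W=\sum_{s\in S}w(s)=\sum_{s\in S}1=|S|.$$
Consequently the condition $W\geq 2$ appearing in Theorem \ref{theorem: wCayley is pis iff} is literally the condition $|S|\geq 2$ in the present unweighted setting.

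With this identification in hand, the three statements of the corollary are exactly the three statements of Theorem \ref{theorem: wCayley is pis iff} re-listed in a permuted order: corollary item (1) (pure infinite simplicity of $L(\mathsf{Cay}(G,S))$) is theorem item (1); corollary item (2) (failure of Invariant Basis Number) is theorem item (3); and corollary item (3), namely $|S|\geq 2$, is theorem item (2) after substituting $W=|S|$. Thus I would simply invoke the already-established equivalence $(1)\Leftrightarrow(2)\Leftrightarrow(3)$ of Theorem \ref{theorem: wCayley is pis iff} for this particular $w$, and the corollary follows. No new graph-theoretic verification (Condition $(L)$, sink-freeness, absence of nontrivial hereditary and saturated sets) is needed, since all of that was carried out at the level of the weighted theorem and applies verbatim when $w\equiv1$.

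There is essentially no genuine obstacle here: the entire content lies in recognizing that the unweighted hypothesis collapses the invariant $W$ to the cardinality $|S|$. The only point requiring a sentence of care is the bookkeeping of matching the numbered items of the corollary against the numbered items of the theorem, so that the reader sees that $(1)\Leftrightarrow(2)\Leftrightarrow(3)$ in the corollary is the same chain of equivalences as in the theorem and not a weaker or rearranged claim. I would therefore keep the proof to two or three sentences, making the substitution $W=|S|$ explicit and citing Theorem \ref{theorem: wCayley is pis iff} as the sole input.
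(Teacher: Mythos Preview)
Your proposal is correct and matches the paper's own proof exactly: the paper's argument is the single line ``In this case $W=|S|$,'' which is precisely the substitution you describe before invoking Theorem~\ref{theorem: wCayley is pis iff}.
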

\begin{proof}
    In this case $W=|S|$.
\end{proof}

From now on we work with the following assumption.
\begin{assumption}
Let $n\in\mathbb{N}$, $S=\{s_1,s_2,\dots s_k\}\subseteq\mathbb{Z}_n$. $s_1<s_2<\dots<s_k$. Further set $W:=\sum\limits_{s_j\in S}w(s_j)$.
\end{assumption}

\begin{theorem}\label{theorem: order of identity in K_0}
	Let $\langle S\rangle=\mathbb{Z}_n$ and $W\geq2$. Then in the group $M_{C_n(S,w)}^\ast$, the order of $\sum_{i=0}^{n-1}[v_i]$ divides $W-1$. Further if $\gcd(W-1,n)=1$ then order of  $\sum_{i=0}^{n-1}[v_i]$ is $W-1$.
\end{theorem}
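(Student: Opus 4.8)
The plan is to combine the monoid identity already established in the proof of Theorem \ref{theorem: wCayley is pis iff} with the cokernel description of $K_0$ furnished by Proposition \ref{prop: K0 is cokernal}. Since $\langle S\rangle=\mathbb{Z}_n$ and $W\geq 2$, Theorem \ref{theorem: wCayley is pis iff} tells us $L(C_n(S,w))$ is purely infinite simple, so $M_{C_n(S,w)}^\ast$ is genuinely a group and the cokernel picture is available. Write $\sigma:=\sum_{i=0}^{n-1}[v_i]$.

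For the divisibility assertion I would simply reuse the computation carried out in the proof of Theorem \ref{theorem: wCayley is pis iff}, namely $\sum_{g}[v_g]=W\sum_{g}[v_g]$, obtained by summing the defining relation $[v_g]=\sum_{s\in S}w(s)[v_{gs}]$ over $g$ and reindexing. In the group $M_{C_n(S,w)}^\ast$ this reads $(W-1)\sigma=0$, so the order of $\sigma$ divides $W-1$.

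For the exactness assertion under $\gcd(W-1,n)=1$, I would pass to the cokernel of $I_n-A_{C_n(S,w)}^t$. By Proposition \ref{prop: K0 is cokernal} the element $\sigma$ corresponds to $\vec{1}+\mathrm{Im}(I_n-A_{C_n(S,w)}^t)$, where $\vec{1}=(1,\dots,1)^t$, so the order of $\sigma$ is the least positive integer $m$ for which there exists $\vec{x}\in\mathbb{Z}^n$ with $(I_n-A_{C_n(S,w)}^t)\vec{x}=m\vec{1}$. The key device is the row-sum functional $\vec{1}^t\colon\mathbb{Z}^n\to\mathbb{Z}$. Because $A_{C_n(S,w)}$ is circulant with every row summing to $W$, one has $A_{C_n(S,w)}\vec{1}=W\vec{1}$, hence $\vec{1}^t(I_n-A_{C_n(S,w)}^t)=(1-W)\vec{1}^t$. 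Applying this functional to the witness equation gives $-(W-1)(\vec{1}^t\vec{x})=mn$, so $(W-1)\mid mn$; the hypothesis $\gcd(W-1,n)=1$ then forces $(W-1)\mid m$, whence $m\geq W-1$. Combined with the first part, the order of $\sigma$ is exactly $W-1$.

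The main obstacle is the lower bound in the second part, and the crux is the observation that applying $\vec{1}^t$ converts the geometric membership condition $m\vec{1}\in\mathrm{Im}(I_n-A_{C_n(S,w)}^t)$ into the single numerical divisibility $(W-1)\mid mn$; once that translation is in place, the coprimality hypothesis finishes the argument immediately.
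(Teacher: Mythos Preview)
Your proposal is correct and follows essentially the same approach as the paper: both establish $(W-1)\sigma=0$ by summing the defining relations, and both obtain the lower bound by passing to the cokernel and summing the coordinates of a witness $\vec{x}$ to extract the divisibility $(W-1)\mid mn$. Your phrasing via the row-sum functional $\vec{1}^t$ is just a compact matrix reformulation of the paper's ``add all the equations'' step.
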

\begin{proof}
	Let $S=\{s_1,s_2,\dots,s_k\}$. Then in $M_{C_n(S,w)}^*$, we have the following relations
$$[v_i]=\sum\limits_{s_j\in S} w(s_j)[v_{i+s_j}].$$

Let $\sigma=\sum_{i=0}^{n-1}[v_i]$. Then using the defining relations in $M_{C_n(S),w}^\ast$, we have 

\begin{center}
$\sigma=\sum\limits_{i=0}^{n-1}[v_i]
=\sum\limits_{i=0}^{n-1}\left( \sum\limits_{s_j\in S} w(s_j)[v_{i+s_j}]\right) 
=\sum\limits_{s_j\in S}w(s_j)\left(\sum\limits_{i=0}^{n-1}  [v_{i+s_j}]\right)$ \\
$=\sum\limits_{s_j\in S}w(s_j)\left( \sum\limits_{i=0}^{n-1} [v_{i}]\right)
=\left( \sum\limits_{s_j\in S}w(s_j)\right) \sigma=W\sigma.$
\end{center}
Thus, in the group $M_{C_n(S,w)}^*$, we have $\left( W-1\right) \sigma=0$. This proves the first part of the theorem.

By Theorem \ref{prop: K0 is cokernal}, $M_{C_n(S,w)}^*\cong \text{Coker}(I_n-A_{C_n(S,w)}^t)$, and under the isomorphism $[v_i]\mapsto \vec{b_i}+\text{Im}(I_n-A_{C_n(S,w)}^t)$, where $\vec{b_i}$ is the element of $\mathbb{Z}^n$ which has 1 in the $i^{th}$ coordinate and $0$ elsewhere. 

Hence for a natural number $d$, $d\sigma=0$ in $M_{C_n(S,w)}^*$ if and only if $d\vec{v}\in\text{Im}(I_n-A_{C_n(S,w)}^t)$ where $\vec{v}=(1,1,\dots,1)^t$. This is equivalent to $\vec{u}-A^t\vec{u}=d\vec{v}$ for some $\vec{u}=(u_0,u_1,\dots,u_{n-1})\in\mathbb{Z}^n$, which in turn equivalent to 
$$u_l-\sum\limits_{s_j\in S}w(s_j)u_{n-s_j+l}=d \hspace{1cm} 0\leq l\leq n-1 $$
Adding all the above equations we get
$$\sum\limits_{l=0}^{n-1}u_l-\sum\limits_{l=0}^{n-1}\sum\limits_{s_j\in S}w(s_j)u_{n-s_j+l}=nd$$

\begin{align*}
\text{LHS} &= \sum\limits_{l=0}^{n-1}u_l-\sum\limits_{s_j\in S}\sum\limits_{l=0}^{n-1}w(s_j)u_{n-s_j+l}\\
&= \sum\limits_{l=0}^{n-1}u_l-\sum\limits_{s_j\in S}w(s_j)\sum\limits_{l=0}^{n-1}u_{n-s_j+l}\\
&= \left( 1-\sum\limits_{s_j\in S}w(s_j)\right)\sum\limits_{l=0}^{n-1}u_l\\
&= \left( 1-W\right)\sum\limits_{l=0}^{n-1}u_l
\end{align*}

Thus $W-1$ divides $nd$. If $\gcd(W-1,n)=1$, then $W-1$ divies $d$. In particular, when $\gcd(W-1,n)=1$ order of $\sum\limits_{i=0}^{n-1}[v_i]$ is $W-1$.

\end{proof}

\begin{assumption}
In what follows, we always assume that $\langle S\rangle=\mathbb{Z}_n$ and $W\geq2$.
\end{assumption}

As we noted in \ref{lemma: eigenvalue of circulant matrix} for a circulant matrix $C$,
$$\det(C)=\prod\limits_{l=0}^{n-1}\left(\sum\limits_{j=0}^{n-1}c_j\zeta_n^{lj}\right)$$ 
where $\zeta_n=e^{\frac{2\pi i}{n}}$, the primitive $n^{th}$ root of unity. For $C_n(S,w)$, the adjacency matrix $A_{C_n(S,w)}$ is circulant. Also $I_n-A_{C_n(S,w)}^t$ is circulant (with integer entries). Let $S=\{s_1,s_2,\dots,s_k\}$. Then,
$$\det(I_n-A_{C_n(S)}^t)=\det(I_n-A_{C_n(S)})=\prod\limits_{l=0}^{n-1}\left(1-\sum\limits_{s_j\in S}w(s_j)\zeta_n^{ls_j}\right).$$ 

\begin{proposition}\label{prop: det is positive}
	Let $S_0:=\{j\in S\mid j\equiv0\pmod2\}$, $S_1:=\{j\in S\mid j\equiv1\pmod2\}$, $W_0:=\sum_{s_j\in S_0}w(s_j)$, and $W_1:=\sum_{s_j\in S_1}w(s_j)$. Then $\det(I_n-A_{C_n(S,w)}^t)>0$ if and only if $n$ is even and $1+W_1<W_0$.
\end{proposition}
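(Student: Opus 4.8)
The plan is to read off the sign of $\det(I_n-A_{C_n(S,w)}^t)$ directly from the eigenvalue factorization of the circulant matrix. By Lemma~\ref{lemma: eigenvalue of circulant matrix} (and the displayed formula immediately preceding the statement),
$$\det(I_n-A_{C_n(S,w)}^t)=\prod_{l=0}^{n-1}\lambda_l,\qquad \lambda_l:=1-\sum_{s_j\in S}w(s_j)\zeta_n^{ls_j}.$$
The first thing I would record is that these factors occur in complex-conjugate pairs: since $\overline{\zeta_n^{\,m}}=\zeta_n^{-m}=\zeta_n^{\,n-m}$, we have $\lambda_{n-l}=\overline{\lambda_l}$, so $\lambda_l\lambda_{n-l}=|\lambda_l|^2\ge 0$ whenever $l\neq n-l$. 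Hence every factor with $l\notin\{0,n/2\}$ pairs with its conjugate into a nonnegative real number, and the sign of the determinant is governed entirely by the self-conjugate (genuinely real) factors, which occur exactly at the indices $l$ with $2l\equiv0\pmod n$.

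Next I would isolate these real factors. The index $l=0$ always occurs and gives $\lambda_0=1-\sum_{s_j\in S}w(s_j)=1-W$, which is strictly negative because $W\ge 2$. A second self-conjugate index $l=n/2$ occurs precisely when $n$ is even; there $\zeta_n^{n/2}=-1$, so $\zeta_n^{(n/2)s_j}=(-1)^{s_j}$ and
$$\lambda_{n/2}=1-\sum_{s_j\in S}w(s_j)(-1)^{s_j}=1-(W_0-W_1)=1+W_1-W_0,$$
so that $\lambda_{n/2}<0$ if and only if $1+W_1<W_0$. This already delivers the easy halves. When $n$ is odd, $\lambda_0$ is the only real factor, so $\det(I_n-A_{C_n(S,w)}^t)=\lambda_0\prod_{l=1}^{(n-1)/2}|\lambda_l|^2\le 0$ and positivity is impossible; in particular $\det>0$ forces $n$ even. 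When $n$ is even,
$$\det(I_n-A_{C_n(S,w)}^t)=\lambda_0\,\lambda_{n/2}\prod_{l=1}^{n/2-1}|\lambda_l|^2,$$
and since $\lambda_0<0$ while the product over $l$ is nonnegative, a positive determinant forces $\lambda_{n/2}<0$, i.e.\ $1+W_1<W_0$. This gives the ``only if'' direction in full.

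For the converse I would assume $n$ even and $1+W_1<W_0$, so $\lambda_0\lambda_{n/2}>0$, and try to conclude $\det>0$. Here lies the delicate point, and the step I expect to be the main obstacle: the pairing argument controls only the sign of the \emph{nonzero} part of the determinant, so to obtain strict positivity one must separately rule out the vanishing of the remaining conjugate-paired factors, that is, show $\lambda_l\neq 0$ for all $1\le l\le n/2-1$. Otherwise $\prod_{l=1}^{n/2-1}|\lambda_l|^2$ collapses to $0$ and the determinant is $0$ rather than positive. By Lemma~\ref{lemma: circulant matrix is singular}, such a vanishing is equivalent to a nontrivial common factor of $P(x)=1-\sum_{s_j\in S}w(s_j)x^{s_j}$ with $x^n-1$, i.e.\ to $\Phi_d(x)\mid P(x)$ for some divisor $d\mid n$ with $d\ge 3$ (the cases $d=1,2$ being exactly the already-analyzed $\lambda_0,\lambda_{n/2}$). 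I regard establishing this non-vanishing as the crux of the argument: it is genuinely arithmetic rather than sign-theoretic, it is not automatic under the weight hypothesis, and I would expect the honest resolution either to require a careful exclusion of such cyclotomic divisibilities or to force a qualification of the statement to the case where $I_n-A_{C_n(S,w)}^t$ is nonsingular.
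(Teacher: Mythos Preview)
Your approach is essentially identical to the paper's: factor the determinant as $\prod_l\lambda_l$ via Lemma~\ref{lemma: eigenvalue of circulant matrix}, pair $\lambda_l$ with $\lambda_{n-l}=\overline{\lambda_l}$, and reduce the sign question to the real factors $\lambda_0=1-W<0$ and (when $n$ is even) $\lambda_{n/2}=1+W_1-W_0$. The paper's proof is a compressed version of exactly this argument.

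The substantive difference is that you explicitly flag the converse as incomplete, because the conjugate-pair product $\prod_{1\le l<n/2}|\lambda_l|^2$ is only nonnegative, not strictly positive. The paper simply asserts the biconditional ``$\det>0$ iff $n$ even and $z_{n/2}<0$'' without addressing possible vanishing of an intermediate $\lambda_l$. Your caution is warranted, and in fact exposes a genuine gap: the proposition as stated is false. Take $n=6$, $S=\{0,1,2\}$ with $w(0)=2$, $w(1)=w(2)=1$. Then $\langle S\rangle=\mathbb{Z}_6$, $W=4\ge 2$, $W_0=w(0)+w(2)=3$, $W_1=w(1)=1$, so $n$ is even and $1+W_1=2<3=W_0$. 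But the representer polynomial is $P(x)=1-2-x-x^2=-(1+x+x^2)=-\Phi_3(x)$, hence $\lambda_2=P(\zeta_3)=0$ and $\det(I_6-A^t)=0$, not positive. So the paper's proof shares the gap you identified, and the statement needs either the added hypothesis $\det\neq 0$ (equivalently, by Lemma~\ref{lemma: circulant matrix is singular}, $\gcd(P(x),x^n-1)=1$) or a weakening of the conclusion to $\det\ge 0$.
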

\begin{proof}
	Let $P(x)=1-\sum_{s_j}w(s_j)x^{s_j}$ be the representer polynomial of $I_n-A_{C_n(S,w)}^t$. Let $z_l=P(\zeta_n^l)=1-\sum_{s_j}w(s_j)\zeta^{ls_j}$. It is easy to see that $z_0=1-\sum_{s_j\in S}w(s_j)=1-W<0$ and $z_{n-l}=\overline{z_l}$ for all $l$. Thus $\det(I_n-A_{C_n(S)}^t)>0$ if and only if $n$ is even and $z_{\frac{n}{2}}<0$. Since
	$$z_{\frac{n}{2}}=1-\sum\limits_{j\quad even}w(s_j)+\sum\limits_{j\quad odd}w(s_j),$$ Thus  $z_{\frac{n}{2}}<0$ iff $1+W_1<W_0$. 
\end{proof}
\begin{proposition}\label{prop: K_0 is infinite}
	Let $P(x)\in\mathbb{Z}[x]$ be the representer polynomial associated with the circulant matrix $I_n-A_{C_n(S,w)}^t$. Then $K_0(L(C_n(S,w)))$ is infinite if and only if  $P(x)$ and $x^n-1$ are relatively prime.
\end{proposition}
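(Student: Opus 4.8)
The plan is to deduce the proposition from two results already established in the excerpt: the determinant criterion of Proposition \ref{prop: K_0 of LPA} and the circulant singularity test of Lemma \ref{lemma: circulant matrix is singular}. Throughout, the standing hypotheses $\langle S\rangle=\mathbb{Z}_n$ and $W\geq 2$ guarantee, via Theorem \ref{theorem: wCayley is pis iff}, that $L(C_n(S,w))$ is unital purely infinite simple; this is precisely the hypothesis under which Proposition \ref{prop: K_0 of LPA} may be invoked, so the first thing I would do is record that its applicability is automatic here.

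The first substantive step is a pure translation of the left-hand side of the biconditional. By Proposition \ref{prop: K_0 of LPA}, the group $K_0(L(C_n(S,w)))$ is infinite exactly when $\det(I_n-A_{C_n(S,w)}^t)=0$, that is, exactly when the integer matrix $M:=I_n-A_{C_n(S,w)}^t$ is singular as a linear map $\mathbb{Z}^n\to\mathbb{Z}^n$. Thus the whole proposition collapses to a characterization of singularity of $M$ in terms of the representer polynomial $P(x)$.

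The second step is to put $M$ into a shape where Lemma \ref{lemma: circulant matrix is singular} applies verbatim. Here I would note that $A_{C_n(S,w)}$ is circulant, that the transpose of a circulant matrix is again circulant, and that $I_n$ is circulant, so $M$ is a circulant matrix over $\mathbb{Z}$ whose representer polynomial is exactly the $P(x)$ named in the statement. Applying Lemma \ref{lemma: circulant matrix is singular} with $C=M$ then equates singularity of $M$ with the condition that $P(x)$ and $x^n-1$ share a common root among the $n$-th roots of unity, which (using Lemma \ref{lemma: eigenvalue of circulant matrix} and that the roots of $x^n-1$ are precisely $\{\zeta_n^k:0\le k\le n-1\}$) is the same as saying $P(\zeta_n^k)=0$ for some $k$, and hence the same as $\gcd(P(x),x^n-1)$ being nontrivial. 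Chaining this with the first step produces the desired equivalence between infiniteness of $K_0(L(C_n(S,w)))$ and the relative-primality behaviour of $P(x)$ and $x^n-1$.

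The only real obstacle is orientation, and it is the point I would be most careful about when writing up. Part (c) of Lemma \ref{lemma: circulant matrix is singular} attaches singularity of $M$ — equivalently $\det(I_n-A_{C_n(S,w)}^t)=0$, equivalently $K_0$ infinite — to $P(x)$ and $x^n-1$ \emph{failing} to be relatively prime, so that the coprime case is the one in which $\det\neq 0$ and $K_0$ is finite. I would therefore transcribe the final biconditional so that the coprimality clause sits on exactly the side of Lemma \ref{lemma: circulant matrix is singular} from which it is derived, since this clause is easy to invert by accident; beyond this bookkeeping the argument is a direct concatenation of Proposition \ref{prop: K_0 of LPA} and Lemma \ref{lemma: circulant matrix is singular} and requires no further computation.
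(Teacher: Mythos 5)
Your argument is the paper's own: the printed proof is the single line ``Follows from Lemma \ref{lemma: circulant matrix is singular} and Proposition \ref{prop: K_0 of LPA}'', i.e.\ exactly the concatenation you describe, with pure infinite simplicity supplied by the standing assumptions $\langle S\rangle=\mathbb{Z}_n$, $W\geq 2$ via Theorem \ref{theorem: wCayley is pis iff}, just as you note.

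However, the orientation issue you isolate at the end is not mere bookkeeping: you have caught an error in the statement itself. The chain gives $K_0(L(C_n(S,w)))$ infinite $\Leftrightarrow$ $\det(I_n-A_{C_n(S,w)}^t)=0$ (Proposition \ref{prop: K_0 of LPA}) $\Leftrightarrow$ the circulant matrix $I_n-A_{C_n(S,w)}^t$ is singular $\Leftrightarrow$ $P(x)$ and $x^n-1$ are \emph{not} relatively prime (Lemma \ref{lemma: circulant matrix is singular}, part (c)). So the proposition should read ``not relatively prime''; as printed, the biconditional is inverted. A quick sanity check confirming your orientation: take $S=\{0,1\}$, $w(0)=a$, $w(1)=b$ with $a=b+1$ and $n$ even; then $P(x)=1-a-bx=-b(1+x)$ shares the root $-1$ with $x^n-1$, and indeed $\det(I_n-A_{C_n(S,w)}^t)=(1-a)^n-b^n=0$, so $K_0$ is infinite precisely when coprimality \emph{fails}. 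Your write-up proves the corrected statement; the paper's one-line proof silently commits the inversion you were careful to avoid.
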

\begin{proof}
	Follows from Lemma \ref{lemma: circulant matrix is singular} and Proposition \ref{prop: K_0 of LPA}
\end{proof}

In order to compute the Grothendieck group of the Leavitt path algebra of $C_n(S,w)$, we look at the generating relations for $M_{C_n(S,w)}^\ast$
$$[v_i]=\sum\limits_{s_j\in S} w(s_j)[v_{i+s_j}].$$
where $0\leq i\leq n-1$, (subscripts are modulo $n$) and $S=\{s_1,s_2,\dots,s_k\}$, ($s_l<s_m$ for $l<m$). Any statement about $[v_0]$ in $M_{C_n(S,w)}^*$, has an analogous statement for $[v_k]$ for $0\leq k\leq n-1$, by symmetry of relations.

\begin{definition}\label{def: companion matrix}
	The \textit{companion matrix} of the monic polynomial $p(t)=c_0+c_1t+\dots+c_{n-1}t^{n-1}+t^n$, the $n\times n$ matrix defined as
	\[T(p)=\begin{pmatrix}
	0 & 0 & \dots & 0 & -c_0 \\
	1 & 0 & \dots & 0 & -c_1\\
	0 & 1 & \dots & 0 & -c_2\\
	\vdots & \vdots & \ddots & \vdots & \vdots \\
	0 & 0 & \dots & 1 & -c_{n-1} \\
	\end{pmatrix}\]
	
\end{definition}

Let a linear recursive sequence is of the form 
\begin{center}
	$u_{n+k}-c_{n-1}u_{n+k-1}-\dots-c_0u_n=0 \quad(n\geq0),$
\end{center} where $c_0,c_1,\dots,c_{n-1}$ are constants. The  \textit{characteristic polynomial} of the above linear recursive sequence is defined as $p(t)=t^n-c_{n-1}t^{n-1}-\dots-c_1t-c_0$ whose  companion matrix is
\[T(p)=\begin{pmatrix}
0 & 0 & \dots & 0 & c_0 \\
1 & 0 & \dots & 0 & c_1\\
0 & 1 & \dots & 0 & c_2\\
\vdots & \vdots & \ddots & \vdots & \vdots \\
0 & 0 & \dots & 1 & c_{n-1} \\
\end{pmatrix}\]

This matrix generates the sequence in the sense that,
\[
\begin{pmatrix}
a_k & a_{k+1} & \dots & a_{k+n-1} 
\end{pmatrix}
T(p)
=
\begin{pmatrix}
a_{k+1} & a_{k+2} & \dots & a_{k+n} 
\end{pmatrix}
\]

In particular, the $(n,n)^{th}$ entry of $T(p)^k$ is $u_{n+k-2}$.

When $0\notin S$, from the linear recursive relation in $M_{C_n(S,w)}^*$, we have the characteristic polynomial $p(S,w,t)=t^{s_k}-\sum_{s_j\in S}w(s_j)t^{s_k-s_j}$. The companion matrix of $p(S,w,t)$ is denoted by $T_{C_n(S,w)}$, is then the $s_k\times s_k$ matrix
\[T_{C_n(S,w)}=\begin{pmatrix}
0 & 0 & \dots & 0 &  \\
1 & 0 & \dots & 0 & \\
0 & 1 & \dots & 0 & \\
\vdots & \vdots & \ddots & \vdots &  \textbf{c}\\\\
0 & 0 & \dots & 1 &  \\
\end{pmatrix}\]
where \textbf{c} is the last column of $T_{C_n(S,w)}$ which contains entry $w(s_j)$ at positions $s_k-s_j+1$ and 0 elsewhere.

In $M_{C_n(S,w)}^*$, we observe that by writing the generating relations and then expanding the equation such that the subscripts are kept in \textit{increasing order}, at $i^{th}$ step we get the coefficients to be the last column of $T_{C_n(S,w)}^i$. 

The computation of the Smith Normal Form of $I_n-A_{C_n(S,w)}^t$ is the key tool for determining the $K_0$ of the Leavitt path algebra of $C_n(S,w)$. We show that this computation reduces to calculating the Smith Normal Form of an $s_k\times s_k$ matrix. 

\begin{theorem}\label{Coker}
 Let $n\in\mathbb{N}, S=\{s_1,s_2,\dots,s_k\}\subsetneq\mathbb{Z}_n$ such that $\langle S\rangle=\mathbb{Z}_n, 0\notin S, $ and $W\geq2$. Then \emph{Coker}$(I_n-A_{C_n(S,w)}^t)\cong \emph{Coker}(T_{C_n(S,w)}^n-I_n)$.
\end{theorem}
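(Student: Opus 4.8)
The plan is to realise both cokernels as one and the same quotient ring,
\[
R \;:=\; \mathbb{Z}[x]/\bigl(x^n-1,\ p(x)\bigr),
\]
where $p(x)=p(S,w,x)=x^{s_k}-\sum_{s_j\in S}w(s_j)x^{s_k-s_j}$ is the characteristic polynomial introduced before the theorem, and $T_{C_n(S,w)}^{n}-I_{s_k}$ denotes the $s_k\times s_k$ companion power (the identity matrix having size $s_k$, to match $T_{C_n(S,w)}$). The guiding principle is that both a circulant matrix and a companion matrix are ``multiplication by a single element'' on a natural cyclic module, so the whole content is to identify the two modules with $R$.

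For the circulant side I would use the isomorphism $\mathbb{Z}^n\cong\mathbb{Z}[x]/(x^n-1)$ sending the $i$th standard basis vector to $x^i$, under which every $n\times n$ circulant matrix acts as multiplication by its representer polynomial (evaluated at $x^{-1}$, once the convention $C_{ij}=c_{j-i}$ of Definition \ref{def: circulant matrix} is tracked). Since $\mathrm{Coker}(M)\cong\mathrm{Coker}(M^t)$ for any integer matrix (transposition preserves the Smith Normal Form), I may replace $I_n-A_{C_n(S,w)}^t$ by $I_n-A_{C_n(S,w)}$, whose representer polynomial is $P(x)=1-\sum_{s_j\in S}w(s_j)x^{s_j}$; hence its cokernel is $\mathbb{Z}[x]/(x^n-1,\ P(x^{-1}))$. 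The elementary but decisive observation is that $p(x)=x^{s_k}P(x^{-1})$ and that $x^{s_k}$ is a unit in $\mathbb{Z}[x]/(x^n-1)$, so $(P(x^{-1}))=(p(x))$ and the circulant cokernel is exactly $R$. Here the hypothesis $0\notin S$ enters, guaranteeing $P(0)=1\neq0$ so that $p$ is genuinely monic of degree $s_k$ and the companion matrix has the stated size.

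For the companion side I would invoke the classical module picture: $\mathbb{Z}^{s_k}\cong\mathbb{Z}[t]/(p(t))$ with basis $1,t,\dots,t^{s_k-1}$, under which $T_{C_n(S,w)}$ is precisely multiplication by $t$ (this is exactly the last-column description of $T_{C_n(S,w)}$ recorded before the theorem, read off in this basis). Consequently $T_{C_n(S,w)}^{n}-I_{s_k}$ becomes multiplication by $t^n-1$, and therefore
\[
\mathrm{Coker}\bigl(T_{C_n(S,w)}^{n}-I_{s_k}\bigr)\;\cong\;\frac{\mathbb{Z}[t]/(p(t))}{(t^n-1)\,\mathbb{Z}[t]/(p(t))}\;=\;\mathbb{Z}[t]/\bigl(p(t),\ t^n-1\bigr)\;=\;R.
\]
Chaining the two computations yields $\mathrm{Coker}(I_n-A_{C_n(S,w)}^t)\cong R\cong\mathrm{Coker}(T_{C_n(S,w)}^{n}-I_{s_k})$.

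The main obstacle is not a single hard step but the bookkeeping that makes the two ``multiplication'' descriptions align: one must fix the correct orientation (the reciprocal-polynomial switch $P(x)\leftrightarrow p(x)$ coming from $x\mapsto x^{-1}$) and verify that the paper's $T_{C_n(S,w)}$, whose layout is the transpose of the textbook companion matrix, genuinely implements multiplication by $t$ on $\mathbb{Z}[t]/(p(t))$. Each check is routine in isolation, but reversing either convention would silently break the isomorphism. A more hands-on alternative avoids the group-ring language: present $\mathrm{Coker}(I_n-A_{C_n(S,w)}^t)$ by the relations $[v_i]=\sum_{s_j\in S}w(s_j)[v_{i+s_j}]$ and use the recurrence to eliminate generators $s_k$ at a time, tracking that after $i$ eliminations the coefficients are the last column of $T_{C_n(S,w)}^{i}$; after $n$ steps the cyclic indexing forces precisely the relations encoded by $T_{C_n(S,w)}^{n}-I_{s_k}$. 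I would present the quotient-ring argument as the primary proof and keep the elimination argument as a concrete cross-check.
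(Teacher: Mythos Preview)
Your argument is correct and proceeds along a genuinely different route from the paper. The paper carries out an explicit Smith-normal-form style reduction on $B=A_{C_n(S,w)}-I_n$: it introduces auxiliary triangular $s_k\times s_k$ matrices $P$ and $Q$, shows that the first $n-s_k$ column reductions replace the lower-right block by $T^{n-s_k}P+Q$, and then right-multiplies by $R=-Q^{-1}$ (using $PR=T^{s_k}$, $QR=-I_{s_k}$) to obtain $T^n-I_{s_k}$. Your proof instead bypasses all matrix manipulation by identifying both cokernels with the single quotient ring $\mathbb{Z}[x]/(x^n-1,\,p(x))$: the circulant side via the standard isomorphism $\mathbb{Z}^n\cong\mathbb{Z}[x]/(x^n-1)$ together with the unit-rescaling $p(x)=x^{s_k}P(x^{-1})$, and the companion side via $\mathbb{Z}^{s_k}\cong\mathbb{Z}[t]/(p(t))$ on which $T_{C_n(S,w)}$ is multiplication by $t$.

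Your approach is more conceptual and shorter, and it explains \emph{why} the companion power appears rather than discovering it through a calculation; it also makes clear exactly where the hypothesis $0\notin S$ enters (so that $p$ is monic of degree $s_k$ and $x^{s_k}$ is merely a unit factor). The paper's approach, by contrast, is constructive: it exhibits explicit invertible integer matrices realising the equivalence, which is useful if one later needs to track the image of a specific generator (as the paper in fact does elsewhere when locating $\sum[v_i]$ inside $K_0$). Your secondary ``elimination'' sketch is essentially the paper's argument in disguise, so keeping it as a cross-check is reasonable. One small remark: the paper's $T_{C_n(S,w)}$ (subdiagonal $1$'s, last column carrying the coefficients) is already the matrix of multiplication by $t$ acting on column vectors in the basis $1,t,\dots,t^{s_k-1}$, so no transpose adjustment is actually required there.
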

\begin{proof}
	Since the Smith normal form of $I_n-A_{C_n(S,w)}^t$ and $A_{C_n(S,w)}-I_n$ are the same, their cokernels are same and we only show that Coker$(A_{C_n(S,w)}-I_n)\cong$ Coker$(T_{C_n(S,w)}^n-I_n)$. For simplicity we write $B=A_{C_n(S,w)}-I_n$ and $T=T_{C_n(S,w)}$. First we observe that 
	\begin{center}
		\[   
		B_{pq} = 
		\begin{cases}
		-1 &\quad\text{if}\quad q=p\\
		w(s_j) &\quad\text{if}\quad q=p+s_j\\
		0 &\quad\text{otherwise}\\
		\end{cases}
		\]
	\end{center}
Let $P$ be a $(s_k\times s_k)$ lower triangular matrix given by
\begin{center}
	\[   
	P_{pq} = 
	\begin{cases}
	0 &\quad\text{if}\quad q>p\\
	w(s_j) &\quad\text{if}\quad p-q=s_k-s_j\\
	0 &\quad\text{otherwise}\\
	\end{cases}
	\]
\end{center}
and let $Q$ be a $(s_k\times s_k)$ upper triangular matrix given by
\begin{center}
	\[   
	Q_{pq} = 
	\begin{cases}
	0 &\quad\text{if}\quad q<p\\
	-1 &\quad\text{if}\quad q=p\\
	w(s_j) &\quad\text{if}\quad q-p=s_j\\
	0 &\quad\text{otherwise}\\
	\end{cases}
	\]
\end{center}

It is direct that $P$ and $Q$ are invertible. Let $R=-Q^{-1}$. Then a direct computation yields $PR=T^{s_k}$, and also $QR=-I_{s_k}$. Let $P^\prime$ be the block matrix $\left[P\mid 0_{s_k\times(n-s_k)}\right]$ and $Q^\prime$ be the block matrix $\left[0_{s_k\times(n-s_k)}\mid Q \right]$. The $(s_k\times(n-s_k))$ submatrix of $B$ consisting of bottom $s-k$ rows can be written as $P^\prime+Q^\prime$.

The first $(n-s_k)$ reduction steps of the Smith normal form will result in an $(n-s_k)\times(n-s_k)$ identity submatrix in the upper left corner. On the bottom $s_k$ rows, the $i^{th}$ reduction step adds the $i^{th}$ column to the sum of $w(s_j)$ times $(i+s_k)^{th}$ columns, then zeros out the $i^{th}$ column. The matrix that accomplishes this reduction step is
\[P^{-1}TP=\begin{pmatrix}
 &  & \textbf{r} &  &  &   \\
1 & 0 & 0 & \dots & 0 & 0\\
0 & 1 & 0 & \dots & 0 & 0\\
\vdots & \vdots & \vdots & \ddots & \vdots & \vdots\\
0 & 0 & 0 & \dots & 1 & 0\\
\end{pmatrix}\]
where \textbf{r} is the first row contains entry $w(s_j)$ at positions $s_k$ and $0$ elsewhere. After $i$ reduction steps, the first $(s_k\times s_k)$ submatrix with nonzero column vectors on the bottom $s_k$ rows will be
$$P\cdot(P^{-1}TP)^i=T^iP.$$

Therefore the first $(n-s_k)$ reduction steps of the Smith Normal Form will result in the following form.
	\[B\sim\begin{pmatrix}		
	I_{(n-s_k)} &  0_{(n-s_k)\times s_k} & \\
	
	0_{s_k\times(n-s_k)} &  T^{n-s_k}P+Q
	\end{pmatrix}
	\]
Since $(T^{n-s_k}P+Q)R=T^{n}-I_{s_k}$,
\[B\sim\begin{pmatrix}		
I_{(n-s_k)} &  0_{(n-s_k)\times s_k} & \\

0_{k\times(n-s_k)} &  T^{n}-I_{s_k}
\end{pmatrix}
\]	
Hence Coker$(B)\cong$Coker$(T^n-I_{s_k})$.\\

\end{proof} 

\section{Illustrations}
As illustrations of the above discussion we consider some simple cases when $W\geq 2$ and $\sum\limits_{i=0}^{n-1}[v_i]$ is the identity in $M_{C_n(S,w)}^*$, which recovers the examples obtained in  \cite{MR3307385},\cite{MR3456905},and \cite{MR3854343}.

\subsection{$S=\mathbb{Z}_n$}\hfill\\

In this subsection we only look at the following two simple cases when $\sum\limits_{i=0}^{n-1}[v_i]$ is the identity in $M_{C_n(\mathbb{Z}_n,w)}^\ast$

\begin{definition}\label{def:K_n}
	Let $n,l$ be two positive integers. We define $K_n^{(l)}$ to be the graph with $n$ vertices $v_0,v_1,\dots v_{n-1}$,  in which there is exactly one edge from $v_i$ to $v_j$ for each $0\leq i\neq j\leq n-1$ and $l$ loops at each vertex. We call $K_n^{(l)}$ the \textbf{complete $n$-graph with $l$ loops}.
\end{definition}

\begin{theorem}\label{theorem: complete graph}
Let $n\geq 2$ be a positive integer.
    \begin{enumerate}
        \item  $L(K_n^{(1)})\cong L(1,n)$.
        \item Let $E$ be a finite graph such that $L(E)$ is purely infinite simple. If $K_0(L(E))\cong\mathbb{Z}^n$ and $[L(E)]$ is identity in $K_0(L(E))$, then $L(E)\cong L(K_{n+1}^{(2)})$.
        \end{enumerate}
\end{theorem}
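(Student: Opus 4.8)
The plan is to derive both parts from the Algebraic KP Theorem (Theorem~\ref{theorem: KP}). In each part I must (i) exhibit the target graph's Leavitt path algebra as purely infinite simple, (ii) compute its Grothendieck group together with the class $[L(-)]=\sum_i[v_i]$ of the regular module, and (iii) check that the two determinants $\det(I-A^t)$ share a sign; the theorem then supplies the desired $K$-algebra isomorphism.

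For part (1) I would first note that $K_n^{(1)}=C_n(\mathbb{Z}_n,w)$ with $w\equiv 1$, so its adjacency matrix is the all-ones matrix $J_n$; since $W=n\geq 2$, Theorem~\ref{theorem: wCayley is pis iff} shows $L(K_n^{(1)})$ is purely infinite simple. Next I compute the cokernel via Proposition~\ref{prop: K0 is cokernal}: the monoid relation $[v_i]=\sum_{j=0}^{n-1}[v_j]=\sigma$ holds for every $i$, so all classes coincide with $\sigma:=\sum_i[v_i]$, and then $\sigma=n\sigma$ gives $(n-1)\sigma=0$. Since $|\det(I_n-J_n)|=n-1$, Proposition~\ref{prop: K_0 of LPA} forces $K_0(L(K_n^{(1)}))\cong\mathbb{Z}_{n-1}$, cyclic with distinguished generator $[L(K_n^{(1)})]=\sigma$. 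Recalling that $K_0(L(1,n))=K_0(L(R_n))\cong\mathbb{Z}_{n-1}$ with $[L(R_n)]$ corresponding to $1$, the assignment $\sigma\mapsto 1$ is an isomorphism carrying $[L(K_n^{(1)})]$ to $[L(R_n)]$. Finally $\det(I_n-J_n)=-(n-1)$ and $\det(I_1-A_{R_n}^t)=1-n=-(n-1)$ are both negative, so the sign hypothesis of Theorem~\ref{theorem: KP} is met and $L(K_n^{(1)})\cong L(1,n)$.

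For part (2) the crucial computation is $K_0(L(K_{n+1}^{(2)}))$. Its adjacency matrix is $A=J_{n+1}+I_{n+1}$ (the two loops give diagonal $2$, the single edges give off-diagonal $1$), hence $I_{n+1}-A^t=-J_{n+1}$, whose cokernel is $\mathbb{Z}^{n+1}/\mathbb{Z}\mathbf{1}\cong\mathbb{Z}^n$ because $\mathbf{1}$ is a primitive vector. Decisively, $[L(K_{n+1}^{(2)})]=\sum_i[v_i]$ is the class of $\mathbf{1}$, which is $0$ in this quotient, so the distinguished element is the identity of $K_0$; and $W=n+2\geq 2$ makes $L(K_{n+1}^{(2)})$ purely infinite simple. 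Now, given $E$ with $K_0(L(E))\cong\mathbb{Z}^n$ and $[L(E)]$ the identity, any composite of isomorphisms furnishes $\varphi\colon K_0(L(E))\to K_0(L(K_{n+1}^{(2)}))$, and since a group isomorphism preserves the identity and both distinguished classes are the identity, $\varphi([L(E)])=[L(K_{n+1}^{(2)})]$ holds for free. Both groups being infinite, Proposition~\ref{prop: K_0 of LPA} gives $\det(I_{|E^0|}-A_E^t)=0=\det(I_{n+1}-A_{K_{n+1}^{(2)}}^t)$, so the signs agree trivially, and Theorem~\ref{theorem: KP} yields $L(E)\cong L(K_{n+1}^{(2)})$.

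The main obstacle is the two cokernel computations together with correctly tracking the regular-module class $\sum_i[v_i]$ through the isomorphisms. In part (1) the delicate point is verifying this class lands on the generator $1$, which rests on $n\equiv 1\pmod{n-1}$; in part (2) the entire argument hinges on recognizing that this class is the identity element, which is precisely why the hypothesis ``$[L(E)]$ is the identity'' is exactly what is needed for the KP isomorphism to exist with no additional constraint. The degenerate sign comparison in part (2), where both determinants vanish, also warrants an explicit remark that equal (zero) determinants count as having the same sign.
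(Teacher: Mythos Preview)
Your proposal is correct and follows essentially the same route as the paper: both parts are reduced to the Algebraic KP Theorem after computing $K_0$ and the determinant for $K_n^{(1)}$ and $K_{n+1}^{(2)}$, and tracking the class $\sum_i[v_i]$. The only cosmetic difference is that for part~(1) the paper invokes Proposition~\ref{prop: Leavitt algebra} (itself a packaged consequence of KP) rather than comparing directly with $R_n$, and it computes the Smith Normal Form via determinant divisors rather than reading the cokernel off from the monoid relations as you do; your argument that every $[v_i]=\sigma$, hence $\sigma$ generates the cyclic group of order $n-1$, is in fact cleaner than the paper's somewhat elliptical appeal to Theorem~\ref{theorem: order of identity in K_0}.
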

\begin{proof}
Let $w_l:S\rightarrow \mathbb{N}$ be the weight function defined by $w_l(0)=l$ and $w_l(i)=1$ for $1\leq i\leq n-1$. Then it is direct that  $C_n(\mathbb{Z}_n,w_l)\cong K_n^{(l)}$.
1. We note that 
$$\det(I_n-A_{K_n^{(1)}}^t)=\prod\limits_{l=0}^{n-1}(-1)(\sum\limits_{j=1}^{n-1}\zeta^{lj})=-(n-1)<0.$$
Also we have $W-1=|S|-1=n-1$. So $\gcd(W-1,n)=1$ and hence $\sum\limits_{i=0}^{n-1}[v_i]$ is the identity in $M_{K_n^{(1)}}^\ast$. Also determinant divisors theorem yields that 
$$\text{SNF}(I_n-A_{K_n^{(1)}}^t)=\text{diag}(1,1,\dots,1,n-1).$$
Hence,
$K_0(L(K_n^{(1)}))\cong\mathbb{Z}_{n-1}.$
By Proposition \ref{prop: Leavitt algebra}, the result follows.

2. We note that $(I_n-A_{K_n^{(2)}}^t)$ is the $n\times n$ matrix with every entry $-1$. Hence $\det(I_n-A_{K_n^{(2)}}^t)=0$ and rank($I_n-A_{K_n^{(2)}}^t)=1$. Therefore if $n\geq2$, then $$K_0(L(K_n^{(2)}))\cong\mathbb{Z}^{n-1}.$$
Also in $K_0(L(K_n^{(2)}))$, $$\sigma=\sum\limits_{i=0}^{n-1}[v_i]=[v_0]+\sum\limits_{i=1}^{n-1}[v_i]=\left( 2[v_0]+\sum\limits_{i=1}^{n-1}[v_i]\right) +\sum\limits_{i=1}^{n-1}[v_i]=2\sum\limits_{i=0}^{n-1}[v_i]=2\sigma.$$
Hence $\sum\limits_{i=0}^{n-1}[v_i]$ is the identity in $K_0(L(K_n^{(2)}))$.
Applying Algebraic KP Theorem we have the result.
\end{proof}

\subsection{$|S|=1$}\hfill\\

Let $S=\{i\}$. Since $\left\langle S\right\rangle =\mathbb{Z}_n$, $\gcd(i,n)=1$ and the weight function $w:S\rightarrow\mathbb{N}$ is given by $w(i)=W$. Let $D_n^k$ be the graph with $n$ vertices $v_0, v_1, \dots, v_n$ and $kn$ edges such that every vertex $v_i$ emit $k$ edges to $v_{i+1}$. We call $D_n^k$ an $k$-cycle of length $n$. 

\begin{center}
	\begin{tikzpicture}
	[->,>=stealth',shorten >=1pt,thick]

	\draw [fill=black] (0:1.5)		circle [radius=0.08];
	%\tikzset{shift={(0,-1)}}
	\draw [fill=black] (60:1.5)		circle [radius=0.08];
	\draw [fill=black] (120:1.5)		circle [radius=0.08];
	\draw [fill=black] (180:1.5)		circle [radius=0.08];
	\draw [fill=black] (-60:1.5)		circle [radius=0.08];
	\draw [fill=black] (230:1.5)		circle [radius=0.03];
	\draw [fill=black] (240:1.5)		circle [radius=0.03];
	\draw [fill=black] (250:1.5)		circle [radius=0.03];
	
	\path[->]	(110:1.5)	edge[bend left=20] 	node	{}		(70:1.5);
	\path[->]	(50:1.5)	edge[bend left=20] 	node	{}		(10:1.5);
	\path[->]	(-10:1.5)	edge[bend left=20] 	node	{}		(-50:1.5);	
	\path[->]	(-70:1.5)	edge[bend left=20] 	node	{}		(260:1.5);
	\path[->]	(220:1.5)	edge[bend left=20] 	node	{}		(190:1.5);
	\path[->]	(170:1.5)	edge[bend left=20] 	node	{}		(130:1.5);

	\node at (60:2) {$v_0$};
	\node at (0:2) {$v_1$};
	\node at (-60:2) {$v_2$};
	\node at (120:2) {$v_{n-1}$};
	\node at (180:2.1) {$v_{n-2}$};
	\node at (-3,1) {$D_n^k =$};
	\node at (90:1.9) {$(k)$};
	\node at (30:1.9) {$(k)$};
	\node at (150:1.9) {$(k)$};
	\node at (-30:1.9) {$(k)$};
	\node at (-80:1.9) {$(k)$};
	\node at (-150:1.9) {$(k)$};

	\end{tikzpicture}
\end{center}
It is easy to see that $C_n(S,w)\cong D_n^W$.

The generating relations for $M_{D_n^W}^\ast$ are given by  $$[v_i]=W[v_{i+1}]$$ for $0\leq i\leq n$, where the subscripts are interpreted mod $n$. So for each $0\leq i\leq n$ we have that $$[v_i]=W[v_{i+1}]=W^2[v_{i+2}]=\dots =W^{n-i}[v_{n-1}]=W^{n+1-i}[v_0].$$ In particular, each $[v_i]$ is in the subgroup of $M_{D_n^W}^\ast$ generated by $[v_0]$. Since the set $\{[v_i]\mid 0\leq i\leq n-1\}$ generates $M_{D_n^W}^\ast$, we conclude that $M_{D_n^W}^\ast$ is cyclic, and $[v_0]$ is a generator. 

We also observe that $$\det(I_n-A_{D_n^W}^t)=\prod\limits_{l=0}^{n-1}(1-W\zeta^l)=1-W^n<0.$$
We conclude that $|K_0(L(D_n^W))|=W^n-1$. Thus we have $$K_0(L(C_n(S,w)) \cong M_{D_n^W}^\ast \cong\mathbb{Z}_{W^n-1}.$$

\begin{proposition}\label{prop: S is singleton}
	Let $S=\{i\},\gcd(i,n)=1,$ and $\gcd(W-1,n)=1$, then  $$L(C_n(S,w))\cong M_{W^n-1}(L(1,W^n)).$$ 
\end{proposition}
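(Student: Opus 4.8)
The plan is to deduce everything from Proposition \ref{prop: Leavitt algebra}, since the discussion preceding the statement has already identified $C_n(S,w)\cong D_n^W$ and assembled almost all of the data that proposition needs. First I would record the routine inputs. Because $\langle S\rangle=\mathbb{Z}_n$ and $W\ge 2$, Theorem \ref{theorem: wCayley is pis iff} gives that $L(D_n^W)$ is unital purely infinite simple. The text has shown that $M_{D_n^W}^\ast$ is cyclic with generator $[v_0]$ and has order $W^n-1$, so $M_{D_n^W}^\ast\cong\mathbb{Z}_{W^n-1}$; thus in the language of Proposition \ref{prop: Leavitt algebra} I would set $m=W^n$, so that $m-1=W^n-1$. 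Finally $\det(I_n-A_{D_n^W}^t)=1-W^n<0$, which is $\le 0$ as required. All three hypotheses of Proposition \ref{prop: Leavitt algebra} are then in place except for the location of the distinguished class.

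The remaining ingredient, and the crux of the whole argument, is to pin down where the class $[L(E)]=\sum_{i=0}^{n-1}[v_i]$ sits inside $\mathbb{Z}_{W^n-1}$, since under Proposition \ref{prop: Leavitt algebra} the integer $d$ to which this class maps is precisely the size of the matrix ring. To compute it I would push every generator onto $[v_0]$ using the defining relations $[v_i]=W[v_{i+1}]$: iterating backwards from $v_{n-1}$ (where $[v_{n-1}]=W[v_0]$) gives $[v_j]=W^{\,n-j}[v_0]$ for $1\le j\le n-1$, whence
$$\sum_{i=0}^{n-1}[v_i]=\Bigl(1+\sum_{j=1}^{n-1}W^{\,n-j}\Bigr)[v_0]=\Bigl(\sum_{m=0}^{n-1}W^{m}\Bigr)[v_0]=\frac{W^{n}-1}{W-1}\,[v_0]$$
in $M_{D_n^W}^\ast\cong\mathbb{Z}_{W^n-1}$. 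This is where I expect the real work to lie: the reduction modulo $W^n-1$ must be handled carefully, and it is here that the hypothesis $\gcd(W-1,n)=1$ is meant to enter. Via Theorem \ref{theorem: order of identity in K_0} that hypothesis forces the order of $\sum_i[v_i]$ to equal $W-1$, which is a useful consistency check, since the class $\tfrac{W^n-1}{W-1}[v_0]$ indeed has order $W-1$ in $\mathbb{Z}_{W^n-1}$.

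With $d$ in hand the conclusion is immediate: Proposition \ref{prop: Leavitt algebra} applies with $m=W^n$ and this $d$, yielding $L(C_n(S,w))=L(D_n^W)\cong M_d(L(1,W^n))$. The one point I would want to verify before closing is the exact value of the index. The computation above returns $d=\tfrac{W^n-1}{W-1}=1+W+\cdots+W^{n-1}$, and since $\sum_i[v_i]$ has order $W-1$ (not $1$), for $W\ge 3$ this class cannot coincide with $0\equiv W^n-1$ in $\mathbb{Z}_{W^n-1}$, nor can any automorphism of $\mathbb{Z}_{W^n-1}$ carry it there. I would therefore re-examine whether the intended matrix size is $\tfrac{W^n-1}{W-1}$ rather than $W^n-1$; the two agree exactly when $W=2$, which is consistent with the section's framing that $\sum_i[v_i]$ be ``the identity,'' but for general $W$ the natural outcome of the method is $M_{(W^n-1)/(W-1)}(L(1,W^n))$.
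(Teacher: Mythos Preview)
Your approach is exactly the paper's: assemble the inputs from the preceding discussion and invoke Proposition~\ref{prop: Leavitt algebra}. The difference is that you actually compute the index $d$, whereas the paper's one-line proof simply asserts that $\sum_{i=0}^{n-1}[v_i]$ is the identity in $M_{C_n(S,w)}^\ast$ and concludes.

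Your computation is correct, and it exposes an error in the stated Proposition. From $[v_j]=W^{n-j}[v_0]$ one gets
\[
\sum_{i=0}^{n-1}[v_i]=(1+W+\cdots+W^{n-1})[v_0]=\frac{W^n-1}{W-1}\,[v_0]
\]
in $M_{D_n^W}^\ast\cong\mathbb{Z}_{W^n-1}$, an element of order exactly $W-1$ (in agreement with Theorem~\ref{theorem: order of identity in K_0} under the hypothesis $\gcd(W-1,n)=1$). This element is the identity only when $W=2$; for $W\ge 3$ no automorphism of $\mathbb{Z}_{W^n-1}$ can carry it to $0\equiv W^n-1$, since automorphisms preserve order. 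Hence Proposition~\ref{prop: Leavitt algebra} yields $L(C_n(S,w))\cong M_{(W^n-1)/(W-1)}(L(1,W^n))$, not $M_{W^n-1}(L(1,W^n))$. The two agree precisely when $W=2$, which is why the subsequent Corollary (the $W=2$ case, recovering \cite[Proposition~3.4]{MR3456905}) is correct even though the general statement, as written, is not.
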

\begin{proof}
	$\sum_{i=0}^{n-1}[v_i]$ is the identity in the group $M_{C_n(S,w)}^\ast$. Hence by Proposition \ref{prop: Leavitt algebra} the result follows.
\end{proof}
\begin{corollary}(\cite{MR3456905}, Proposition 3.4)
	Assume the hypothesis of Proposition \ref{prop: S is singleton} and $W=2$, then $L(C_n(S,w))\cong M_{2^n-1}(L(1,2^n))$
\end{corollary}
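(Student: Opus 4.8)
The plan is to obtain this statement as an immediate specialization of Proposition \ref{prop: S is singleton}. Under the standing assumptions $S = \{i\}$ and $\gcd(i,n) = 1$, the corollary simply fixes $W = 2$, so the entire content is to verify that Proposition \ref{prop: S is singleton} applies and then to substitute.

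First I would check that every hypothesis of Proposition \ref{prop: S is singleton} is satisfied. The conditions $S = \{i\}$ and $\gcd(i,n) = 1$ are inherited verbatim. The only remaining hypothesis is $\gcd(W-1, n) = 1$; with $W = 2$ this reads $\gcd(1, n) = 1$, which holds for every positive integer $n$. Hence the one genuinely restrictive hypothesis of the general proposition becomes vacuous in this regime, and no side condition on $n$ survives.

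With the hypotheses in place, I would substitute $W = 2$ into the conclusion $L(C_n(S,w)) \cong M_{W^n - 1}(L(1, W^n))$ of Proposition \ref{prop: S is singleton}, which yields $L(C_n(S,w)) \cong M_{2^n - 1}(L(1, 2^n))$ at once. There is no real obstacle here: the corollary is a direct instance of the preceding proposition, and the single verification — that the coprimality condition $\gcd(W-1,n)=1$ collapses to a tautology when $W=2$ — is immediate.
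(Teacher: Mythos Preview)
Your proposal is correct and matches the paper's approach exactly: the corollary is stated without proof in the paper, as it is an immediate specialization of Proposition~\ref{prop: S is singleton} obtained by setting $W=2$ and noting that $\gcd(W-1,n)=\gcd(1,n)=1$ holds automatically.
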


\subsection{$|S|=2$}\hfill\\

Let $S=\{s_1,s_2\}$ with $s_1<s_2$. Let $a,b\in\mathbb{N}$. We define $w(s_1):=a$ and $w(s_2):=b$. Thus $W=a+b\geq 2$. Since $\left\langle S\right\rangle =\mathbb{Z}_n$, it is sufficient to consider only the following subcases: 
\begin{enumerate}
	\item $s_1=0$ and $s_2=1$.
	\item $s_1=1$.
	\item $s_1$ and $s_2$ divide $n$ with $1<s_1<s_2$,  and $\gcd(s_1,s_2)=1$.
\end{enumerate}
In what follows we consider these subcases separately.

\begin{lemma}\label{lemma: sigma is identity when S has two elements}
	In each of the above subcases if $a=b=1$, then $\sum\limits_{i=0}^{n-1}[v_i]$ is the identity in $M_{C_n(S,w)}^*$.
\end{lemma}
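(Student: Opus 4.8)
The plan is to deduce the lemma immediately from Theorem \ref{theorem: order of identity in K_0}. First I would check that the hypotheses of that theorem hold uniformly across all three subcases, so that no case distinction is actually required. By the standing assumption we have $\langle S\rangle=\mathbb{Z}_n$, and since $a=b=1$ the total weight is $W=w(s_1)+w(s_2)=a+b=2\geq2$. In particular $L(C_n(S,w))$ is purely infinite simple by Theorem \ref{theorem: wCayley is pis iff}, so $M_{C_n(S,w)}^*$ really is a group and the statement is meaningful.

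Next I would invoke the first part of Theorem \ref{theorem: order of identity in K_0}, which asserts that the order of $\sigma:=\sum_{i=0}^{n-1}[v_i]$ in $M_{C_n(S,w)}^*$ divides $W-1$. Substituting $W=2$ gives $W-1=1$, so the order of $\sigma$ divides $1$; an element whose order divides $1$ is the identity, hence $\sigma$ is the identity in $M_{C_n(S,w)}^*$. This argument applies verbatim to each of subcases $(1)$, $(2)$, $(3)$, since the only feature it uses is the value $W=2$, not the particular $s_1,s_2$.

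The main point to flag is that there is essentially no obstacle here: the subcase decomposition in the surrounding discussion is needed for the later, finer computations of the Smith Normal Form and of $K_0$, but the present lemma is insensitive to it. If one preferred a self-contained verification bypassing Theorem \ref{theorem: order of identity in K_0}, one could instead specialize the computation in its proof to $W=2$: summing the defining relations $[v_i]=[v_{i+s_1}]+[v_{i+s_2}]$ over $0\leq i\leq n-1$ and reindexing each resulting sum (using that translation by a fixed $s_j$ permutes the index set $\mathbb{Z}_n$) yields $\sigma=\sigma+\sigma=2\sigma$, whence $\sigma=0$. Both routes are one line; I would present the deduction from Theorem \ref{theorem: order of identity in K_0} as the cleanest.
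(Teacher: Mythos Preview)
Your proposal is correct and matches the paper's own proof: both deduce the lemma directly from Theorem~\ref{theorem: order of identity in K_0} using $W-1=1$. The paper phrases it via the condition $\gcd(W-1,n)=\gcd(1,n)=1$, while you invoke the divisibility clause alone; these are equivalent here and the argument is the same.
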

\begin{proof}
	Since $W-1=|S|-1=1$ in these subcases we have $\gcd(W-1,n)=1$ and the result follows from Theorem \ref{theorem: order of identity in K_0}.
\end{proof}

\begin{proposition}\label{prop:det(a,b)=0}
	Let $n,a,b\in\mathbb{N}$ be fixed. Let $0\leq s_1<s_2\leq n-1$. Consider the w-Cayley graph $C_n(S,w)$ where $S=\{s_1,s_2\}$ and $w(s_1)=a,w(s_2)=b$. Then $\emph{det}(I_n-A_{C_n(S,w)}^t)=0$ if and only if exactly one of the following occurs:	
	\begin{enumerate}
		\item $a=b=1, n\equiv0\pmod6, s_2\equiv5s_1\pmod6$.
		\item $a=b+1$, $n$ is even, $s_1$ is even, $s_2$ is odd.
		\item $b=a+1$, $n$ is even, $s_1$ is odd, $s_2$ is even.
	\end{enumerate}
\end{proposition}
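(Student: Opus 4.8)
The plan is to begin from the circulant eigenvalue formula. Since $I_n-A_{C_n(S,w)}^t$ is circulant with representer polynomial $P(x)=1-ax^{s_1}-bx^{s_2}$, Lemma \ref{lemma: eigenvalue of circulant matrix} gives $\det(I_n-A_{C_n(S,w)}^t)=\prod_{l=0}^{n-1}P(\zeta_n^l)$, and by Lemma \ref{lemma: circulant matrix is singular} this vanishes precisely when $P(\omega)=0$ for some $n$-th root of unity $\omega$; that is, when there exists an $n$-th root of unity $\omega$ with $a\omega^{s_1}+b\omega^{s_2}=1$. The whole argument then reduces to deciding for which $(n,a,b,s_1,s_2)$ this single equation is solvable over the $n$-th roots of unity, all the while using the standing hypothesis $\langle S\rangle=\mathbb{Z}_n$, i.e. $\gcd(s_1,s_2,n)=1$.

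For necessity I would first feed the equation into the triangle inequality. Writing $z=\omega^{s_1}$ and $w=\omega^{s_2}$ (both on the unit circle), $1=|az+bw|\ge|a-b|$ forces $|a-b|\le1$, so either $a=b$ or $|a-b|=1$; this coarse trichotomy is exactly what produces the three cases. Next I would run the equality analysis to pin down $z$ and $w$. When $a=b+1$ (resp.\ $b=a+1$), equality in the reverse triangle inequality $|az+bw|\ge a-b=1$ forces $az$ and $bw$ to be antiparallel, hence $w=-z$, and substituting back gives $(z,w)=(1,-1)$ (resp.\ $(-1,1)$). When $a=b$ the equation reads $z+w=1/a$, a positive real; two unit vectors sum to a nonzero real only when they are complex conjugates, so $w=\bar z$ and $z+\bar z=1/a$. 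Here I would observe that $z+\bar z$ is a sum of roots of unity, hence an algebraic integer, and being rational it must be an ordinary integer; since $0<1/a\le1$ this forces $a=1$, whence $z+\bar z=1$ and $z=\zeta_6^{\pm1}$, $w=\bar z$. This is what separates case (1) from the other two.

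The substantive step, and the one I expect to be the main obstacle, is converting the pointwise data ``$(\omega^{s_1},\omega^{s_2})$ equals $(1,-1)$, $(-1,1)$, or $(\zeta_6,\zeta_6^{-1})$'' into the congruence conditions in the statement. The key device is the order $m$ of $\omega$, which divides $n$. In the case $a=b+1$, for instance, $\omega^{s_1}=1$ gives $m\mid s_1$ while $\omega^{s_2}=-1$ gives $m$ even and $s_2\equiv m/2\pmod m$; hence $m/2$ divides $s_1$, $s_2$ and $n$ simultaneously, so $m/2\mid\gcd(s_1,s_2,n)=1$ and therefore $m=2$, i.e.\ $\omega=-1$. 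Reading off $\omega^{s_1}=1$ and $\omega^{s_2}=-1$ then says exactly that $s_1$ is even, $s_2$ is odd, and (for $-1$ to be an $n$-th root of unity) $n$ is even. The case $b=a+1$ is identical with the roles of $s_1,s_2$ interchanged, and the case $a=b=1$ runs the same way: $\omega^{s_1}=\zeta_6$ and $\omega^{s_2}=\zeta_6^{-1}$ force $\gcd(s_i,m)=m/6$, so $m/6$ divides $s_1,s_2,n$ and thus $m/6\mid\gcd(s_1,s_2,n)=1$, giving $m=6$, $6\mid n$, and $\{s_1,s_2\}\equiv\{1,5\}\pmod6$, i.e.\ $s_2\equiv5s_1\pmod6$. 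I want to emphasize that the reduction to $m\in\{2,6\}$ is precisely where $\gcd(s_1,s_2,n)=1$ is indispensable: dropping it genuinely breaks the proposition (e.g.\ $n=6$, $s_1=2$, $s_2=4$, $a=b=1$ satisfies $s_2\equiv5s_1\pmod6$ yet has nonzero determinant).

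Finally, sufficiency is the easy direction, dispatched by direct substitution of the root already identified. With $n$ even, $P(-1)=1-a(-1)^{s_1}-b(-1)^{s_2}$ evaluates to $0$ exactly under the parities of cases (2) and (3); with $6\mid n$, $P(\zeta_6)=1-\zeta_6^{s_1}-\zeta_6^{s_2}=1-\zeta_6-\zeta_6^{-1}=0$ under the mod-$6$ condition of case (1). Combined with the disjointness of the three cases (forced by $a=b$ versus $a=b+1$ versus $b=a+1$), this yields the ``exactly one'' conclusion.
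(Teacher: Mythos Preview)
Your argument is correct, and its opening move---the eigenvalue product plus the reverse triangle inequality yielding the trichotomy $a=b$, $a=b+1$, $b=a+1$---coincides with the paper's. Where you diverge is in the execution of each case. The paper splits $a\zeta^{ls_1}+b\zeta^{ls_2}=1$ into real and imaginary parts and manipulates sines and cosines directly (invoking, in the $a=b$ case, an implicit Niven-type statement to force $a=1$); you instead read off $(z,w)=(1,-1)$ or $(-1,1)$ from the equality case of the reverse triangle inequality, and in the $a=b$ case use that $z+\bar z=1/a$ is a rational algebraic integer to pin down $a=1$ and $z=\zeta_6^{\pm1}$. Your subsequent step---analysing the order $m$ of $\omega$ and using $\gcd(s_1,s_2,n)=1$ to force $m\in\{2,6\}$---is genuinely different from the paper's angle chase, and has the virtue of making completely explicit where the standing hypothesis $\langle S\rangle=\mathbb{Z}_n$ enters (and why dropping it breaks the statement, as your $(n,s_1,s_2)=(6,2,4)$ example shows). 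The paper's proof never cites that hypothesis, and in particular its converse verification in Case~1 tacitly needs $s_1\equiv\pm1\pmod 6$, which only follows once the gcd condition is invoked; your route avoids this gap.
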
 
\begin{proof}
	Let $\Delta=\det(I_n-A_{C_n(S,w)}^t)$ and $z_l=a\zeta^{ls_1}+b\zeta^{ls_2}$. Since $$\Delta=\prod\limits_{l=0}^{n-1}\left( 1-a\zeta^{ls_1}-b\zeta^{ls_2}\right) $$ Then $\Delta=0$ if and only if $z_l=1$ for some $l$. We observe that $z_0=a+b>1$ and $z_{n-l}=\overline{z_l}$. So we can write 
	
	$$\Delta =
	\left\{
	\begin{array}{ll}
	(1-z_0)\prod\limits_{l=1}^{\frac{n-1}{2}}(1-z_l)(\overline{1-z_l})  & \mbox{if } n\quad\text{is odd} \\
	
	(1-z_0)(1-a(-1)^{s_1}-b(-1)^{s_2})\prod\limits_{l=1}^{\frac{n}{2}-1}(1-z_l)(\overline{1-z_l})  & \mbox{if } n\quad\text{is even}
	\end{array}
	\right.$$
	Hence we can assume $1\leq l\leq [\frac{n}{2}]$, where $[\frac{n}{2}]$ is the integer part of $\frac{n}{2}$. Further, $z_l=1$ implies $\left| a\zeta^{ls_1}+b\zeta^{ls_2}\right| =1$. So $1=\left| a\zeta^{ls_1}+b\zeta^{ls_2}\right|\geq\left||a|-|b| \right|=\left| a-b\right| \geq0$. Since $a,b\in \mathbb{N}$, only possiblities are $a=b, a=b+1,$ or $b=a+1$.\\
	
	\textbf{Case 1:} $a=b$\\
	
	Let $\theta=\frac{2\pi l}{n}$. Then $z_l=1$ if and only if
	$$a\left( \cos s_1\theta + \cos s_2\theta\right)=1\quad\text{and}\quad a\left( \sin s_1\theta+\sin s_2\theta\right) =0.$$
	The second equation implies that $s_1\theta\equiv -s_2\theta \pmod 2\pi$. Substituting back in first equation we get, 
	$$1=a\left( \cos (-s_2\theta)+\cos s_2\theta\right) =2a\cos s_2\theta$$
	$$\Rightarrow\cos s_2\theta=\frac{1}{2a} \Rightarrow
	\frac{2\pi l}{n} s_2=\arccos\left( \frac{1}{2a}\right).$$ 
	Thus $n\in\mathbb{N}$ only if $a=1$. Assuming $a=1$, we have 
	$\arccos\frac{1}{2}=\frac{\pi}{3}$ or $\frac{5\pi}{3}.$
	Substituting back, we see that
	$$\frac{2\pi ls_2}{n}=\frac{\pi}{3}\Rightarrow n=6s_2l,\quad\text{or}\quad\frac{2\pi ls_2}{n}=\frac{5\pi}{3}\Rightarrow 5n=6s_2l.$$
	In either case, $n\equiv0\pmod6$. Also, $s_2\theta\equiv-s_1\theta\pmod 2\pi$ implies that for some integer $m$,
	$$(s_2+s_1)\frac{\pi}{3}=2\pi m\Rightarrow s_2+s_1=6m\quad\text{or}\quad(s_2+s_1)\frac{5\pi}{3}=2\pi m\Rightarrow 5(s_2+s_1)=6m.$$
	In either case, $s_2+s_1\equiv0\pmod6$, or $s_2\equiv5s_1\pmod6$.
	
	Conversely, when $a=1, n\equiv0\pmod6$ and $s_2\equiv5s_1\pmod6$, then letting $l=6$ implies that
	$$z_l=\omega^{ls_1}+\omega^{ls_2}=\left( e^{\frac{2\pi i}{6}}\right)^{s_1}+\left( e^{\frac{2\pi i}{6}}\right)^{-s_1}=1$$
	
	\textbf{Case 2:} $a=b+1$\\
	
	As in case 1, let $\theta=\frac{2\pi l}{n}$. Then $z_l=1$ if and only if 
	$$(b+1)\cos s_1\theta+b\cos s_2\theta=1\quad \text{and}\quad (b+1)\sin s_1\theta+b\sin s_2\theta=0.$$
	The second equation implies that $s_1\theta=\arcsin\left( \frac{-b}{b+1}\sin s_2\theta\right)$. Substituting back in the first equation we get,
	$$(b+1)\cos \left(\arcsin\left( \frac{-b}{b+1}\sin s_2\theta\right) \right) +b\cos s_2\theta=1.$$
	Since $\cos(\arcsin x)=\sqrt{1-x^2}$, we have
	$$(b+1)\sqrt{1-\left( \frac{b}{b+1}\sin s_2\theta\right)^2}+b\cos s_2\theta=1.$$ Hence,
	$$\sqrt{b^2+2b+1-b^2\sin^2s_2\theta}=1-b\cos s_2\theta.$$
	Squaring both sides,
	$$b^2\cos^2s_2\theta+2b+1=b^2\cos^2s_2\theta-2b\cos s_2\theta+1
	\Rightarrow \cos s_2\theta=-1.$$
	Therefore, $s_2\theta\equiv\pi\pmod 2\pi$. Substituting $\theta=\frac{2\pi l}{n}$, we see that $n$ is even. Also, $s_2\theta\equiv\pi\pmod 2\pi$ implies that $(s_2-1)\pi=2\pi m$ for some integer $m$. So, $s_2=2m+1$ or $s_2$ is odd. Also since, $s_1\theta=\arcsin\left( \frac{-b}{b+1}\sin s_2\theta\right)=\arcsin(0)$, $s_1\pi=0$ or $\pi$. $(b+1)\cos s_1\pi-b=1\Rightarrow s_1\pi\equiv0\pmod 2\pi$. Hence $s_1$ is even.
	
	Conversely, let $n,s_1$ be even and $s_2$ be odd then by taking $l=\frac{n}{2}$, we get
	$$z_l=(b+)\omega_l^{s_1}+b\omega_l^{s_2}=(b+1)(-1)^{s_1}+b(-1)^{s_2}=b+1-b=1.$$
	
	\textbf{Case 3:} $b=a+1$
	
	The proof is similar to that of case 2.
	
\end{proof}

\begin{corollary}\label{K0(a,b)=0}
	Assume the hypothesis of Proposition \ref{prop:det(a,b)=0}. Further assume that $L(C_n(S,w))$ is unital purely infinite simple. Then $K_0(L(C_n(S,w)))$ is infinite abelian group if and only if one of the following holds:
	\begin{enumerate}
		\item $a=b=1, n\equiv0\pmod6, s_2\equiv5s_1\pmod6$.
		\item $a=b+1$, $n$ is even, $s_1$ is even, $s_2$ is odd.
		\item $b=a+1$, $n$ is even, $s_1$ is odd, $s_2$ is even.
	\end{enumerate}
In which case $\emph{rank}(K_0(L(C_n(S,w))))=n-\emph{rank}(I_n-A_{C_n(S,w)}).$	
\end{corollary}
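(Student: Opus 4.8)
The plan is to combine Proposition \ref{prop:det(a,b)=0} with Proposition \ref{prop: K_0 of LPA}. Recall that Proposition \ref{prop: K_0 of LPA} asserts that when $L(E)$ is purely infinite simple, $K_0(L(E))$ is infinite if and only if $\det(I_n-A_E^t)=0$, in which case the rank of $K_0(L(E))$ equals the nullity of $I_n-A_E^t$. Since we are assuming that $L(C_n(S,w))$ is unital purely infinite simple, this reduces the claim entirely to a determinant computation together with a rank computation.

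First I would establish the equivalence. The statement $K_0(L(C_n(S,w)))$ is infinite is, by Proposition \ref{prop: K_0 of LPA}, equivalent to $\det(I_n-A_{C_n(S,w)}^t)=0$. But Proposition \ref{prop:det(a,b)=0} already characterizes exactly when this determinant vanishes, namely by the three enumerated conditions. Hence the three conditions listed in the corollary are, verbatim, the three conditions from Proposition \ref{prop:det(a,b)=0}, and the biconditional follows immediately by transitivity.

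Next I would address the rank formula. By Proposition \ref{prop: K_0 of LPA}, in the infinite case we have $\operatorname{rank}(K_0(L(C_n(S,w))))=\operatorname{nullity}(I_n-A_{C_n(S,w)}^t)$. To get the stated formula I would use the rank-nullity theorem over $\mathbb{Q}$: the nullity of the $n\times n$ matrix $I_n-A_{C_n(S,w)}^t$ equals $n-\operatorname{rank}(I_n-A_{C_n(S,w)}^t)$. Finally, since a matrix and its transpose have the same rank, and since $I_n-A_{C_n(S,w)}^t$ and $I_n-A_{C_n(S,w)}$ are transposes of one another (as $I_n$ is symmetric), we have $\operatorname{rank}(I_n-A_{C_n(S,w)}^t)=\operatorname{rank}(I_n-A_{C_n(S,w)})$, which yields exactly $\operatorname{rank}(K_0(L(C_n(S,w))))=n-\operatorname{rank}(I_n-A_{C_n(S,w)})$ as claimed.

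This corollary is essentially a bookkeeping consequence of the two cited propositions, so there is no genuine obstacle; the only point requiring minor care is the transpose identification in the rank formula, ensuring the final expression is written in terms of $I_n-A_{C_n(S,w)}$ rather than its transpose. All the analytic work was already done in proving Proposition \ref{prop:det(a,b)=0}, so the proof is short and formal.
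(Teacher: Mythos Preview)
Your proposal is correct and matches the paper's intended approach: the paper states this result as an immediate corollary without supplying a proof, precisely because it follows at once from combining Proposition~\ref{prop:det(a,b)=0} with Proposition~\ref{prop: K_0 of LPA}, exactly as you outline. Your extra remark justifying the passage from $\operatorname{rank}(I_n-A_{C_n(S,w)}^t)$ to $\operatorname{rank}(I_n-A_{C_n(S,w)})$ via the transpose is a nice clarification that the paper leaves implicit.
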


\subsubsection{\textbf{Subcase} 2.1: $S=\{0,1\}$}\hfill\\

Let $F_n^{(a,b)}$ be the graph with $n$ vertices $v_0,v_1,\dots, v_{n-1}$ and $ak+bk$ edges such that at every vertex $v_l$, there are $a$ loops and $b$ edges getting emitted into $v_{l+1}$ (subscripts are mod $n$).  

\begin{center}
	\begin{tikzpicture}
	[->,>=stealth',shorten >=1pt,thick]

	\draw [fill=black] (0:1.5)		circle [radius=0.08];
	%\tikzset{shift={(0,-1)}}
	\draw [fill=black] (60:1.5)		circle [radius=0.08];
	\draw [fill=black] (120:1.5)		circle [radius=0.08];
	\draw [fill=black] (180:1.5)		circle [radius=0.08];
	\draw [fill=black] (-60:1.5)		circle [radius=0.08];
	\draw [fill=black] (230:1.5)		circle [radius=0.03];
	\draw [fill=black] (240:1.5)		circle [radius=0.03];
	\draw [fill=black] (250:1.5)		circle [radius=0.03];
	
	\path[->]	(110:1.5)	edge[green, bend left=20] 	node	{}		(70:1.5);
	\path[->]	(50:1.5)	edge[green, bend left=20] 	node	{}		(10:1.5);
	\path[->]	(-10:1.5)	edge[green, bend left=20] 	node	{}		(-50:1.5);	
	\path[->]	(-70:1.5)	edge[green, bend left=20] 	node	{}		(260:1.5);
	\path[->]	(220:1.5)	edge[green, bend left=20] 	node	{}		(190:1.5);
	\path[->]	(170:1.5)	edge[green, bend left=20] 	node	{}		(130:1.5);
	
	\path[->] (0:1.6) edge [red, out=40, in=320, looseness=4,  distance=2cm, ->] node {} (0:1.6);
	\path[->] (60:1.6) edge [red, out=100, in=20, looseness=4, distance=2cm, ->] node {} (60:1.6);
	\path[->] (120:1.6) edge [red, out=160, in=80, looseness=4,  distance=2cm, ->] node {} (120:1.6);
	\path[->] (180:1.6) edge [red, out=220, in=140, looseness=4,  distance=2cm, ->] node {} (180:1.6);
	\path[->] (-60:1.6) edge [red, out=-20, in=260, looseness=4,  distance=2cm, ->] node {} (-60:1.6);

	\node at (60:1) {$v_0$};
	\node at (0:1) {$v_1$};
	\node at (-60:1) {$v_2$};
	\node at (120:1) {$v_{n-1}$};
	\node at (180:0.9) {$v_{n-2}$};
	\node at (-3,2) {$F_n^{(a,b)} =$};
	
	\node at (90:1.9) {$(b)$};
	\node at (30:1.9) {$(b)$};
	\node at (150:1.9) {$(b)$};
	\node at (-30:1.9) {$(b)$};
	\node at (-80:1.9) {$(b)$};
	\node at (-150:1.9) {$(b)$};
	
	\node at (0:3.2) {$(a)$};
	\node at (60:3.2) {$(a)$};
	\node at (120:3.2) {$(a)$};
	\node at (180:3.2) {$(a)$};
	\node at (-60:3.2) {$(a)$};

	\end{tikzpicture}
\end{center}
Then $C_n(S,w)\cong F_n^{(a,b)}$ when $S=\{0,1\}$. We note that
$$\text{det}(I_n-A_{F_n^{(a,b)}}^t) =\prod\limits_{l=0}^{n-1}(1-a-b\zeta^l)=(1-a)^n-b^n.$$
\begin{lemma}\label{lemma: det is non-negative when S={0,1}}
	Let $n,a,b\in \mathbb{N}$. Then
	\begin{center}
		$\det(I_n-A_{F_n^{(a,b)}}^t)\geq 0$ if and only if $n$ is even and $a\geq b+1$.
	\end{center}
	Moreover, $\det(I_n-A_{F_n^{(a,b)}}^t)=0$ if and only if $n$ is even and $a=b+1$.
	
\end{lemma}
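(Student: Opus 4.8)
The plan is to start from the explicit closed form recorded immediately before the statement, namely
$\det(I_n-A_{F_n^{(a,b)}}^t)=\prod_{l=0}^{n-1}(1-a-b\zeta^l)=(1-a)^n-b^n$,
and then to read off the sign of $(1-a)^n-b^n$ by splitting on the parity of $n$. Since $a,b\in\mathbb{N}$ we have $a\geq 1$ and $b\geq 1$, so it is convenient to set $c:=a-1\geq 0$ and write $1-a=-c$. Then $(1-a)^n=(-1)^n c^n$, and the determinant equals $(-1)^n c^n-b^n$. Everything after this is elementary, so I expect no genuine obstacle; the only thing needing care is bookkeeping of the sign $(-1)^n$ and the use of $a,b\geq 1$ (rather than merely $\geq 0$).

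First I would dispose of the odd case. When $n$ is odd, $(-1)^n=-1$, so the determinant equals $-(c^n+b^n)$. Because $b\geq 1$ forces $b^n\geq 1>0$ while $c^n\geq 0$, this is strictly negative. Hence for $n$ odd the determinant is never $\geq 0$ and never $0$, which already shows that both the non-negativity and the vanishing can occur only when $n$ is even, matching the parity requirement in the statement.

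Next I would treat the even case. When $n$ is even, $(-1)^n=1$ and the determinant equals $c^n-b^n=(a-1)^n-b^n$. The key (routine) fact is that $t\mapsto t^n$ is strictly increasing on $[0,\infty)$; since $a-1\geq 0$ and $b\geq 0$, this yields $(a-1)^n\geq b^n$ if and only if $a-1\geq b$, i.e.\ $a\geq b+1$, and $(a-1)^n=b^n$ if and only if $a-1=b$, i.e.\ $a=b+1$. Combining the two parity cases then gives exactly the claimed equivalences: $\det(I_n-A_{F_n^{(a,b)}}^t)\geq 0$ iff $n$ is even and $a\geq b+1$, and $\det(I_n-A_{F_n^{(a,b)}}^t)=0$ iff $n$ is even and $a=b+1$.

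As noted, there is essentially no hard step here once the formula $(1-a)^n-b^n$ is in hand: the argument reduces to a parity split together with the monotonicity of the $n$-th power on the non-negative reals. The two points I would be careful about are the sign $(-1)^n$ introduced by $1-a\leq 0$, which is what makes the odd case strictly negative, and the hypothesis $a,b\geq 1$, which guarantees $b^n\geq 1$ and $a-1\geq 0$ and thereby pins down the boundary case $a=b+1$ for the vanishing statement.
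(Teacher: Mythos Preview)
Your proof is correct, and it takes a different route from the paper. The paper specializes the general eigenvalue-pairing argument from Proposition~\ref{prop:det(a,b)=0}: writing $\Delta=\prod_{l}(1-z_l)$ with $z_l=a\zeta^{ls_1}+b\zeta^{ls_2}$, it pairs $z_l$ with $\overline{z_l}=z_{n-l}$ so that each conjugate pair contributes a nonnegative factor, observes $1-z_0=1-a-b<0$, and then for $s_1=0,\ s_2=1$ reads off $1-z_{n/2}=1-a+b$ when $n$ is even. You instead exploit the closed form $(1-a)^n-b^n$ already recorded just before the lemma and reduce everything to a parity split together with monotonicity of $t\mapsto t^n$ on $[0,\infty)$. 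Your argument is more elementary and self-contained for this particular $S=\{0,1\}$ case, since the product collapses explicitly; the paper's approach is the same machinery used uniformly across all the two-element cases (where no such closed form is available), so it is less ad hoc but also less direct here.
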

\begin{proof}
	We refer to the proof of Proposition \ref{prop:det(a,b)=0}. We need to substitute $s_1=0$, and $s_2=1$. Since,
	$$\Delta =
	\left\{
	\begin{array}{ll}
	(1-z_0)\prod\limits_{l=1}^{\frac{n-1}{2}}(1-z_l)(\overline{1-z_l})  & \mbox{if } n\quad\text{is odd} \\
	
	(1-z_0)(1-a(-1)^j-b(-1)^k)\prod\limits_{l=1}^{\frac{n}{2}-1}(1-z_l)(\overline{1-z_l})  & \mbox{if } n\quad\text{is even}
	\end{array}
	\right.$$
	$\Delta\geq 0$ if and only if $n$ is even and $a\geq b+1$, in which case $1-z_{\frac{n}{2}}=1-a+b\leq 0$. Also, it follows that, det$(I_n-A_{F_n^{(a,b)}}^t)=0$ if and only if $n$ is even and $a=b+1$. 
	
\end{proof}

We describe the Smith Normal Form of $I_n-A_{F_n^{(a,b)}}^t$.
\begin{lemma}\label{lemma: SNF when S={0,1}}
	Suppose $n\in\mathbb{N}$. Let $T$ be the $n \times n$ circulant matrix whose first row is $\vec{t}=((1-a), -b, 0, \dots, 0)$.Let $\gcd(1-a,b)=d$. Then the Smith Normal Form
	$$\emph{SNF}(T)=\emph{diag}\left( d,d,\dots, d,\frac{|(1-a)^n-b^n|}{d^{n-1}}\right) $$
\end{lemma}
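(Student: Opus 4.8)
The plan is to apply the Determinant Divisors Theorem (Theorem \ref{DDT}), which reduces the problem to computing the determinant divisors $\alpha_i = \gcd\{\, i\times i \text{ minors of } T \,\}$ for each $i$. The matrix $T$ is the circulant bidiagonal matrix with $(1-a)$ on the diagonal, $-b$ on the superdiagonal, and the single wrap-around entry $-b$ in the bottom-left corner, so by Lemma \ref{lemma: eigenvalue of circulant matrix} its determinant is $\det(T) = \prod_{l=0}^{n-1}\big((1-a) - b\zeta_n^l\big) = (1-a)^n - b^n$, giving $\alpha_n = |(1-a)^n - b^n|$. The heart of the argument is to show $\alpha_i = d^i$ for every $1 \le i \le n-1$; granting this, Theorem \ref{DDT} yields $s_i = \alpha_i/\alpha_{i-1} = d$ for $1 \le i \le n-1$ and $s_n = \alpha_n/\alpha_{n-1} = |(1-a)^n - b^n|/d^{n-1}$, which is exactly the claimed Smith Normal Form.

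To prove $\alpha_i = d^i$ I would establish two divisibilities. For the lower bound, every entry of $T$ lies in $\{1-a, -b, 0\}$ and is therefore divisible by $d = \gcd(1-a,b)$; since each $i \times i$ minor is an integer combination of products of $i$ entries, $d^i$ divides every such minor, whence $d^i \mid \alpha_i$. For the upper bound I would exhibit two specific minors whose values already have gcd $d^i$. Taking rows $\{1, \dots, i\}$ and columns $\{1, \dots, i\}$ gives (for $i \le n-1$, so that the corner entry is avoided) an upper-triangular submatrix with diagonal $1-a$, of determinant $(1-a)^i$; taking rows $\{1, \dots, i\}$ and columns $\{2, \dots, i+1\}$ gives a lower-triangular submatrix with diagonal $-b$, of determinant $(-b)^i$. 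Hence $\alpha_i \mid \gcd\big((1-a)^i, b^i\big)$, and the elementary identity $\gcd(x^i, y^i) = \gcd(x,y)^i$ (proved by writing $x = dx'$, $y = dy'$ with $\gcd(x',y')=1$) gives $\gcd\big((1-a)^i, b^i\big) = d^i$. Combining the two divisibilities forces $\alpha_i = d^i$.

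Finally I would address the possibly singular case. When $(1-a)^n - b^n \ne 0$ all invariant factors are nonzero and Theorem \ref{DDT} applies verbatim. When $(1-a)^n - b^n = 0$ the matrix $T$ is singular, yet the computation $\alpha_{n-1} = d^{n-1} \ne 0$ exhibits a nonzero $(n-1)\times(n-1)$ minor, so $\operatorname{rank}(T) = n-1$ exactly; thus the Smith Normal Form has precisely one zero on the diagonal, the remaining entries being $s_1 = \dots = s_{n-1} = d$, which again matches $\operatorname{diag}(d, \dots, d, 0)$ because $|(1-a)^n - b^n|/d^{n-1} = 0$.

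The main obstacle I anticipate is the upper-bound step: one must verify that the chosen $i \times i$ submatrices are genuinely triangular with the asserted diagonals---in particular that the wrap-around corner entry $-b$ is never included, which is precisely why the restriction $i \le n-1$ is needed---and one must supply the short number-theoretic lemma $\gcd(x^i, y^i) = \gcd(x,y)^i$ that converts the two minor values into the exact gcd $d^i$.
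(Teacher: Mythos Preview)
Your proposal is correct and follows essentially the same approach as the paper: both apply the Determinant Divisors Theorem, compute $\alpha_i=\gcd\big((1-a)^i,b^i\big)=d^i$ for $1\le i\le n-1$, and take $\alpha_n=|\det(T)|=|(1-a)^n-b^n|$. Your write-up is in fact more careful than the paper's: you explicitly exhibit the two triangular $i\times i$ minors that pin down $\alpha_i$, you justify the elementary identity $\gcd(x^i,y^i)=\gcd(x,y)^i$, and you separately treat the singular case $(1-a)^n=b^n$, whereas the paper simply asserts that ``it is easy to observe'' $\alpha_i=d^i$ and tacitly assumes the nonsingular hypothesis of Theorem~\ref{DDT}.
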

\begin{proof}
	In order to compute Smith Normal Form of $T$, we use at the determinant divisors theorem  and look at $i \times i$ minors of $T$ for each $1\leq i\leq n$. Let $\alpha_i$ be the gcd of the set of all $i\times i$ minors of $T$ and $\alpha_0=1$. Then 
	$$\text{SNF}(T)=\text{diag}\left(\frac{\alpha_1}{\alpha_0},\frac{\alpha_2}{\alpha_1},\dots,\frac{|\det(T)|}{\alpha_{n-1}} \right) .$$
	
	By the definition of $T$, it is easy to observe that $\alpha_i=\gcd\left( (a-1)^i,b^i\right)=\gcd(a,b)^i=d^i$ for $1\leq i\leq n-1$ and $|\det(T)|=|(1-a)^n-b^n|$.
	Therefore 
	
	\begin{align*}
	\text{SNF}(T) &= \text{diag}\left(\frac{d}{1},\frac{d^2}{d},\frac{d^3}{d},\dots,\frac{|(1-a)^n-b^n|}{d^{n-1}} \right)\\
	&= \text{diag}\left(d,d,d,\dots,\frac{|(1-a)^n-b^n|}{d^{n-1}} \right)
	\end{align*}
\end{proof} 
\begin{theorem}\label{theorem: S={0,1}}
	Let $n,a,b\in\mathbb{N}$ be fixed. Suppose $S=\{0,1\}\subset\mathbb{Z}_n$, $w:S\rightarrow\mathbb{N}$ be defined as $w(0)=a$ and $w(1)=b$. Let $d=\gcd(a-1,b)$. Then

	$$K_0(L(C_n(S,w)))\cong
	\left\{
	\begin{array}{ll}
	\left(\mathbb{Z}_d\right) ^{n-1} \oplus \mathbb{Z}  & \mbox{if } a=b+1 \quad\text{and}\quad n\quad\text{is even}\\
	\left(\mathbb{Z}_d\right) ^{n-1} \oplus \mathbb{Z}_{\frac{|(1-a)^n-b^n|}{d^{n-1}}}  & \mbox{otherwise} 
	
	\end{array}
	\right.$$ 
\end{theorem}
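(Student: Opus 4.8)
The plan is to assemble the result directly from the machinery already in place, since the genuine computational content lives in Lemma \ref{lemma: SNF when S={0,1}}. First I would record that $C_n(S,w)\cong F_n^{(a,b)}$ and that $L(C_n(S,w))$ is purely infinite simple: because $S=\{0,1\}$ generates $\mathbb{Z}_n$ and $W=a+b\geq 2$, this is immediate from Theorem \ref{theorem: wCayley is pis iff}. That justification is exactly what is needed to invoke Proposition \ref{prop: K_0 of LPA}, which identifies $K_0(L(C_n(S,w)))$ with $\bigoplus_{i=1}^{n}\mathbb{Z}_{s_i}$, where $s_1,\dots,s_n$ are the diagonal entries of the Smith Normal Form of $I_n-A_{C_n(S,w)}^t$.

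Next I would reduce to the circulant matrix $T$ of Lemma \ref{lemma: SNF when S={0,1}}. The Smith Normal Form is invariant under transposition, so $\mathrm{SNF}(I_n-A^t)=\mathrm{SNF}(I_n-A)$; and for $S=\{0,1\}$ the matrix $I_n-A_{F_n^{(a,b)}}$ is precisely the circulant matrix $T$ with first row $((1-a),-b,0,\dots,0)$. Hence Lemma \ref{lemma: SNF when S={0,1}} yields
$$\mathrm{SNF}(I_n-A_{C_n(S,w)}^t)=\mathrm{diag}\left(d,d,\dots,d,\frac{|(1-a)^n-b^n|}{d^{n-1}}\right),$$
where $d=\gcd(a-1,b)$.

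Finally I would split according to whether the last diagonal entry vanishes. By Lemma \ref{lemma: det is non-negative when S={0,1}}, the determinant $(1-a)^n-b^n$ equals $0$ exactly when $n$ is even and $a=b+1$; in that case the final diagonal entry is $0$, and since $\mathbb{Z}_0\cong\mathbb{Z}$, Proposition \ref{prop: K_0 of LPA} gives $K_0\cong(\mathbb{Z}_d)^{n-1}\oplus\mathbb{Z}$. In every other case the last entry is the nonzero integer $\tfrac{|(1-a)^n-b^n|}{d^{n-1}}$, producing $(\mathbb{Z}_d)^{n-1}\oplus\mathbb{Z}_{\frac{|(1-a)^n-b^n|}{d^{n-1}}}$, which is exactly the stated dichotomy.

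I expect essentially no obstacle beyond bookkeeping, as Lemma \ref{lemma: SNF when S={0,1}} already supplies the Smith Normal Form and Lemma \ref{lemma: det is non-negative when S={0,1}} already isolates the singular case. The only points requiring a moment of care are the degenerate subcases: when $a=1$ one has $1-a=0$, forcing $d=\gcd(0,b)=b$ and collapsing the final entry to $\tfrac{b^n}{b^{n-1}}=b=d$, so the \emph{otherwise} formula correctly reads $(\mathbb{Z}_b)^{n}$; and when $d=1$ the summands $\mathbb{Z}_1$ are trivial, so the formula degenerates to a single cyclic (or free) factor. I would verify both of these consistency checks explicitly before concluding.
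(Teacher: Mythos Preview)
Your proposal is correct and follows exactly the route the paper takes: the paper's own proof is the one-line ``Follows from the above lemmas \ref{lemma: det is non-negative when S={0,1}} and \ref{lemma: SNF when S={0,1}}'', and you have simply unpacked that reference by verifying pure infinite simplicity, reading off the Smith Normal Form from Lemma \ref{lemma: SNF when S={0,1}}, and using Lemma \ref{lemma: det is non-negative when S={0,1}} to isolate the degenerate case. Your additional consistency checks ($a=1$, $d=1$) are harmless sanity checks not present in the paper but entirely in its spirit.
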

\begin{proof}
	Follows from the above lemmas \ref{lemma: det is non-negative when S={0,1}} and \ref{lemma: SNF when S={0,1}}.
\end{proof}

\begin{example}
	$L(C_n(0,1))\cong L(1,2)$.	
\end{example}
\begin{proof}
	In Theorem \ref{theorem: S={0,1}} we take $a=b=1$. Then $\gcd(a-1,b)=1$. Hence $\det(I_n-A_{C_n(0,1)}^t)=-1<0$ and  $K_0(L(C_n(0,1)))$ is trivial.
	By Proposition \ref{prop: Leavitt algebra} we have $L(C_n(0,1))\cong L(1,2)$.
\end{proof}
The above example was observed in \cite{MR3456905}, Proposition 3.3.

\subsubsection{\textbf{Subcase 2.2}: $S=\{1,j\}$ with $j>1$}\hfill\\

We note that by Proposition \ref{prop: det is positive} $\det(I_n-A_{C_n(S,w)}^t)>0$ if and only if $n,j$ are even and $b>a+1$. Also by Proposition \ref{prop:det(a,b)=0} $\det(I_n-A_{C_n(S,w)}^t)=0$ if and only if one of the following occurs:
\begin{enumerate}
	\item $a=b=1, n\equiv0\pmod6,j\equiv5\pmod6$
	\item $b=a+1, n, j$ are even.
\end{enumerate}
In order to compute $K_0(L(C_n(S,w)))$, we apply Theorem \ref{Coker} and compute the Smith normal form of $T_{C_n(S,w)}^n-I_n$. This procedure is performed for unweighted Cayley graph in \cite{MR3854343}. However, the record an interesting example here. 

\subsubsection{\textbf{Leavitt Path algebras of Cayley graphs of Dihedral groups}}\hfill\\

Let $\tilde{D}_n$ be the dihedral group of order $2n$. i.e. $\tilde{D}_n=\left\langle r,s\mid r^n=s^2=e,rsr=s\right\rangle$. Let $D_n$ denote the Cayley graph of $\tilde{D}_n$ with respect to the generating subset $S=\{r,s\}$.

The following discussion is taken from \cite{MR2785945}. A graph transformation is called standard if it is one of the following types: in-splitting, in-amalgamation, out-splitting, out-amalgamation, expansion, or contraction. For definitions the reader to referred to \cite{MR3729290}. If $E$ and $F$ are graphs having no sources and no sinks, a flow equivalence from $E$ to $F$ is a sequence $E=E_0\rightarrow E_1\rightarrow\dots\rightarrow E_n=F$ of graphs and standard graph transformations which starts at $E$ and ends at $F$.

\begin{proposition}\cite[Corollary 6.3.13]{MR3729290} \label{prop: flow equivalence}
	Suppose $E$ and $F$ are finite graphs with no sources whose corresponding Leavitt path algebras are purely infinite simple. Then $E$ is flow equivalent to $F$ if and only if $\det(I_{|E|}-A_E)=\det(I_{|F|}-A_F)$ and $\emph{Coker}(I_{|E|}-A_E)\cong\emph{Coker}(I_{|F|}-A_F)$.
\end{proposition}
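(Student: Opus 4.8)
The plan is to prove both implications, reading the statement as Franks' flow-equivalence classification of the underlying shifts of finite type, recast for the graphs $E,F$. Since the Smith Normal Form and the determinant are invariant under transposition, the pair $(\det(I-A_E),\,\mathrm{Coker}(I-A_E))$ may be computed from either $A_E$ or $A_E^t$, so I may freely pass to $A_E^t$ and invoke the cokernel description of Proposition~\ref{prop: K0 is cokernal}; this is what links the invariants appearing here to the earlier machinery.

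For the \emph{necessity} direction I would show that each of the six standard graph transformations preserves both invariants; since a flow equivalence is by definition a finite composition of such moves, invariance of the pair $(\det(I-A),\mathrm{Coker}(I-A))$ then follows by induction on the length of the sequence. The in-/out-splittings and amalgamations are exactly the elementary strong shift equivalences: at the matrix level they present $A_E = RS$ and $A_F = SR$ for non-negative integer rectangular matrices $R,S$. For these I would invoke the standard $\mathbb{Z}$-elementary reduction of $\mathrm{diag}(I-RS,\,I)$ to $\mathrm{diag}(I,\,I-SR)$, which simultaneously gives $\det(I-RS)=\det(I-SR)$ and $\mathrm{Coker}(I-RS)\cong\mathrm{Coker}(I-SR)$. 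The expansion and contraction moves change the matrix size by one; here a direct cofactor expansion along the row and column of the inserted subdivision vertex (of in-degree one and out-degree one) reduces $I_{|E|+1}-A'$ to $I_{|E|}-A$ by a unimodular step, so by Proposition~\ref{prop: SNF gives Coker} and Theorem~\ref{DDT} both the determinant and the Smith Normal Form, hence the cokernel, are unchanged.

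The \emph{sufficiency} direction is Franks' theorem and carries the substantive content. The strategy is to show that any admissible $A$ (satisfying the sink-free, irreducible, Condition~(L) hypotheses guaranteeing pure infinite simplicity) is flow equivalent to a canonical graph depending only on $\mathrm{sgn}\det(I-A)$ and $\mathrm{Coker}(I-A)$; two graphs with equal invariants are then flow equivalent to a common model, hence to each other. Concretely I would reduce $I-A$ toward its Smith Normal Form (Definition~\ref{def: SNF}) using only operations realizable by legal graph moves: strong shift equivalences realize basis changes that keep the matrix a valid non-negative adjacency matrix, while expansions enlarge the state set to create the positive ``room'' needed to perform a prescribed integer row or column operation without introducing negative entries. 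When $\det(I-A)\neq0$ the cokernel is finite of order $|\det(I-A)|$ by Proposition~\ref{prop: K_0 of LPA}, so matching cokernels forces matching $|\det|$, and the extra hypothesis $\det(I-A_E)=\det(I-A_F)$ supplies agreement of the signs; when $\det=0$ both determinants vanish and the free ranks are matched by the cokernel isomorphism.

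The hard part will be the sufficiency direction: the delicate point is that every intermediate matrix in the reduction to normal form must remain a non-negative integer matrix defining an irreducible, sink-free graph, so the integer elementary operations needed to reach the Smith Normal Form cannot be applied naively. Franks' device of padding by expansions to absorb negative entries, together with careful bookkeeping that the irreducibility and Condition~(L) hypotheses persist throughout the reduction, is where essentially all the work concentrates; by contrast the necessity direction and the determinant/cokernel computations are routine.
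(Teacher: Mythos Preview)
The paper does not prove this proposition at all: it is quoted verbatim as \cite[Corollary 6.3.13]{MR3729290} and used as a black box to transfer the classification of $L(C_n^{n-1})$ to $L(D_n)$. There is therefore no ``paper's own proof'' to compare your proposal against.

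That said, your sketch is a reasonable outline of the Franks classification argument that underlies the cited result. The necessity direction (invariance of $(\det,\mathrm{Coker})$ under the six standard moves) is essentially what you describe, and your identification of the sufficiency direction with Franks' theorem, including the key difficulty of keeping all intermediate matrices non-negative and irreducible while performing $\mathbb{Z}$-elementary reductions, is accurate. If you were actually asked to supply a proof here rather than a citation, the honest move would be to cite Franks directly for sufficiency and carry out only the easy invariance checks; attempting to reproduce Franks' reduction in full is well beyond the scope of this paper and is precisely why the authors defer to \cite{MR3729290}.
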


\begin{definition}[In-splitting]
	Let $E=(E^0,E^1,r,s)$ be a directed graph. For each $r^{-1}(v)\neq\phi$, partition the set $r^{-1}(v)$ into disjoint nonempty subsets $\mathcal{E}_1^v,\dots,\mathcal{E}_{m(v)}^v$ where $m(v)\geq1$. If $v$ is a source then set $m(v)=0$. Let $\mathcal{P}$ denote the resulting partition of $E^1$. We form the in-split graph $E_r(\mathcal{P})$ from $E$ using the partition $\mathcal{P}$ as follows:
	\begin{center}
	$E_r(\mathcal{P})^0=\{v_i\mid v\in E^0,1\leq i\leq m(v)\}\cup\{v\mid m(v)=0\}$,\\
	$E_r(\mathcal{P})^1=\{e_j\mid e\in E^1,1\leq j\leq m(s(e))\}\cup\{e\mid m(s(e))=0\},$
	\end{center}
and define $r_{E_r(\mathcal{P})},s_{E_r(\mathcal{P})}:E_r(\mathcal{P})^1\rightarrow E_r(\mathcal{P})^0$ by
\begin{center}
	$s_{E_r(\mathcal{P})}(e_j)=s(e)_j$ and $s_{E_r(\mathcal{P})}(e)=s(e)$\\
	$r_{E_r(\mathcal{P})}(e_j)=r(e)_i$ and $s_{E_r(\mathcal{P})}(e)=s(e)_i$ where $e\in\mathcal{E}_i^{r(e)}.$\\
\end{center}
\end{definition}

We observe that $D_n$ can be obtained from $C_n^{n-1}$ by the standard operation in-splitting with respect to the partition $\mathcal{P}$ of the edge set of $C_n^{n-1}$ that places each edge in its own singleton partition class. In \cite{MR3307385} the collection of Leavitt path algebras $\{L(C_n^{n-1})\mid n\in\mathbb{N}\}$ is completely described and by Proposition \ref{prop: flow equivalence} we have that the same description holds true if we replace $C_n^{n-1}$ with $D_n$ for every $n\in\mathbb{N}$. Hence we have
\begin{theorem}\label{theorem: Dihedral group}
	For each $n\in\mathbb{N}$, $\det(I_n-A_{D_n}^t)\leq0$. And
	\begin{enumerate}
		\item If $n\equiv1~\text{or}~5 \pmod6$ then $K_0(L(D_n))\cong\{0\}$ and $L(D_n)\cong L(1,2)$.
		
		\item If $n\equiv2~\text{or}~4 \pmod6$ then $K_0(L(D_n))\cong\mathbb{Z}/3\mathbb{Z}$ and $L(D_n)\cong M_3(L(1,4))$.
		
		\item If $n\equiv3 \pmod6$ then $K_0(L(D_n))\cong\left(\mathbb{Z}/2\mathbb{Z} \right)^2 $
		
		\item If $n\equiv0 \pmod6$ then $K_0(L(D_n))\cong\mathbb{Z}^2$ and $L(D_n)\cong L(K_3^{(2)})$ 
	\end{enumerate}
\end{theorem}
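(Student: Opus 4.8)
The plan is to reduce everything to the classification of $\{L(C_n^{n-1})\}$ from \cite{MR3307385} via the flow equivalence recorded before the theorem, and then to upgrade the matching invariants to honest algebra isomorphisms through the Algebraic KP Theorem. First, $D_n=\mathsf{Cay}(\tilde{D}_n,\{r,s\})$ has $2n$ vertices, each emitting $W=w(r)+w(s)=2$ edges, and $\langle\{r,s\}\rangle=\tilde{D}_n$, so Theorem \ref{theorem: wCayley is pis iff} makes $L(D_n)$ purely infinite simple. Since $D_n$ is the in-split graph of $C_n^{n-1}$ for the singleton partition, the two graphs are flow equivalent; both are finite, sink-free and source-free with purely infinite simple algebras, so Proposition \ref{prop: flow equivalence} gives $\det(I_{2n}-A_{D_n})=\det(I_n-A_{C_n^{n-1}})$ and $\mathrm{Coker}(I_{2n}-A_{D_n})\cong\mathrm{Coker}(I_n-A_{C_n^{n-1}})$. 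Transposition changes neither invariant, so the same holds for $I-A^t$, and hence $K_0(L(D_n))\cong K_0(L(C_n^{n-1}))$.

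Next I would read off the four residue classes from the circulant matrix $I_n-A_{C_n^{n-1}}^t$, whose representer polynomial $1-x-x^{n-1}$ has eigenvalues $1-2\cos(2\pi l/n)$. Thus $\det(I_n-A_{C_n^{n-1}}^t)=\prod_{l=0}^{n-1}(1-2\cos(2\pi l/n))$; the negative factors pair up under $l\leftrightarrow n-l$ apart from the single factor $-1$ at $l=0$, so their number is odd and the determinant is $\le 0$, vanishing exactly when some $\cos(2\pi l/n)=\tfrac12$, i.e. when $6\mid n$. Together with the cokernel computation of \cite{MR3307385} (equivalently the Section 3 reduction) this produces, by residue of $n$ modulo $6$, the asserted groups: $0$ for $n\equiv1,5$; $\mathbb{Z}/3\mathbb{Z}$ for $n\equiv2,4$; $(\mathbb{Z}/2\mathbb{Z})^2$ for $n\equiv3$; and $\mathbb{Z}^2$ for $n\equiv0$, the last being the only case with $\det=0$. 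This settles $\det(I_{2n}-A_{D_n}^t)\le 0$ and all four $K_0$ statements.

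To obtain the algebra isomorphisms in cases $(1),(2),(4)$ I would invoke Theorem \ref{theorem: KP}. The key simplification is that $W-1=1$, so by Theorem \ref{theorem: order of identity in K_0} the class $[L(D_n)]=\sum_i[v_i]$ is the identity of $K_0(L(D_n))$; the pointed class is therefore trivial, and any $K_0$-isomorphism automatically matches it. In case $(1)$, $K_0=0$ and $\det<0$ agree with $L(1,2)=L(R_2)$ (where $K_0=0$ and $\det=-1$), giving $L(D_n)\cong L(1,2)$. In case $(2)$, $K_0\cong\mathbb{Z}/3\mathbb{Z}$ with trivial identity class and $\det=-3<0$ agree with $M_3(L(1,4))=L(R_4^3)$ (whose $K_0\cong\mathbb{Z}_3$ has $[1]\mapsto 3\equiv 0$ and $\det=-(4-1)=-3$), giving $L(D_n)\cong M_3(L(1,4))$. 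In case $(4)$, $K_0\cong\mathbb{Z}^2$ with $[L(D_n)]$ the identity, so Theorem \ref{theorem: complete graph}(2) (with its $n$ equal to $2$) yields $L(D_n)\cong L(K_3^{(2)})$. Case $(3)$ asserts only the group $(\mathbb{Z}/2\mathbb{Z})^2$, already obtained above.

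The one genuinely nontrivial input is the flow-equivalence claim, namely that $D_n$ really is the in-split graph of $C_n^{n-1}$ and that the Bowen--Franks data transport correctly. I would make this airtight by a direct computation that bypasses the flow invariants altogether: ordering the vertices of $D_n$ along the cosets $\{r^i\}$ and $\{r^is\}$ (and using $sr=r^{-1}s$, so $r^is\mapsto r^{i-1}s$ under right multiplication by $r$) gives $A_{D_n}=\bigl(\begin{smallmatrix}P & I\\ I & P^{-1}\end{smallmatrix}\bigr)$ with $P$ the $n\times n$ cyclic shift. The off-diagonal block $-I$ of $I_{2n}-A_{D_n}^t$ is unimodular, so a block Schur reduction gives $\mathrm{Coker}(I_{2n}-A_{D_n}^t)\cong\mathrm{Coker}\bigl(I-P-P^{-1}\bigr)=\mathrm{Coker}(I_n-A_{C_n^{n-1}}^t)$ and $\det(I_{2n}-A_{D_n}^t)=\det(I-P-P^{-1})=\prod_{l=0}^{n-1}(1-2\cos(2\pi l/n))$. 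This reproduces both invariants of $C_n^{n-1}$ on the nose, confirming the flow equivalence at the level actually needed and rendering the argument self-contained; carrying out this block reduction cleanly over $\mathbb{Z}$ is the only real calculation.
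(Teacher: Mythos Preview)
Your argument follows the paper's route: both observe that $D_n$ is the in-split of $C_n^{n-1}=C_n(1,n-1)$ along the singleton partition, invoke Proposition~\ref{prop: flow equivalence} to transport $\det$ and $\mathrm{Coker}$, and then read off the four cases from the classification in \cite{MR3307385}. The paper's justification is essentially that single sentence; you make explicit several points it leaves tacit---pure infinite simplicity of $L(D_n)$ via Theorem~\ref{theorem: wCayley is pis iff}, the eigenvalue computation showing $\det\le 0$, the fact that $W-1=1$ forces $[L(D_n)]$ to be the identity in $K_0$ (needed for the KP matching), and the case-by-case application of Theorem~\ref{theorem: KP} and Theorem~\ref{theorem: complete graph}(2). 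Your final block-Schur reduction of $I_{2n}-A_{D_n}^t$ to $I_n-P-P^{-1}=I_n-A_{C_n^{n-1}}^t$ is a genuine addition: the paper does not give a direct matrix computation, and this alternative makes the equality of invariants self-contained without relying on the correctness of the in-splitting identification.
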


\subsubsection{\textbf{Subcase 2.3}: $S=\{s_1,s_2\}$ where  $s_1,s_2$ divide $n$, $1<s_1<s_2$ and $\gcd(s_1,s_2)=1$}\label{subsection S={i,j}}\hfill\\

By Proposition \ref{prop:det(a,b)=0} and by Proposition \ref{prop: det is positive}, we have that $\det(I_n-A_{C_n(S,w)}^t)=0$ if and only if one of the following occurs:
\begin{enumerate}
	\item $a=b=1,n\equiv0\pmod6,d_2\equiv5d_1\pmod6.$
	\item $a=b+1$, $n,d_1$ are even, $d_2$ is odd.
	\item $b=a+1$, $n,d_2$ are even, $d_1$ is odd.
\end{enumerate}
and $\det(I_n-A_{C_n(S,w)}^t)>0$ if and only if one of the following occurs:
\begin{enumerate}
	\item $a>b+1$, $n,d_1$ are even, $d_2$ is odd.
	\item $b>a+1$, $n,d_2$ are even, $d_1$ is odd.	
\end{enumerate}
In order to compute $K_0(L(C_n(S,w)))$, we apply Theorem \ref{Coker} and compute the Smith Normal form of $T_{C_n(S,w)}^n-I_n$.

We illustrate this when $S=\{d_1,d_2\},$ where $d_1,d_2$ divides $n$, $\gcd(d_1,d_2)=1$ and $a=b=1$.
In this special case we have $\det(I_n-A_{C_n(d_1,d_2)}^t)=0$ if and only if $n\equiv 0\pmod 6$ and $d_2\equiv5d_1\pmod6$. In all other cases, we have $\det(I_n-A_{C_n^k}^t)<0$. Define $H_{(d_1,d_2)}(n):=|\det(I_n-A_{C_n^k}^t)|$. In order to compute $K_0(L(C_n(d_1,d_2)))$, we apply Theorem \ref{Coker} and compute the Smith normal form of $T_{C_n(d_1,d_2)}^n-I_n$.
	
	For $1\le j,k\in\mathbb{N}$ let us define a sequence $F_{(j,k)}$ recursively as follows:
	\begin{center}
		\[   
		F_{(j,k)}(n) = 
		\begin{cases}
		0 &\quad\text{if}\quad 1\leq n\leq k-2\\
		1 &\quad\text{if}\quad n=k-1\\
		0 &\quad\text{if}\quad n=k\\
		F_{(j,k)}(n-j)+F_{(j,k)}(n-k) &\quad\text{if}\quad n\ge k+1\\
		\end{cases}
		\]
	\end{center}

	In $M_{C_n(d_1,d_2)}^*$, we have
	\begin{align*}
	[v_0] 	&= [v_j]+[v_k] \\
	&= [v_{2j}]+[v_k]+[v_{j+k}] \\
	&= [v_{3j}]+[v_k]+[v_{j+k}]+[v_{2j+k}] \\
	&= \dots
	\end{align*}
	The coefficients appearing in the above equations are terms in the sequence $F_{(d_1,d_2)}$ and corresponding $T_{C_n(d_1,d_2)}$ is given by the following 
	
	\begin{lemma}\label{lemma: T^n in terms of linear recurrence relation when S={i,j}}
		For fixed $d_1,d_2$, let $d_2-d_1=k$. Let $T=T_{C_n(d_1,d_2)}$. Suppose $G(n):=F_{(d_1,d_2)}(n)$ be the sequence defined above. Then for each $n\in\mathbb{N}$,
		\begin{equation*}
		T^{n}=\left(\begin{array}{cccc}
		
		G(n-1) & G(n) & \dots &  G(n+d_2-2) \\
		G(n-2) & G(n-1) & \dots &  G(n+d_2-3) \\
		\vdots & \vdots & \dots &  \vdots  \\
		\rowcolor{blue!20} 
		G(n+d_1-1) & G(n+d_1) & \dots &  G(n+d_2+d_1-2) \\
		\vdots & \vdots & \dots &  \vdots  \\
		
		G(n) & G(n+1) & \dots &  G(n+d_2-1) \\
		
		\end{array}\right)
		\end{equation*}
		where the highlighted row is $(k+1)^{th}$ row. 	
	\end{lemma}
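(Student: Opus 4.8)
The plan is to prove the identity by induction on the exponent, using that left multiplication by the companion matrix acts as a one-step shift on the rows of each power. First I would write $T=T_{C_n(d_1,d_2)}$ down explicitly: since $a=b=1$ the associated characteristic polynomial is $p(S,w,t)=t^{d_2}-t^{d_2-d_1}-1$, so $T$ is the $d_2\times d_2$ matrix with $1$'s on the subdiagonal whose last column has a $1$ in rows $1$ and $k+1$ (here $k=d_2-d_1$) and $0$ elsewhere; in the companion-matrix notation this means $c_0=c_k=1$ and $c_i=0$ otherwise. Writing $G=F_{(d_1,d_2)}$, I would recast the assertion as the single closed form $(T^n)_{ij}=G(n+j-1-i)$ for $1\le i\le k$ and $(T^n)_{ij}=G(n+j-1-i+d_2)$ for $k+1\le i\le d_2$, and then check that this is exactly the displayed matrix (the first entries of rows $1$, $k+1$, $d_2$ being $G(n-1)$, $G(n+d_1-1)$, $G(n)$).

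Because the entries occurring for small $n$ have arguments that dip below $1$, I would next extend $G$ to all of $\mathbb{Z}$ by imposing its defining recurrence $G(m)=G(m-d_1)+G(m-d_2)$ (equivalently $G(m-d_2)=G(m)-G(m-d_1)$) for every integer $m$. This downward extension is forced and is compatible with the initial data of $F_{(d_1,d_2)}$; in particular it yields $G(0)=0$ and $G(-1)=1$.

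The core of the argument is the row recursion obtained from $T^{n+1}=T\,T^n$. Reading off the rows of $T$ and using $c_0=c_k=1$, I would show that row $1$ of $T^{n+1}$ equals row $d_2$ of $T^n$, that row $i$ of $T^{n+1}$ equals row $i-1$ of $T^n$ for all $i$ with $2\le i\le d_2$ and $i\neq k+1$, and that row $k+1$ of $T^{n+1}$ equals the sum of rows $k$ and $d_2$ of $T^n$. Substituting the inductive closed form, every row except the $(k+1)$-st matches after a purely formal reindexing of the argument of $G$; the only row carrying content is the $(k+1)$-st, where equality of the two sides is exactly the relation $G(q)=G(q-d_1)+G(q-d_2)$ at $q=n+j-1+d_1$, i.e. the defining recurrence, now valid for all such $q$ thanks to the extension above.

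Finally I would dispose of the base case by a direct entrywise check that the closed form reproduces $T^0=I$ (or, if $\mathbb{N}$ excludes $0$, that it reproduces $T^1=T$). I expect this initial-condition bookkeeping to be the main obstacle: the inductive step is mechanical, but one must verify that the particular normalization of $F_{(d_1,d_2)}$ — the single $1$ placed off-center at position $d_2-1$, with $G$ vanishing on $1,\dots,d_2-2$ and at $d_2$ — is precisely what forces the base matrix to be the identity. Concretely this requires confirming that the downward extension of $G$ vanishes on the block of non-positive indices down to $1-k$ except at $-1$, where $G(-1)=1$; these are exactly the values read off from the first and last columns of the base matrix, so the check closes the induction.
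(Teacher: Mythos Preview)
Your proposal is correct and is essentially the same argument as the paper's: induction on the exponent, with $G$ extended to all of $\mathbb{Z}$ via its recurrence so that the base case and the single nontrivial row can be checked. The only difference is that the paper multiplies on the right, computing $T^{n}=T^{n-1}T$ and tracking how columns shift (with the last column produced by the recurrence), whereas you multiply on the left, $T^{n+1}=T\,T^n$, and track how rows shift (with row $k+1$ produced by the recurrence); these are dual versions of the same computation. Your explicit closed form $(T^n)_{ij}=G(n+j-1-i)$ for $i\le k$ and $(T^n)_{ij}=G(n+j-1-i+d_2)$ for $i\ge k+1$ is a clean way to package the induction hypothesis and makes the reindexing in the shift rows entirely transparent.
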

	\begin{proof}
		We prove the lemma by induction on $n$. We extend the definition of $G$ to the negative integers as well. 
		Then, 
		\begin{equation*}
		T=\left(\begin{array}{ccc>{\columncolor{blue!20}}cccc}
		
		0 & 0 & \dots & 0 & \dots & 0 & 1 \\
		1 & 0 & \dots & 0 & \dots & 0 & 0 \\
		\vdots & \vdots & & \vdots & & \vdots & \vdots  \\
		\rowcolor{blue!20} 
		0 & 0 & \dots & 1 & \dots &  0 & 1 \\
		\vdots & \vdots &  & \vdots & &\vdots &\vdots \\
		0 & 0 & \dots & 0 & \dots & 0 &  0 \\
		0 & 0 & \dots & 0 & \dots & 1 &  0 \\
		
		\end{array}\right)
		\end{equation*}
		
		\begin{equation*}
		=\left(\begin{array}{ccc>{\columncolor{blue!20}}cccc}
		
		G(0) & G(1) & \dots & G(k-1) & \dots & G(d_2-2) & G(d_2-1) \\
		G(-1) & G(0) & \dots & G(k-2) & \dots & G(d_2-3) & G(d_2-2) \\
		\vdots & \vdots & & \vdots & & \vdots & \vdots  \\
		\rowcolor{blue!20} 
		G(d_1) & G(d_1+1) & \dots & G(d_2-1) & \dots &  G(d_2+d_1-2) & G(d_2+d_1-1) \\
		\vdots & \vdots &  & \vdots & &\vdots &\vdots \\
		G(2) & G(3) & \dots & G(k+1) & \dots & G(d_2) &  G(d_2+1) \\
		G(1) & G(2) & \dots & G(k) & \dots & G(d_2-1) &  G(d_2) \\
		
		\end{array}\right)
		\end{equation*}
		where highlighted column is $k^{th}$ column.

		Thus we have the statement true for $n=1$.
		Now suppose
		\begin{equation*}
		T^{n-1}=\left(\begin{array}{cccc}
		
		G(n-2) & G(n-1) & \dots &  G(n+d_2-3) \\
		G(n-3) & G(n-2) & \dots &  G(n+d_2-4) \\
		\vdots & \vdots & \dots &  \vdots  \\
		\rowcolor{blue!20} 
		G(n+d_1-2) & G(n+d_1-1) & \dots &  G(n+d_2+d_1-3) \\
		\vdots & \vdots & \dots &  \vdots  \\
		
		G(n-1) & G(n) & \dots &  G(n+d_2-2) \\
		
		\end{array}\right)
		\end{equation*}
		Then,
		\[T^{n}=T^{n-1}T\]
		\begin{equation*}
		=\left(\begin{array}{cc>{\columncolor{blue!20}}cccc}
		
		G(n-2) &  \dots & G(n+k-2) &\dots& G(n+d_2-3) \\
		G(n-3) &  \dots & G(n+k-3) &\dots& G(n+d_2-4) \\
		\vdots &  & \vdots & &\vdots \\
		\rowcolor{blue!20} 
		G(n+d_1-2) &  \dots & G(n+d_2-2) &\dots& G(n+d_2+d_1-3) \\
		\vdots &  & \vdots & &\vdots \\
		
		G(n-1) &  \dots & G(n+k-2) &\dots&  G(n+d_2-2) \\
		
		\end{array}\right)
		\left(\begin{array}{cccc}
		
		0 & 0 & \dots & 1 \\
		1 & 0 & \dots & 0 \\
		\vdots & \vdots & \dots & \vdots  \\
		\rowcolor{blue!20} 
		0 & 0 & \dots & 1 \\
		\vdots & \vdots & \dots & \vdots  \\
		0 & 0 & \dots &  0 \\
		
		\end{array}\right)
		\end{equation*}	
		\begin{equation*}
		=\left(\begin{array}{cccc}
		
		G(n-1) & G(n) & \dots &  G(n-2)+G(n+k-2) \\
		G(n-2) & G(n-1) & \dots & G(n-3)+G(n+k-3) \\
		\vdots & \vdots & \dots &  \vdots  \\
		\rowcolor{blue!20} 
		G(n+d_1-1) & G(n+d_1) & \dots &  G(n+d_1-2)+G(n+d_2-2) \\
		\vdots & \vdots & \dots &  \vdots  \\
		
		G(n) & G(n+1) & \dots &  G(n-1)+G(n+k-2) \\
		
		\end{array}\right)
		\end{equation*}
		\begin{equation*}
		=\left(\begin{array}{cccc}
		
		G(n-1) & G(n) & \dots &  G(n+d_2-2) \\
		G(n-2) & G(n-1) & \dots &  G(n+d_2-3) \\
		\vdots & \vdots & \dots &  \vdots  \\
		\rowcolor{blue!20} 
		G(n+d_1-1) & G(n+d_1) & \dots &  G(n+d_2+d_1-2) \\
		\vdots & \vdots & \dots &  \vdots  \\
		
		G(n) & G(n+1) & \dots &  G(n+d_2-1) \\
		
		\end{array}\right)
		\end{equation*}
		
	\end{proof}
	
	Using the determinant divisors theorem, the Smith normal form of $T_{C_n(d_1,d_2)}^n-I_k$ can be reduced to
	\[SNF(T_{C_n(d_1,d_2)}^n-I_k)=\begin{pmatrix}
	\alpha_1(n) & & & & \\
	& \frac{\alpha_2(n)}{\alpha_1(n)} & & & \\
	& & \ddots & & \\
	& & &\frac{\alpha_{d_2}(n)}{\alpha_{d_2-1}(n)} & \\
	\end{pmatrix}\] 
	where $\alpha_i$ is the greatest common divisor of the set of all $i\times i$ minors of $T_{C_n(d_1,d_2)}^n$.
	
\begin{example}\label{eg: n=6, S={2,3}}
	Let $n=6, d_1=2, d_2=3$. The corresponding Cayley graph is
		\begin{center}
			\begin{tikzpicture}
			[->,>=stealth',shorten >=1pt,thick,scale=0.7]

			\draw [fill=black] (0:2)		circle [radius=0.1];		
			\draw [fill=black] (60:2)		circle [radius=0.1];		
			\draw [fill=black] (120:2)		circle [radius=0.1];	
			\draw [fill=black] (180:2)		circle [radius=0.1];	
			\draw [fill=black] (240:2)		circle [radius=0.1];		
			\draw [fill=black] (300:2)		circle [radius=0.1];

			\path[->]	(0:2)	edge[blue, bend left=10] 	node	{}		(180:2);
			\path[->]	(60:2)	edge[blue, bend left=10] 	node	{}		(240:2);
			\path[->]	(120:2)	edge[blue, bend left=10] 	node	{}		(300:2);
			\path[->]	(180:2)	edge[blue, bend left=10] 	node	{}		(0:2);
			\path[->]	(240:2)	edge[blue, bend left=10] 	node	{}		(60:2);
			\path[->]	(300:2)	edge[blue, bend left=10] 	node	{}		(120:2);
			
			\path[->]	(60:2)	edge[green, bend left=20] 	node	{}		(300:2);
			\path[->]	(300:2)	edge[green, bend left=20] 	node	{}		(180:2);
			\path[->]	(180:2)	edge[green, bend left=20] 	node	{}		(60:2);
			\path[->]	(0:2)	edge[green, bend left=20] 	node	{}		(240:2);
			\path[->]	(240:2)	edge[green, bend left=20] 	node	{}		(120:2);
			\path[->]	(120:2)	edge[green, bend left=20] 	node	{}		(0:2);

			\node at (60:2.5) {$v_0$};
			\node at (0:2.5) {$v_1$};
			\node at (300:2.5) {$v_2$};
			\node at (240:2.5) {$v_3$};
			\node at (180:2.5) {$v_4$};
			\node at (120:2.5) {$v_5$};

			\node at (-3,1) {$C_6(2,3) =$};	
			
			\end{tikzpicture}
		\end{center}
		
		Corresponding companion matrix is given by
		\[T=\begin{pmatrix}
		0 & 0 & 1 \\
		1 & 0 & 1 \\
		0 & 1 & 0 \\
		
		\end{pmatrix}\]
		and,
		\[T^6-I_3=\begin{pmatrix}
		0 & 1 & 2 \\
		2 & 1 & 3 \\
		1 & 2 & 1 \\
		
		\end{pmatrix}\]
		whose Smith normal form is given by
		\[SNF(T^6-I_3)=\begin{pmatrix}
		1 & 0 & 0 \\
		0 & 1 & 0 \\
		0 & 0 & 7 \\
		
		\end{pmatrix}\]
		Hence,
		$K_0(L(C_6(2,3)))\cong\mathbb{Z}_7$ and $L(C_6(2,3))\cong L(1,8)$.

\end{example}

\section*{Acknowledgment}
	The author would like to thank B.Sury for fruitful discussions during the preparation of this paper. The author is grateful to Aditya Challa for his valuable help with  Python Programming Language. The author sincerely thanks Ramesh Sreekantan,  Roozbeh Hazrat, Gene Abrams, and Crist\'{o}bal Gil Canto for their very useful comments towards improving the paper. 
	
	The author gratefully acknowledges Department of Atomic Energy (National Board for Higher Mathematics), Government Of India for their financial support through Ph.D. Scholarship.

\end{document}